\documentclass[a4paper,11pt]{article}
\usepackage[latin1]{inputenc}
\usepackage[english]{babel}
\usepackage{amsmath}
\usepackage{amsfonts}
\usepackage{amssymb}
\usepackage{epsfig}
\usepackage{amsopn}
\usepackage{amsthm}
\usepackage{color}
\usepackage{graphicx}
\usepackage{subfigure}
\usepackage{enumerate}
\usepackage{tikz}
\usetikzlibrary{intersections}
\newtheorem{theorem}{Theorem}[section]

\newtheorem{lemma}[theorem]{Lemma}
\newtheorem{proposition}[theorem]{Proposition}
\newtheorem{definition}[theorem]{Definition}

\newtheorem*{theorem*}{Theorem}
\newtheorem*{lemma*}{Lemma}
\newtheorem*{remark*}{Remark}
\newtheorem*{definition*}{Definition}
\newtheorem*{proposition*}{Proposition}
\newtheorem*{corollary*}{Corollary}
\numberwithin{equation}{section}
\newcommand{\real}{\mathbb{R}}

\let\ced=\c         

\def\x{\xi}

\def\qed{\,\unskip\kern 6pt \penalty 500
\raise -2pt\hbox{\vrule \vbox to8pt{\hrule width 6pt
\vfill\hrule}\vrule}\par}
\definecolor{darkblue}{rgb}{0.05, .05, .65}
\definecolor{darkgreen}{rgb}{0.1, .65, .1}
\definecolor{darkred}{rgb}{0.8,0,0}
\newcommand{\beqn}{\begin{equation}}
\newcommand{\eeqn}{\end{equation}}
\newcommand{\bear}{\begin{eqnarray}}
\newcommand{\eear}{\end{eqnarray}}
\newcommand{\bean}{\begin{eqnarray*}}
\newcommand{\eean}{\end{eqnarray*}}
\begin{document}

\title{\huge \bf Boundedness and evolution rates for a quasilinear reaction-diffusion equation}

\author{
\Large Razvan Gabriel Iagar\,\footnote{Departamento de Matem\'{a}tica
Aplicada, Ciencia e Ingenieria de Materiales y Tecnologia
Electr\'onica, Universidad Rey Juan Carlos, M\'{o}stoles,
28933, Madrid, Spain, \textit{e-mail:} razvan.iagar@urjc.es},\\
[4pt] \Large Marta Latorre\,\footnote{Departamento de Matem\'{a}tica
Aplicada, Ciencia e Ingenieria de Materiales y Tecnologia
Electr\'onica, Universidad Rey Juan Carlos, M\'{o}stoles,
28933, Madrid, Spain, \textit{e-mail:} marta.latorre@urjc.es},
\\[4pt] \Large Ariel S\'{a}nchez\footnote{Departamento de Matem\'{a}tica
Aplicada, Ciencia e Ingenieria de Materiales y Tecnologia
Electr\'onica, Universidad Rey Juan Carlos, M\'{o}stoles,
28933, Madrid, Spain, \textit{e-mail:} ariel.sanchez@urjc.es}\\
[4pt] }
\date{}
\maketitle

\begin{abstract}
We consider the following quasilinear reaction-diffusion equation
$$
\partial_tu=\Delta u^m+(1+|x|)^{\sigma}u^p, \quad (x,t)\in\real^N\times(0,\infty),
$$
in dimension $N\geq3$ and in the range of exponents $1<p<m$ and $-\infty<\sigma<-2$. We prove that, for initial conditions $u_0$ satisfying
$$
u_0\geq0, \quad u_0\not\equiv0, \quad \lim\limits_{|x|\to\infty}|x|^{-(\sigma+2)/(m-p)}u_0(x)=0,
$$
the solution $u$ to the corresponding Cauchy problem remains uniformly bounded from above and below:
$$
C_1\leq \|u(t)\|_{\infty}\leq C_2, \quad t\in(0,\infty),
$$
for some positive constants $C_1$ and $C_2$. Under suitable conditions on $p$, we also establish the rate of expansion of the upper limit $R(t)$ of the positivity set for compactly supported data, that is,
$$
At^{\beta}\leq R(t)\leq Bt^{\beta}, \quad \beta=-\frac{m-p}{\sigma(m-1)+2(p-1)},
$$
and a \emph{different time scale in outer sets}, that is
$$
D_1t^{-\alpha}\leq u(x,t)\leq D_2t^{-\alpha}, \quad \alpha=\frac{\sigma+2}{\sigma(m-1)+2(p-1)}, \quad {\rm if} \ |x|\geq Ct^{\beta}.
$$
The boundedness is in striking contrast with the property of grow-up as $t\to\infty$ established in previous works by the authors for $\sigma>-2$, illustrating the character of threshold of the exponent $\sigma=-2$.
\end{abstract}

\

\noindent {\bf Mathematics Subject Classification 2020:} 35B33, 35B40, 35K57, 35K59.

\smallskip

\noindent {\bf Keywords and phrases:} reaction-diffusion equations, global solutions, large time behavior, uniform boundedness, rates.

\section{Introduction}

This paper addresses the question of understanding the dynamics of the Cauchy problem associated to the quasilinear reaction-diffusion equation
\begin{equation}\label{eq1}
\partial_tu=\Delta u^m+(1+|x|)^{\sigma}u^p, \quad (x,t)\in\real^N\times(0,\infty),
\end{equation}
with
\begin{equation}\label{ic}
u(x,0)=u_0(x)\geq0, \quad x\in\real^N,
\end{equation}
in the range of exponents and parameters
\begin{equation}\label{range.exp}
m>1, \quad p\in(1,m), \quad \sigma\in(-\infty,-2), \quad N\geq3.
\end{equation}
The initial condition is assumed to satisfy, in the more general case, the conditions
\begin{equation}\label{icond}
u_0\in C(\real^N), \quad u_0\geq0, \quad u_0\not\equiv0, \quad \lim\limits_{|x|\to\infty}|x|^{-(\sigma+2)/(m-p)}u_0(x)=0.
\end{equation}
The subclass of compactly supported initial conditions also plays an important role in the paper. The restriction on the dimension $N\geq3$ is due to the fact that precedents in the literature have shown that dimensions $N=1$ and $N=2$ are very different with respect to the properties of Eq. \eqref{eq1} and related equations.

The aim of this study is, besides advancing in the understanding of the behavior of solutions to Eq. \eqref{eq1}, to illustrate the contrast between the grow-up regime as $t\to\infty$ established in the range $\sigma>-2$ by the authors in their previous work \cite{ILS25} and the uniform boundedness (also with respect to time) of the solutions in the range of exponents \eqref{range.exp}.

The basic theory of the Cauchy problem \eqref{eq1}-\eqref{ic} can be found in \cite{AdB91}, including optimal conditions for its well-posedness. In particular, it is easy to observe that the conditions \eqref{icond} are included in the ones ensuring well-posedness, as will be discussed together with our main results.

Eq. \eqref{eq1} is a reaction-diffusion equation whose reaction term features a weight depending on the spatial variable. The dynamical properties of such equations depend on the competition between the two terms in the right-hand side: on the one hand, a diffusion term that spreads the total mass (or $L^1$ norm, if the solution is integrable) from regions near the origin to outer regions and, on the other hand, a source term (or reaction) that produces at every $t>0$ an increase of the $L^1$ norm of the solution. It is well-established in literature that the presence of this source term might produce the formation of singularities or regions where the solution becomes unbounded, a mathematical phenomenon known as finite time blow-up if it occurs as $t\to T$ for some $T\in(0,\infty)$, or grow-up if the solution is unbounded as $t\to\infty$ (but it remains bounded at every $t\in(0,\infty)$).

Indeed, for the standard (autonomous) reaction-diffusion equation
$$
u_t=\Delta u^m+u^p,
$$
with $1<p<m$, which corresponds to taking $\sigma=0$ in \eqref{eq1}, it is known that any nontrivial solution blows up in a finite time (see for example \cite[Section 3.1, Chapter IV]{S4}). The authors extended in \cite{ILS24} this property of universal finite time blow-up to the following weighted reaction-diffusion equation
\begin{equation}\label{eq0}
u_t=\Delta u^m+|x|^{\sigma}u^p
\end{equation}
in the range of exponents $1<p<m$ and $\sigma>0$, featuring a weight which is unbounded as $|x|\to\infty$. More recently, the blow-up rates for \eqref{eq0} have also been established in \cite{FIS26} when $\sigma>0$ and correspond to the self-similarity exponents of the equation.

The dynamical properties of Eq. \eqref{eq0} (and also Eq. \eqref{eq1}) strongly depart from those described in the previous paragraph if $\sigma<0$ is considered. Indeed, it has been shown in \cite{IL26} (the exponent being actually discovered in \cite{IMS23}) that, if $\max\{-2,-N\}<\sigma<0$, there exists a critical reaction exponent
$$
p_G:=1-\frac{\sigma(m-1)}{2}\in(1,m),
$$
such that

$\bullet$ if $1<p\leq p_G$, any nontrivial solution to Eq. \eqref{eq0} with a suitable initial condition (for example compactly supported) presents grow-up as $t\to\infty$, with a power-like grow-up rate for $p\in(1,p_G)$ and an exponential grow-up rate if $p=p_G$.

$\bullet$ if $p>p_G$, any nontrivial solution to Eq. \eqref{eq0} blows up in a finite time, similarly as in the case when $\sigma\geq0$, with the self-similar blow-up rate.

Following \cite{AdB91}, the authors have considered, in the same range of exponents
$$
\max\{-2,-N\}<\sigma<0, \quad p\in(1,p_G),
$$
Eq. \eqref{eq1} instead of Eq. \eqref{eq0} in their recent work \cite{ILS25}. The main difference between the two equations is that, while the weight $|x|^{\sigma}$ is singular at $x=0$ when $\sigma<0$, the new weight $(1+|x|)^{\sigma}$ is bounded in $\real^N$ and the well-posedness results in \cite{AdB91} apply to it for any value of $\sigma<0$. Thus, we have addressed in \cite{ILS25} the question of the large time behavior of solutions and proved that, as $t\to\infty$, an interesting asymptotic simplification takes place and the solutions to the Cauchy problem \eqref{eq1}-\eqref{ic} converge to the same asymptotic profile as the solutions to the analogous Cauchy problem for \eqref{eq0}, provided that the initial conditions are compactly supported.

However, the large time behavior of solutions to the Cauchy problem \eqref{eq1}-\eqref{ic} seems to strongly depart from that of Eq. \eqref{eq0} if $\sigma\leq-2$, as it is very likely (although not established in a rigorous manner up to our knowledge) that Eq. \eqref{eq0} has no bounded solutions at all for $\sigma\leq-2$. In particular, it has been shown in \cite{IS23} that, at the level of self-similar solutions to Eq. \eqref{eq0}, for $\sigma=-2$ all such solutions are unbounded and in particular present a vertical asymptote at $x=0$ (and, as we shall see in this paper, this result extends to the range $\sigma<-2$). Thus, a natural question is raised in view of the analogous large time behavior between Eqs. \eqref{eq1} and \eqref{eq0} established in \cite{ILS25}: is this similarity true also when $\sigma<-2$ or does it depart from it? The answer is that the solutions to Eq. \eqref{eq1} are uniformly bounded for $\sigma<-2$ and thus strongly depart from those of Eq. \eqref{eq0}, as well as from the properties shared by the same Eq. \eqref{eq1} when $\sigma>-2$, and the goal of this paper is to prove this fact.

In order to close this discussion of precedents, we recall that a number of works \cite{FdPV06, FdP18, FdP20, FdP22, KWZ11, Liang12} addressed the question of large time behavior (in the sense of establishing ranges for blow-up, grow-up or boundedness) for the Cauchy problem to the following equation
\begin{equation}\label{eq2}
u_t=\Delta u^m+a(x)u^p,
\end{equation}
where $a(x)$ is a compactly supported function (taking as model a characteristic function of an interval or a ball). It has been noticed therein that the dimensions $N=1$ and $N=2$ are special for Eq. \eqref{eq2}, since, even in the range $1<p<m$, grow-up as $t\to\infty$ occurs in dimension $N=2$ (see \cite{FdP18}), while in dimension $N=1$ even finite time blow-up may occur, and the critical exponent $p_0=(m+1)/2$ serves as a threshold between the range where grow-up and finite time blow-up occur (see \cite{FdPV06, FdP18}). Since solutions to Eq. \eqref{eq2} play, after a suitable rescaling, the role of subsolutions to Eq. \eqref{eq1}, it is easy to notice that the latter behavior is also enjoyed by solutions to the Cauchy problem \eqref{eq1}-\eqref{ic}, but establishing rates of grow-up or blow-up is a difficult problem. Thus, we have decided to avoid in this paper these very different low dimensions $N=1$ and $N=2$ and consider them in a further separate work: in these dimensions the results strongly depart from those in dimension $N\geq3$ and new critical exponents have to be considered (at least in dimension $N=1$).

We are thus ready to present our main results.

\medskip

\noindent \textbf{Main results.} Assume that $m$, $p$ and $\sigma$ verify \eqref{range.exp}. Before stating our main results, let us introduce the concept of solution that we employ throughout the paper, following \cite{AdB91}.
\begin{definition}\label{def.weak}
We say that $u:\real^N\times(0,T)\longrightarrow[0,\infty)$ is a weak solution to the Cauchy problem \eqref{eq1}-\eqref{ic} if for every bounded open set $\Omega\subset\real^N$ with smooth boundary the following properties are fulfilled:
\begin{itemize}
  \item \textbf{Regularity:} $u\in C_{\rm loc}(0,T;L^1_{\rm loc}(\real^N))\cap L^{\infty}_{\rm loc}(\Omega\times(0,T))$ and $u^m\in L^2_{\rm loc}(0,T;H^{1}_{\rm loc}(\real^N))$.
  \item \textbf{Weak formulation:} For any function $\eta\in H_0^1(\Omega\times(0,T))$ and for any $t\in(0,T)$ we have
\begin{equation}\label{weaksol}
\begin{split}
\int_{\Omega}u(x,t)\eta(x,t)\,dx&+\int_0^t\int_{\Omega}\left(\nabla u^m\cdot\nabla\eta-u\partial_{t}\eta\right)\,dx\,d\tau\\
&=\int_0^t\int_{\Omega}(1+|x|)^{\sigma}u^p\eta\,dx\,d\tau.
\end{split}
\end{equation}
  \item \textbf{Initial trace:} For any $\eta\in C_0^{\infty}(\real^N)$ we have
$$
\lim\limits_{t\to0}\int_{\real^N}u(x,t)\eta(x)\,dx=\int_{\real^N}u_0(x)\eta(x)\,dx.
$$
\end{itemize}
\end{definition}
In this formulation, and taking into account the range of exponents \eqref{range.exp}, local well-posedness of weak solutions to the Cauchy problem \eqref{eq1}-\eqref{ic} is established in \cite{AdB91} under the assumption that $u_0\in L^1_{\rm loc}(\real^N)$. Indeed, assuming that
\begin{equation*}
[[u_0]]_1:=\sup\limits_{r\geq1}\frac{1}{r^{2/(m-1)}|B(0,r)|}\int_{B(0,r)}|u_0(y)|\,dy<\infty,
\end{equation*}
a fact that is obviously satisfied by any bounded initial condition $u_0$, and taking into account that $p<m$, existence of a weak solution at least on an interval $(0,T)$ is ensured by \cite[Theorem 3.1, p. 371]{AdB91}, while the uniqueness and the comparison principle follow from \cite[Theorem  5.1]{AdB91} and \cite[Proposition 2.2]{Su02}.

Once the well-posedness is settled, let us go to our main results. We assume from now on that $u_0$ satisfies \eqref{icond}, unless something else is specified. In all the following statements, $u$ designs the unique solution to the Cauchy problem \eqref{eq1}-\eqref{ic}.

The first result is a boundedness one and is, in our opinion and by comparison with the range $\sigma>-2$, the most striking one in the paper.
\begin{theorem}\label{th.bound}
Let $m$, $p$, $\sigma$ and $N$ be as in \eqref{range.exp} and assume that $u_0$ satisfies \eqref{icond}. Then there exist two positive constants $0<C_1<C_2<\infty$ such that
\begin{equation}\label{bound}
C_1\leq\|u(t)\|_{\infty}\leq C_2, \quad t\in(0,\infty).
\end{equation}
\end{theorem}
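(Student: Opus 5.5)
The proof splits into a global upper bound from a stationary supersolution and a uniform lower bound from a subsolution.

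\emph{Upper bound.} The plan is to build a bounded stationary supersolution $W$ of \eqref{eq1} that lies above $u_0$, and then apply the comparison principle of \cite{AdB91, Su02}. We look for $W$ radially symmetric and of the form $W^m=V$ with
$$
-\Delta V\geq (1+|x|)^{\sigma}V^{p/m}.
$$
Since $p/m<1$, the equation $-\Delta V=(1+|x|)^{\sigma}V^{p/m}$ is sublinear. For $\sigma<-2$ and $N\geq3$ it has positive radial solutions with $V(x)\to K$ as $|x|\to\infty$, for any constant $K>0$. To obtain them, we solve the integral equation
$$
V(r)=K+\int_r^\infty s^{1-N}\int_0^s t^{N-1}(1+t)^{\sigma}V(t)^{p/m}\,dt\,ds
$$
by monotone iteration or a fixed point argument on bounded functions. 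The double integral is finite precisely because $\sigma<-2$: the inner integral behaves like $s^{N+\sigma}$ (or like a constant if $N+\sigma<0$), and after multiplying by $s^{1-N}$ it is integrable at infinity since $\sigma+1<-1$. The sublinearity gives the a priori bound $\|V\|_\infty\leq K+C\|V\|_\infty^{p/m}$, so a bounded solution exists, with $K\leq V\leq C(K)$.

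More generally, since $-(\sigma+2)/(m-p)>0$, the decay hypothesis in \eqref{icond} allows $u_0$ to grow at infinity, though slower than $|x|^{-(\sigma+2)/(m-p)}$. Compactly supported or bounded data are handled by taking $K$ large. For growing data one first uses the singular stationary profile $c|x|^{-(\sigma+2)/(m-p)}$, which behaves like an exact stationary solution at infinity. One checks that a suitable multiple $\lambda U_*$ of a profile with this growth is a supersolution and lies above $u_0$ near infinity. Then the little-o condition, combined with the $L^1$-to-$L^\infty$ smoothing effect, shows that after some time $t_0$ the solution lies below a bounded supersolution on compacts. The main obstacle is this non-bounded data case: one must show $\|u(t)\|_\infty$ stays bounded even though $u_0$ grows. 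I would first try a family of supersolutions $\max\{V_K,\ \varepsilon|x|^{-(\sigma+2)/(m-p)}\}$-type barriers, then exploit the little-o condition to send $\varepsilon\to0$ while keeping the bound on compacts uniform in time.

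\emph{Lower bound.} It suffices to have $\|u(t)\|_\infty\geq C_1$ for all $t$. Solutions of the pure porous medium equation decay like $t^{-N/(N(m-1)+2)}$, so the reaction must be used. On a ball $B_R$ where $(1+|x|)^{\sigma}\geq c_R$, we compare $u$ from below with the solution $w$ of $w_t=\Delta w^m+c_Rw^p$ in $B_R$ with zero Dirichlet data, starting from $u(t_1)$, which is positive on a small ball by finite propagation. Because $p<m$, this Dirichlet problem has a positive stationary solution $S_R$, and small data converge to it (or at least stay above $\varepsilon S_R$). Concretely, $\varepsilon S_R$ is a stationary subsolution for small $\varepsilon$, since $\varepsilon^m\leq\varepsilon^p$ lets the reaction dominate. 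Placing $\varepsilon S_R$ under $u(t_1)$ then gives $u(x,t)\geq\varepsilon S_R(x)$ for $t\geq t_1$. For $t\in(0,t_1]$, continuity and $u_0\not\equiv0$ give a positive lower bound. This yields $C_1$, completing \eqref{bound}.
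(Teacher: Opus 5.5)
Your proof is correct once one paragraph is deleted, and your upper bound takes a genuinely different route from the paper.

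\textbf{The paragraph to delete.} You have inverted the tail condition in \eqref{icond}. Since $(\sigma+2)/(m-p)<0$, the factor $|x|^{-(\sigma+2)/(m-p)}$ tends to infinity, so \eqref{icond} means $u_0(x)=o\bigl(|x|^{(\sigma+2)/(m-p)}\bigr)$. Thus $u_0$ decays at infinity and, being continuous, is bounded. There is no ``growing data'' case. For an unbounded $u_0$ the upper bound in \eqref{bound} would in fact be false as $t\to0$, by the initial trace. Also, the profile $c|x|^{-(\sigma+2)/(m-p)}$ you invoke is not a stationary solution; the singular one is $K_0|x|^{(\sigma+2)/(m-p)}$ from \eqref{stat.sol}. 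So your ``main obstacle'' is fictitious, and the choice $K\geq\|u_0\|_\infty^m$ already completes the upper bound.

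\textbf{Comparison with the paper.} The paper uses the explicit decreasing supersolution $\overline{S}_\lambda(x)=\lambda K_0(1+|x|)^{(\sigma+2)/(m-p)}$ of Lemma \ref{lem.statbd}. This requires \eqref{pscond}; for $p\geq p_c(\sigma)$ it passes to a larger $\overline{\sigma}$. It also uses exactly the tail condition to place $u_0$ below the barrier near infinity.

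Your barrier $W=V^{1/m}$ comes from the integral equation by monotone iteration. The double integral is finite exactly because $\sigma<-2$ and $N\geq3$, i.e.\ the Newtonian potential of $(1+|x|)^{\sigma}$ is bounded. The barrier is not explicit, but it covers all $p\in(1,m)$ at once. Since $W\geq K^{1/m}$, it lies above every bounded $u_0$, so it shows the tail condition is not needed for \eqref{bound} at all. What you lose is the pointwise decay $u(x,t)\leq\lambda K_0(1+|x|)^{(\sigma+2)/(m-p)}$. The paper reuses that decay for the upper bound in \eqref{decay} and in \eqref{lower}.

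\textbf{Lower bound.} This is the paper's argument: a small multiple of a Dirichlet stationary solution is a subsolution because $p<m$. Freezing the weight at its minimum on the ball, instead of applying Brezis--Oswald with the variable weight, changes nothing.

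You can simplify it. Since only $\|u(t)\|_\infty$ matters, the ball need not be centered at the origin. The continuous, nontrivial $u_0$ is already positive on some $B(x_0,2\rho)$, so you can build $\varepsilon S$ on $B(x_0,\rho)$ and compare from $t=0$. This removes the waiting time $t_1$. For $t_1$, ``finite propagation'' is the wrong property anyway; expansion of positivity is what would be needed. It also removes your loosely justified bound on $(0,t_1]$.
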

In other words, we prove that the maximum of the solutions to Eq. \eqref{eq1} remains bounded forever between two constants that are independent of time. This contrasts with the unboundedness established for any solution to Eq. \eqref{eq1} in the range $\sigma\in(-2,0)$, where any nontrivial nonnegative solution presents either finite time blow-up or grow-up as $t\to\infty$. Taking into account that, if instead of a weight $(1+|x|)^{\sigma}$ we have a constant in front of the source term, then any nontrivial solution blows up in a finite time as well, we observe that the large time behavior is decisively influenced by the behavior as $|x|\to\infty$ of the weight (and of the initial condition). Thus, a sufficiently rapidly decaying weight such as $(1+|x|)^{\sigma}$ with $\sigma<-2$, together with initial conditions presenting a tail as in \eqref{icond}, ensure uniform boundedness at any time, despite the effect of the source term. As we shall see, the fact that $p<m$ also plays a decisive role in this boundedness, and in fact our expectation is that, if $p>m$, many solutions can still blow up in a finite time, but this range will not be addressed in this work.

For the following results, let us introduce the condition:
\begin{equation}\label{pscond}
\sigma>\sigma_0:=\frac{N-2}{m}-N, \quad 1<p<p_c(\sigma):=\frac{m(N+\sigma)}{N-2}.
\end{equation}
Note that the first inequality in \eqref{pscond} is needed in order to ensure the fact that $p_c(\sigma)>1$. Moreover, if $N\geq3$, then
$$
\frac{N-2}{m}-N+2=-\frac{(N-2)(m-1)}{N}<0,
$$
which implies that $-N<\sigma_0<-2$ and thus there is no contradiction between the conditions \eqref{range.exp} and \eqref{pscond}.

As it is well established, self-similar solutions are essential in the understanding nonlinear diffusion and reaction-diffusion equations and, even if there is no particular self-similar solution to a specific equation, a self-similar behavior is still sometimes observed for general solutions. In our case, introducing the self-similar ansatz
\begin{equation}\label{SSS}
u(x,t)=t^{-\alpha}f(|x|t^{-\beta}),
\end{equation}
in Eq. \eqref{eq0}, one readily finds the exponents
\begin{equation}\label{SSexp}
\alpha:=\frac{\sigma+2}{\sigma(m-1)+2(p-1)}>0, \quad \beta:=-\frac{m-p}{\sigma(m-1)+2(p-1)}>0.
\end{equation}
The positivity of $\alpha$ and the self-similar form \eqref{SSS} might suggest that any self-similar solution to Eq. \eqref{eq0} decays with time. This is actually not true, since there are no bounded self-similar solutions to Eq. \eqref{eq0}. However, the exponents $\alpha$ and $\beta$ play a role in the study of Eq. \eqref{eq1}, as seen in the forthcoming results. We state first a theorem establishing a decay rate in sets lying far from the origin.
\begin{theorem}\label{th.outer}
Let $m$, $p$, $\sigma$ and $N$ be as in \eqref{range.exp} and such that the condition \eqref{pscond} is fulfilled and assume that $u_0$ satisfies \eqref{icond}. Then, for $\delta>0$ sufficiently small, there are positive constants $0<D_1<D_2<\infty$ (depending on $\delta$) and a time $t_0>0$ such that
\begin{equation}\label{decay}
D_1t^{-\alpha}\leq \|u(t)\|_{L^{\infty}(\mathcal{O}_{\delta}(t))}\leq D_2t^{-\alpha}, \quad \mathcal{O}_{\delta}(t):=\{x\in\real^N: |x|\geq\delta t^{\beta}\},
\end{equation}
for any $t>t_0$. In the absence of the condition \eqref{pscond}, the lower bound in \eqref{decay} is still in force.
\end{theorem}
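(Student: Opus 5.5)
The plan is to compare $u$ with self-similar profiles of the form \eqref{SSS} built for Eq. \eqref{eq0}, exploiting that in the outer region $|x|\geq\delta t^{\beta}$ the weight $(1+|x|)^{\sigma}$ is comparable to $|x|^{\sigma}$ for $t$ large. We use Theorem \ref{th.bound} as a starting point: $\|u(t)\|_{\infty}\leq C_2$ for all $t>0$, and $u(\cdot,t)$ is bounded below by $C_1$ at some point.

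\textbf{Lower bound.} We construct a subsolution in the form $\underline{u}(x,t)=(t+\tau)^{-\alpha}g(|x|(t+\tau)^{-\beta})$. Here $g$ is a compactly supported (or positive, decaying) profile solving the subsolution inequality for the ODE
$$
(g^m)''+\frac{N-1}{\xi}(g^m)'+\alpha g+\beta\xi g'+\kappa\xi^{\sigma}g^p\geq0 .
$$
This inequality is required only away from the origin, where $(1+|x|)^{\sigma}\geq\kappa|x|^{\sigma}$. Near the origin we use the fact that $u$ does not vanish: at some time $t_1$, positivity of $u$ and the lower bound in Theorem \ref{th.bound}, together with expansion of the support (or strict positivity), give $u(x,t_1)\geq c>0$ on a ball. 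The subsolution can be taken as a small multiple of a profile of the type $\xi^{-(\sigma+2)/(m-p)}$ truncated, with the truncation handled on an inner boundary $|x|=\delta(t+\tau)^{\beta}$. On that inner boundary, $\underline{u}$ is at most of order $t^{-\alpha}$, which tends to $0$. We then need $u\geq c\,t^{-\alpha}$ there, and I would obtain this by comparison on the exterior domain with a subsolution of the porous medium equation with source, using that $u$ stays positive on a fixed compact set. This lower bound does not use \eqref{pscond}, which matches the last sentence of the statement. Evaluating $\underline{u}$ at $|x|=\delta t^{\beta}$ gives $\|u(t)\|_{L^\infty(\mathcal{O}_\delta(t))}\geq D_1t^{-\alpha}$.

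\textbf{Upper bound.} We take a supersolution $\overline{u}=A(t+\tau)^{-\alpha}F(|x|(t+\tau)^{-\beta})$ on the exterior domain $\{|x|\geq\delta t^{\beta}\}$, with $F$ decreasing, chosen so that $F(\xi)\sim\xi^{-(\sigma+2)/(m-p)}$ as $\xi\to\infty$. This tail dominates $u_0$ thanks to \eqref{icond}. On the lateral boundary $|x|=\delta t^{\beta}$ we need $u\leq A t^{-\alpha}F(\delta)$. Theorem \ref{th.bound} gives only a bound by a constant, which is not enough. The genuine input is therefore a decay estimate $u(x,t)\leq C|x|^{-\gamma}$ for $|x|$ large, uniform in time, with $\gamma$ such that $|x|^{-\gamma}$ evaluated at $|x|=\delta t^{\beta}$ is $O(t^{-\alpha})$, i.e. $\gamma\beta\geq\alpha$. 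The condition \eqref{pscond}, $p<m(N+\sigma)/(N-2)$, is exactly the condition under which the fundamental-solution-type decay $|x|^{-(N-2)/m}$ of $u^m$ (a harmonic tail for $\Delta u^m$ in the stationary regime, $N\geq3$) makes the source $|x|^{\sigma}u^p\sim|x|^{\sigma-p(N-2)/m}$ integrable against the Newtonian kernel, i.e. $\sigma-p(N-2)/m>-N$. So I would first build a stationary supersolution $U(x)=K(1+|x|^2)^{-(N-2)/(2m)}$, corrected appropriately. Checking $\Delta U^m+(1+|x|)^{\sigma}U^p\leq0$ requires $\Delta U^m\approx -c|x|^{-N}$-type negativity from a slightly modified harmonic profile to beat the source, which works precisely when $\sigma-p(N-2)/m<-N$ and $K$ is large; the sublinearity $p<m$ allows choosing $K$ large. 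Comparison then gives $u\leq U$ for all times, provided $u_0\leq U$, which may need a time shift using Theorem \ref{th.bound}. Finally one checks $(N-2)\beta/m\geq\alpha$, or else refines the bound using the self-similar supersolution.

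\textbf{Main obstacle.} I expect the hardest step to be matching the boundary data on $|x|=\delta t^{\beta}$ for the upper bound. A stationary tail only yields the bound $t^{-\beta(N-2)/m}$, which need not coincide with $t^{-\alpha}$. I would first try an iterative comparison: a family of supersolutions of the form $\min\{U,\ \text{self-similar supersolution}\}$, with $\delta$ small, so that at the inner boundary the self-similar profile $F(\delta)$ is large enough to exceed the constant bound $C_2$ times $t^{\alpha}$.
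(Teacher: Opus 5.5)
Your upper-bound argument has a genuine gap, and it lies exactly at the step you call the genuine input. You correctly see that a time-uniform bound $u(x,t)\le C|x|^{-\gamma}$ with $\gamma\beta\ge\alpha$ would suffice. But you then set Theorem \ref{th.bound} aside as giving only a constant, and try instead to build a stationary supersolution with the harmonic-type tail $|x|^{-(N-2)/m}$. Under \eqref{pscond} no such supersolution exists. For your $U$ one has $-\Delta U^m=K^mN(N-2)(1+|x|^2)^{-(N+2)/2}\sim|x|^{-N-2}$, whereas $(1+|x|)^{\sigma}U^p\sim K^p|x|^{\sigma-p(N-2)/m}$, and \eqref{pscond} says precisely $\sigma-p(N-2)/m>-N$. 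The mismatch is in the power of $|x|$, so neither a large $K$ nor a harmonic-type correction helps. Under \eqref{pscond} the source is \emph{not} integrable at infinity, so it cannot be balanced by a harmonic tail; you have this direction reversed, and the condition $\sigma-p(N-2)/m<-N$ you end up requiring is the negation of \eqref{pscond}. The route is in fact doomed: under \eqref{pscond} one has $\beta(N-2)/m>\alpha$. So a bound $u\le C|x|^{-(N-2)/m}$ would give $\|u(t)\|_{L^\infty(\mathcal{O}_\delta(t))}=O(t^{-\beta(N-2)/m})=o(t^{-\alpha})$, contradicting the lower bound in \eqref{decay}.

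What you miss is that the proof of Theorem \ref{th.bound} gives far more than a constant. Under \eqref{pscond}, which is exactly what makes $K_0$ in \eqref{stat.sol} exist, comparison with the stationary supersolution of Lemma \ref{lem.statbd} yields $u(x,t)\le\lambda K_0(1+|x|)^{(\sigma+2)/(m-p)}<\lambda K_0|x|^{(\sigma+2)/(m-p)}$ for all $t>0$. Since $(\sigma+2)/(m-p)=-\alpha/\beta$ (so $\gamma\beta=\alpha$ exactly), for $|x|\ge\delta t^{\beta}$ this is at most $\lambda K_0\delta^{-\alpha/\beta}t^{-\alpha}$. The upper bound therefore follows in one line, with no lateral boundary matching, self-similar supersolution or iteration. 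This is the tail you wanted for $F$; note that $\xi^{-(\sigma+2)/(m-p)}$ as you wrote it is increasing, and the correct exponent is $(\sigma+2)/(m-p)<0$.

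For the lower bound, your compactly supported option is what the paper does, but the existence of such a profile $g$ is the nontrivial step and you do not provide it. The paper obtains it from a $Q_2\to Q_3$ connection of the phase-space system (Proposition \ref{prop.subs}). It then rescales it via Lemma \ref{lem.subs1}, so that $(1+|x|)^{\sigma}\ge c|x|^{\sigma}$ holds on its support for large $t$, and places it below $u$ after a time shift (Proposition \ref{prop.subs2}). With support in $[\xi_1,\xi_2]\subset(0,\infty)$, no inner boundary is needed. Your alternative, a truncated profile with an inner boundary $|x|=\delta(t+\tau)^{\beta}$ on which you need $u\ge ct^{-\alpha}$, is circular, since that is the very estimate to be proved.
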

The sets of the form $\mathcal{O}_{\delta}$ are usually known as \emph{outer sets} or \emph{far field sets} and are useful in describing the behavior of solutions to nonlinear diffusion equations. The reason for this is the self-similar structure of such equations: the variables defined by
$$
v(y,\tau)=t^{\alpha}u(x,t), \quad y=xt^{\beta}, \quad \tau=\ln\,t
$$
are called self-similar variables and regularly employed in the development of the theory of nonlinear diffusion. Thus, the decay rate \eqref{decay} is understood as a uniform boundedness of the solutions rescaled in self-similar variables, but in sets (in variables $(y,\tau)$) of the form $|y|\geq\delta>0$, that is, avoiding the origin (otherwise, a contradiction with Theorem \ref{th.bound} would appear).

As an outcome of Theorems \ref{th.bound} and \ref{th.outer}, a very interesting phenomenon is obtained for Eq. \eqref{eq1}: it has a large time behavior with \emph{two different time scales}, a uniform boundedness actually stemming from the behavior on compact sets in variables $(x,t)$ and a self-similar decay with a rate $t^{-\alpha}$ in outer sets limited by the self-similar curves $|x|=\delta t^{\beta}$ for any $\delta>0$. Such a very interesting phenomenon has been also noticed in \cite{KRV10} for the non-homogeneous porous medium equation
$$
\varrho(x)u_t=\Delta u^m, \quad \varrho(x)\sim|x|^{-\gamma} \quad {\rm as} \ |x|\to\infty,
$$
for $\gamma>2$, despite the fact that there is no obvious connection between the previous equation and Eq. \eqref{eq1}.

Restricting ourselves to compactly supported initial conditions, we establish the finite speed of propagation of the support and its expansion rate, which corresponds to the self-similar exponents again.
\begin{theorem}\label{th.supp}
Let $m$, $p$, $\sigma$ and $N$ be as in \eqref{range.exp} and assume that $u_0$ satisfies \eqref{icond} and is compactly supported. Then, $u(t)$ remains compactly supported for any $t\in(0,\infty)$. If, moreover, the condition \eqref{pscond} is fulfilled, denoting by
$$
R(t):=\sup\{|x|: x\in\real^N, u(x,t)>0\}
$$
the maximal length of the positivity set of $u$ at time $t$, then there exist two positive constants $0<c_1<c_2<\infty$ such that
\begin{equation}\label{supp}
c_1t^{\beta}\leq R(t)\leq c_2t^{\beta}, \quad t\in(1,\infty).
\end{equation}
In the absence of the condition \eqref{pscond}, the lower bound in \eqref{supp} is still in force.
\end{theorem}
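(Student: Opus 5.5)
Three separate claims need proof: finite propagation of the support, the upper bound $R(t)\le c_2t^{\beta}$ under \eqref{pscond}, and the lower bound $R(t)\ge c_1t^{\beta}$, which should hold without \eqref{pscond}. All three rest on comparison with radial self-similar or separate-variable barriers, using the comparison principle from \cite{AdB91,Su02}.

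\textbf{Finite propagation.} By Theorem \ref{th.bound}, $0\le u\le C_2$ for all times. Since $p<m$ and $p>1$, the reaction is bounded by $(1+|x|)^{\sigma}C_2^{p-1}u\le K u$. Hence on any interval $(0,T)$, $u$ is a subsolution of $v_t=\Delta v^m+Kv$. The change of variables $v=e^{Kt}w$ with a new time $s(t)$ transforms this into the porous medium equation. Comparing with a Barenblatt solution of large mass, shifted so that it lies above $u_0$, gives $\mathrm{supp}\,u(t)$ bounded for every finite $t$.

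\textbf{Upper bound on $R(t)$ under \eqref{pscond}.} The outer estimate of Theorem \ref{th.outer} (upper part) gives $u\le D_2t^{-\alpha}$ for $|x|\ge\delta t^{\beta}$ and $t>t_0$. The plan is a supersolution of travelling-wave/self-similar type living in the region $|x|\ge\delta t^{\beta}$. In self-similar variables, construct $\overline{U}(x,t)=t^{-\alpha}F(|x|t^{-\beta})$ with $F$ a compactly supported profile on $[\delta,\xi_0]$. On the boundary $|x|=\delta t^{\beta}$ it should dominate $D_2t^{-\alpha}$, which is achieved by taking $F$ large near $\delta$. For $|x|\ge\delta t^{\beta}$ we have $(1+|x|)^{\sigma}\le |x|^{\sigma}$, so it suffices that $F$ is a supersolution of the ODE for \eqref{eq0}:
$$
(F^m)''+\frac{N-1}{\xi}(F^m)'+\alpha F+\beta\xi F'+\xi^{\sigma}F^p\le0.
$$
A candidate is $F(\xi)=A(\xi_0-\xi)_+^{1/(m-1)}$ with $A$ large. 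Near the interface, the $(F^m)''$ and $\beta\xi F'$ terms dominate the source, as in the Barenblatt computation, since $p>1$ makes $F^p$ of higher order. The profile then has to be extended to the left boundary. Finally, match initial data at some $t_1>t_0$, using the finite-propagation step to ensure $R(t_1)<\xi_0t_1^{\beta}$. Comparison in the exterior domain then yields $R(t)\le\xi_0t^{\beta}$. Constant adjustments extend the bound to $t\in(1,t_1]$, since $R$ is bounded there.

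\textbf{Lower bound, without \eqref{pscond}.} This should follow from the construction behind the lower bound of Theorem \ref{th.outer}. That construction gives a subsolution of the form $t^{-\alpha}G(|x|t^{-\beta})$ (possibly time-shifted), compactly supported with $G>0$ on some interval $(\delta,\xi_1)$. It lies below $u$ after a waiting time, using positivity of $u$ on a ball for $t\ge1$. Then $u(x,t)>0$ at $|x|\approx\xi_1 t^{\beta}/2$, so $R(t)\ge c_1t^{\beta}$ for large $t$. For $t\in(1,T]$ the bound follows from $R(t)\ge R(1)>0$, because the support is nondecreasing: the solution is bounded below by the pure porous medium solution.

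\textbf{Main obstacle.} I expect the hardest step to be the supersolution in the exterior region. The profile must simultaneously absorb the source term near the inner boundary $\xi=\delta$ and dominate $D_2$ there. I expect condition \eqref{pscond} to enter through the spatial decay of the weight versus $F^p$ in this construction, and I would verify that first.
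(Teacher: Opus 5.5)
Your lower bound is the paper's argument: comparison with the subsolution $W_*$ of Proposition~\ref{prop.subs2}, plus non-shrinking of the support for bounded times. Your finite-propagation step is correct but takes a different route. The paper compares with the singular self-similar supersolution of Proposition~\ref{prop.largesup}, written for some $\sigma'\in(\sigma,-2)$ satisfying \eqref{pscond} if necessary, which gives $R(t)\leq Ct^{\beta'}$. Your argument ($u\leq C_2$, reaction $\leq C_2^{p-1}u$, $v=e^{Kt}w$ with a time change, Barenblatt comparison) avoids the phase-space analysis. It only yields an exponentially growing radius, but that suffices for the qualitative claim.

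The gap is in the upper bound $R(t)\leq c_2t^{\beta}$. You never construct the exterior supersolution, and the choices you indicate point the wrong way. For $F=A(\xi_0-\xi)_+^{1/(m-1)}$,
\begin{equation*}
(F^m)''+\beta\xi F'=\frac{A}{m-1}(\xi_0-\xi)^{\frac{2-m}{m-1}}\Big(\frac{m}{m-1}A^{m-1}-\beta\xi\Big),
\end{equation*}
while $\alpha F$, $\frac{N-1}{\xi}(F^m)'$ and $\xi^{\sigma}F^p$ are $O((\xi_0-\xi)^{1/(m-1)})$. Since $(F^m)''>0$, near the interface only the drift helps. The inequality there therefore forces $A^{m-1}\leq(m-1)\beta\xi_0/m$, the interface constant of \eqref{interm5}, so $A$ must be small, not large.

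Moreover, $\Delta F^m+\alpha F+\beta\xi F'$ is invariant under $F\mapsto\delta^{2/(m-1)}F(\cdot/\delta)$. The source, by contrast, picks up the factor $\delta^{(\sigma(m-1)+2(p-1))/(m-1)}$, which blows up as $\delta\to0$ because $\sigma(m-1)+2(p-1)<0$. So for a Barenblatt-type shape the source can be absorbed only at a large inner radius. That is the opposite of the small-$\delta$ regime of Theorem~\ref{th.outer}, and of your plan to make $F$ large near a small $\delta$. The unspecified ``extension to the left boundary'' is exactly the missing step. Also, $R(t_1)<\xi_0t_1^{\beta}$ alone does not give $u(t_1)\leq\overline{U}(t_1)$ on $|x|\geq\delta t_1^{\beta}$.

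Your route can be repaired by going to large $\delta$.
\begin{itemize}
\item Take $\hat F=\hat B^{1/(m-1)}(\hat\xi_0-\hat\xi)_+^{1/(m-1)}$ on $[1,\hat\xi_0]$ with $\hat B$ and $\hat\xi_0-1$ small. After division by the positive factor $\frac{\hat B^{1/(m-1)}}{m-1}(\hat\xi_0-\hat\xi)^{(2-m)/(m-1)}$, the diffusion--drift part is then at most $-\beta/2$.
\item Set $F(\xi)=\delta^{2/(m-1)}\hat F(\xi/\delta)$. Once $\delta$ is large, this is a supersolution of \eqref{eq0}, hence of \eqref{eq1}, on $|x|>\delta t^{\beta}$.
\item For such $\delta$, $F(\delta)=\delta^{2/(m-1)}\hat F(1)$ exceeds $\lambda^*K_0\delta^{(\sigma+2)/(m-p)}$, which by \eqref{interm11}--\eqref{interm12} bounds $t^{\alpha}u$ on $|x|=\delta t^{\beta}$.
\item Taking also $\delta>R(1)$ makes the comparison at $t=1$ trivial.
\end{itemize}
Comparison on $\{|x|>\delta t^{\beta},\,t>1\}$ then gives $R(t)\leq\delta\hat\xi_0t^{\beta}$. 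Here \eqref{pscond} enters only through the boundary bound, not through the weight versus $F^p$ as you expected.

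The paper sidesteps the exterior problem altogether. The exact solution $U$ of Proposition~\ref{prop.largesup} behaves like $\xi^{-(N-2)/m}$ at the origin, and \eqref{pscond} gives $\beta(N-2)/m>\alpha$, so $U(x,t)\to\infty$ for each $x\neq0$. A time shift therefore puts $U(\cdot,\tau_0)$ above $u_0$ on all of $\real^N$, giving $R(t)\leq\xi_0(t+\tau_0)^{\beta}$ directly.
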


We complete the results of the paper with the following \emph{improved lower bound}, which significantly strengthens the conclusion of Theorem \ref{th.bound}. For the sake of completeness and consistency with the previous theorems, we state the result together with the corresponding upper bound (which follows immediately from Theorem \ref{th.bound} and its proof). The essential novelty, however, lies in the lower bound.
\begin{theorem}\label{th.lower}
Let $m$, $p$, $\sigma$ and $N$ be as in \eqref{range.exp} and such that the condition \eqref{pscond} is fulfilled. Assume that $u_0$ satisfies \eqref{icond}. Then there exist positive constants $K_1$, $K_2\in(0,\infty)$ with $K_1<K_2$ such that, for any compact set $D\subset\real^N$, there exists a time $t_{D}>0$ satisfying
\begin{equation}\label{lower}
K_1(1+|x|)^{(\sigma+2)/(m-p)}\leq u(x,t)\leq K_2(1+|x|)^{(\sigma+2)/(m-p)}, \quad {\rm for} \ t>t_D.
\end{equation}
If $p\geq p_c(\sigma)$, the lower bound in \eqref{lower} still holds.
\end{theorem}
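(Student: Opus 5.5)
The plan is to build stationary super- and subsolutions of the form $W(x)=K(1+|x|)^{-\gamma}$, where $\gamma:=-(\sigma+2)/(m-p)>0$, and then compare. The upper bound is essentially contained in the proof of Theorem \ref{th.bound}, so only the lower bound needs new work.

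Set $r=|x|$ and $s=1+r$. A direct computation gives
$$
\Delta W^m+(1+r)^{\sigma}W^p = -K^m m\gamma\, s^{-m\gamma-2}\Big[(N-1)\tfrac{s}{r}-(m\gamma+1)\Big]+K^p s^{\sigma-p\gamma}.
$$
The choice of $\gamma$ gives $\sigma-p\gamma=-m\gamma-2$, so both terms carry the same power of $s$. The factor $(N-1)s/r-(m\gamma+1)$ is bounded below by $N-2-m\gamma$ for all $r>0$. This quantity is positive exactly when $m\gamma<N-2$, i.e. $-m(\sigma+2)<(N-2)(m-p)$, which rearranges to $p<p_c(\sigma)$.

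\emph{Upper bound.} Assume \eqref{pscond}. For $K$ large, using $p<m$, we get $K^p\le K^m m\gamma(N-2-m\gamma)$. Then $W$ is a stationary supersolution in $\real^N\setminus\{0\}$, and near the origin $W$ is $C^1$ up to a harmless cusp, since $\nabla W^m$ points inward; this can also be smoothed. By \eqref{icond}, $u_0\le K\,W/K$ after enlarging $K$. The tail condition makes $u_0(x)\le C(1+|x|)^{-\gamma}$, because $u_0$ is continuous and $|x|^{\gamma}u_0\to0$. The comparison principle then gives $u\le K_2(1+|x|)^{-\gamma}$ for all $t$.

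\emph{Lower bound.} Here the same computation shows that for small $K$, $W_K$ is a subsolution wherever the bracket is positive or where the reaction dominates. Indeed, for $K$ small the reaction $K^p$ beats the diffusion term $K^m$ everywhere, since $p<m$ and the coefficient $(N-1)s/r-(m\gamma+1)$ is bounded above away from $r=0$. The singularity at $r=0$, where $s/r\to\infty$ and the Laplacian is favorable, needs care. To handle it, I would take the subsolution $\underline W=K_1\min\{(1+|x|)^{-\gamma},\,c\}$, or a smooth radial cap, so the minimum of two subsolutions is used where needed. This works for every $p\in(1,m)$, which explains why the lower bound survives for $p\ge p_c(\sigma)$.

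The main obstacle is that $u_0$ need not dominate $\underline W$, since $u_0$ may be compactly supported. I would therefore use a time-dependent approach. By Theorem \ref{th.outer} (lower bound, valid without \eqref{pscond}), $u(x,t)\ge D_1t^{-\alpha}$ somewhere in the outer region. Better, from the proof of the lower bound in Theorem \ref{th.supp}, $u$ dominates a subsolution of self-similar type, $t^{-\alpha}g(|x|t^{-\beta})$, with $g$ positive on a ball. For $t$ large, on a fixed compact $D$, this self-similar subsolution is eventually bounded below. I would then glue: consider $\underline W_\varepsilon(x)$ truncated to a large ball $B_\rho\supset D$ with zero Dirichlet data. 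This is a subsolution of the type $K_1\big((1+|x|)^{-\gamma}-(1+\rho)^{-\gamma}\big)_+$, and one checks subsolution inequalities for small $K_1$. The time-dependent subsolution supplies positivity on $B_\rho$ at some time $t_1$, and one can take $\varepsilon$ so small that $\varepsilon\underline W\le u(t_1)$. The remaining step, which I expect to be delicate, is that the solution starting from a small multiple of the stationary subsolution increases monotonically in $t$, since a subsolution datum gives a nondecreasing solution. Its limit is a positive stationary solution on $B_\rho$ that is at least $K_1$ times the profile, because the small stationary subsolutions are ordered and the limit must dominate the maximal small one by a uniqueness/energy argument for $\Delta w^m+(1+|x|)^{\sigma}w^p=0$ with $p<m$. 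This yields a time $t_D$ after which \eqref{lower} holds on $D$.
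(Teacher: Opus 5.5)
Your upper bound is the paper's argument (Lemma \ref{lem.statbd} and \eqref{upper}). For the lower bound you take a genuinely different route, and once three points below are repaired it works.

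\textbf{Comparison of the two routes.} The paper first reduces to radially non-increasing data (Lemma \ref{lem.rad}). It then takes $u\ge\kappa$ on $B(0,r_0)$ from the proof of Theorem \ref{th.bound}, and uses $V=K(1+|x|)^{-\gamma}$ only as a subsolution in $\{|x|\ge r_0\}$. Finally it shows that the first crossing point $s(t)$ of $u(\cdot,t)$ and $V$ tends to infinity, through an intersection argument with the expanding subsolution $W_*$ of Proposition \ref{prop.subs2}. The identity $\alpha/\beta=\gamma$ is what keeps the maximum of $W_*(\cdot,t)$ above $V$ for all $t$, and the argument gives $s(t)\gtrsim t^{\beta}$, hence a rate for $t_D$.

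You instead extend the Dirichlet argument of Theorem \ref{th.bound} to large balls $B_\rho$:
\begin{itemize}
\item you use $\rho$-uniform truncated stationary subsolutions $\underline W_\rho$;
\item the Dirichlet evolution started from $\varepsilon\underline W_\rho$ is nondecreasing and converges to the unique positive solution $U_\rho$ of \eqref{Dirp} in $B_\rho$ (Lemma \ref{lem.statbelow});
\item running the same evolution from $\underline W_\rho$ itself and invoking uniqueness gives $U_\rho\ge\underline W_\rho$;
\item Dini's theorem turns $v(t)\uparrow U_\rho$ into a bound on $D$.
\end{itemize}
This avoids radial symmetry, the constant $\kappa$ and all self-similar subsolutions. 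It works verbatim for $p\ge p_c(\sigma)$, and it gives a $K_1$ depending only on $m,p,N,\sigma$, not on $u_0$. The price is that it gives no rate for $t_D$.

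\textbf{Points to repair.}
\begin{enumerate}
\item \emph{The cap near the origin.} The cusp of $(1+|x|)^{-\gamma}$ at $0$ is unfavorable, not favorable, for a subsolution, since $\Delta W^m\sim -C/r$. Moreover, $K_1\min\{(1+|x|)^{-\gamma},c\}$ is not a subsolution. A minimum of subsolutions is not a subsolution in general, and here the junction is a concave corner, which produces a negative singular part of $\Delta\underline W^m$. Use your smooth radial cap instead; that option does work.
\item \emph{Positivity on $B_\rho$.} The profile of $W_*$ is supported in $[\xi_1,\xi_2]\subset(0,\infty)$, not on a ball. On any fixed compact set $W_*$ vanishes for large $t$, so it cannot give positivity on $B_\rho$. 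All you actually need is $u(t_1)>0$ on $\overline{B_\rho}$ for some $t_1$. This follows by comparison with a small Barenblatt solution of the porous medium equation placed below $u_0$.
\item \emph{Independence of $K_1$ from $\rho$.} This is the heart of the matter: otherwise the constant depends on $D$. You assert it without checking, but it is true. Set
\[
\psi_\rho=(1+r)^{-\gamma}-(1+\rho)^{-\gamma},\quad q=1-\Big(\frac{1+r}{1+\rho}\Big)^{\gamma},\quad B(r)=\frac{(N-1)(1+r)}{r}-(\gamma+1).
\]
Then
\[
\Delta(K\psi_\rho)^m+(1+r)^{\sigma}(K\psi_\rho)^p=K^p(1+r)^{-m\gamma-2}q^{m-2}\Big(q^{p-m+2}-K^{m-p}m\gamma\big[qB(r)-(m-1)\gamma\big]\Big).
\]
Wherever the square bracket is positive and $r\ge r_0$, one has $q\ge(m-1)\gamma/B(r_0)$ and $qB(r)\le B(r_0)$. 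Hence a $K$ depending only on $m,p,N,\sigma,r_0$ works for every $\rho$. The smooth cap on $B(0,r_0)$ is handled the same way, because $\psi_\rho$ stays bounded below there once $\rho$ is large.
\end{enumerate}
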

This result is substantially stronger than the lower bound obtained in Theorem \ref{th.bound}, since it provides a pointwise lower bound which is uniform on compact sets (the time $t_D>0$ being the only element depending on the compact set $D$ in \eqref{lower}), while the estimate \eqref{bound} only gives a lower bound for the $L^{\infty}$ norm of $u(t)$. Nevertheless, the result of Theorem \ref{th.bound} is an essential ingredient in the proof of \eqref{lower}, together with some technical constructions already employed in the proofs of Theorems \ref{th.outer} and \ref{th.supp}. Observe also that \eqref{lower} implies that, if \eqref{pscond} is satisfied, any asymptotic profile of Eq. \eqref{eq1} must be sandwiched between two stationary functions having the same decay as $|x|\to\infty$.

We plot in Figure \ref{fig2} the result of an experiment illustrating how a solution remains bounded and its maximum only changes very little, while its support expands.

\begin{figure}[ht!]
  \begin{center}
  \includegraphics[width=11cm,height=8cm]{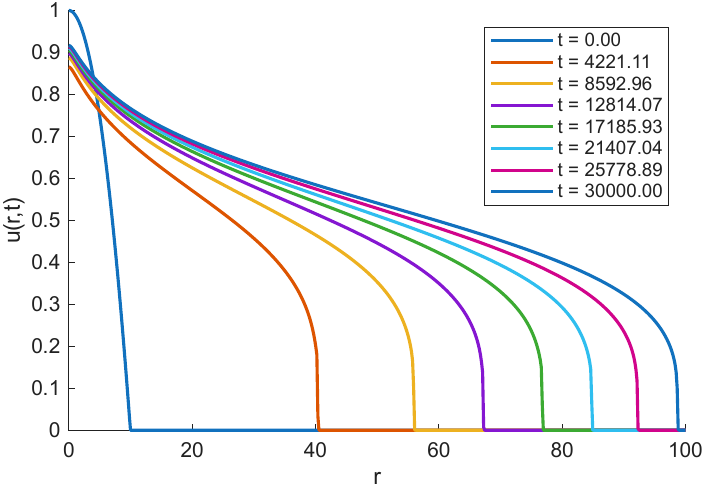}
  \end{center}
  \caption{The evolution of a compactly supported solution at different times. Experiment for $m=5$, $p=2$, $N=3$, $\sigma=-2.5$}\label{fig2}
\end{figure}

The proofs are based mainly on comparison arguments, the suitable sub- and supersolutions employed in the comparison being strongly related to Eq. \eqref{eq0}. This is why, especially in preparation for the proof of Theorem \ref{th.supp}, a rather technical study of some (singular) self-similar solutions to Eq. \eqref{eq0}, employing a phase space analysis associated to a dynamical system, is needed in order to identify such supersolutions.

\medskip

\noindent \textbf{Structure of the paper}. The paper consists of five sections. Theorem \ref{th.bound} is proved in Section \ref{sec.bound}, based on comparison with stationary sub- and supersolutions. The following Section \ref{sec.tech} is the longest one in the paper and contains a number of technical results that are essential for the proof of Theorems \ref{th.outer} and \ref{th.supp}. Relying on an alternative formulation of Eq. \eqref{SSODE} as an autonomous three-dimensional dynamical system together with a careful study of its trajectories, we establish the existence of several self-similar supersolutions and subsolutions to Eq. \eqref{eq1} with suitable properties. Based on these technical results, the proof of Theorems \ref{th.outer} and \ref{th.supp} is performed in Section 4, while the proof of Theorem \ref{th.lower} is given in Section \ref{sec.lower}. We conclude the paper with a discussion of several interesting open questions and critical cases.

\section{Stationary sub- and supersolutions. Proof of Theorem \ref{th.bound}}\label{sec.bound}

In this section we prove Theorem \ref{th.bound}. The proof is based on the construction of stationary sub- and supersolutions to Eq. \eqref{eq1}. We first construct supersolutions, and the construction is strongly related to an explicit unbounded stationary solution to Eq. \eqref{eq0} that is given in the next statement.
\begin{lemma}\label{lem.statabove}
Let $m$, $p$, $N$ and $\sigma$ be as in \eqref{range.exp} and assume that the condition \eqref{pscond} is satisfied. Then the function
\begin{equation}\label{stat.sol}
S(x)=K_0|x|^{(\sigma+2)/(m-p)}, \quad K_0=\left[\frac{(m-p)^2}{m|\sigma+2|[m(N+\sigma)-p(N-2)]}\right]^{1/(m-p)},
\end{equation}
is a stationary solution to Eq. \eqref{eq0}. Moreover, for any $\lambda>1$, the function $\lambda S$ is a supersolution to Eq. \eqref{eq0}.
\end{lemma}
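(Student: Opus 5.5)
The plan is a direct computation. We look for a radial power solution $S(x)=K|x|^{k}$ of the stationary equation $\Delta S^m+|x|^{\sigma}S^p=0$ in $\real^N\setminus\{0\}$, determine $k$ and $K$, and check that the sign conditions needed for $K>0$ are exactly those in \eqref{range.exp} together with \eqref{pscond}.

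First, for $r=|x|>0$ we use the radial Laplacian of a power, $\Delta r^{a}=a(a+N-2)r^{a-2}$. This gives
$$
\Delta S^m=K^m\,mk\,(mk+N-2)\,r^{mk-2},\qquad |x|^{\sigma}S^p=K^p r^{\sigma+pk}.
$$
Matching the exponents, $mk-2=\sigma+pk$, forces $k=(\sigma+2)/(m-p)$, which is negative since $\sigma<-2$ and $p<m$. Matching the coefficients then requires
$$
K^{m-p}=-\frac{1}{mk(mk+N-2)}.
$$
Since $mk<0$, a positive $K$ exists if and only if $mk+N-2>0$. Multiplying by $m-p>0$, this is $m(\sigma+2)+(N-2)(m-p)>0$, that is, $m(N+\sigma)-p(N-2)>0$, which is precisely $p<p_c(\sigma)$. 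Substituting $k$ gives
$$
-mk(mk+N-2)=\frac{m|\sigma+2|\,[m(N+\sigma)-p(N-2)]}{(m-p)^2},
$$
which yields the constant $K_0$ in \eqref{stat.sol}.

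Next, for $\lambda>1$, homogeneity of both terms gives
$$
\Delta(\lambda S)^m+|x|^{\sigma}(\lambda S)^p=\lambda^m\Delta S^m+\lambda^p|x|^{\sigma}S^p=(\lambda^p-\lambda^m)|x|^{\sigma}S^p\leq0,
$$
because $\lambda>1$ and $p<m$. Since $\lambda S$ is time-independent, $\partial_t(\lambda S)=0\geq\Delta(\lambda S)^m+|x|^{\sigma}(\lambda S)^p$, so $\lambda S$ is a supersolution of Eq. \eqref{eq0}.

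The only delicate point is the origin, where $S$ is singular. This is the step I expect to need the most care, since it is needed to use $\lambda S$ in comparison arguments.
\begin{itemize}
\item Because $mk>2-N>-N$, the function $S^m$ is locally integrable.
\item The flux through small spheres is $r^{N-1}|\partial_r S^m|\sim r^{mk+N-2}\to0$ as $r\to0$, again by $mk+N-2>0$. Hence no Dirac mass at $x=0$ appears in the distributional Laplacian, and the identities above hold in $\mathcal{D}'(\real^N)$.
\item Alternatively, one can state the lemma classically in $\real^N\setminus\{0\}$. In the later comparisons only the region away from the origin matters, because there $S=+\infty$ at $x=0$ acts as an infinite barrier.
\end{itemize}
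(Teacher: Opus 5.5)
Your proposal is correct and follows essentially the paper's route. Both compute $\Delta S^m$ radially for $|x|>0$ and fix $K_0$ by making the coefficient vanish, which is possible exactly because $m(N+\sigma)-p(N-2)>0$; your identity $\Delta(\lambda S)^m+|x|^{\sigma}(\lambda S)^p=(\lambda^p-\lambda^m)|x|^{\sigma}S^p\leq0$ is a repackaging of the paper's remark that the bracket becomes negative when $K_0$ is replaced by $\lambda K_0$, and your discussion of the origin goes beyond the paper, which checks the equation only away from $x=0$ and later uses the bounded modification $\overline{S}_{\lambda}$.
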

\begin{proof}
Let $S(x)$ be as in \eqref{stat.sol} but with a constant $K_0$ to be determined. Letting $r=|x|$, we find by direct calculation that
\begin{equation*}
\begin{split}
&(S^m)'(r)=K_0^m\frac{m(\sigma+2)}{m-p}r^{m(\sigma+2)/(m-p)-1},\\
&(S^m)''(r)=K_0^m\frac{m(\sigma+2)}{m-p}\left(\frac{m(\sigma+2)}{m-p}-1\right)r^{m(\sigma+2)/(m-p)-2},\\
&r^{\sigma}S^p(r)=K_0^pr^{\sigma+p(\sigma+2)/(m-p)}.
\end{split}
\end{equation*}
Observing that
$$
\sigma+\frac{p(\sigma+2)}{m-p}=\frac{m(\sigma+2)}{m-p}-2=\frac{m\sigma+2p}{m-p},
$$
we find
\begin{equation}\label{interm2}
\begin{split}
-\Delta S^m(x)&-|x|^{\sigma}S^p(x)=-(S^m)''(r)-\frac{N-1}{r}(S^m)'(r)-|x|^{\sigma}S^p(r)\\
&=\left[-K_0^m\frac{m(\sigma+2)}{m-p}\left(\frac{m(\sigma+2)}{m-p}+N-2\right)-K_0^p\right]r^{(m\sigma+2p)/(m-p)}\\
&=-K_0^{p}\left[K_0^{m-p}\frac{m(\sigma+2)(m(\sigma+N)-p(N-2))}{(m-p)^2}+1\right]r^{(m\sigma+2p)/(m-p)}.
\end{split}
\end{equation}
Taking into account the conditions \eqref{pscond} and recalling that $\sigma+2<0$, we observe that there exists a value of $K_0$ for which the expression in the right-hand side of \eqref{interm2} is equal to zero, which is exactly the one given in \eqref{stat.sol}. Moreover, it is obvious that if we replace $K_0$ with $\lambda K_0$ for $\lambda>1$, the term in brackets in the last line of \eqref{interm2} becomes negative and thus the whole expression \eqref{interm2} is positive, proving that $\lambda S$ is a supersolution to Eq. \eqref{eq0} for $\lambda>1$.
\end{proof}
Since $\sigma<0$, it is obvious that any supersolution to Eq. \eqref{eq0} is a supersolution to Eq. \eqref{eq1} as well. However, since $S$ is unbounded as $x\to0$, we have to modify $S$ (and correspondingly $\lambda S$ with $\lambda>1$) in order to obtain a family of \emph{bounded} stationary supersolutions to Eq. \eqref{eq1}.
\begin{lemma}\label{lem.statbd}
Let $m$, $p$, $N$, $\sigma$ be as in \eqref{range.exp} and assume that \eqref{pscond} is satisfied. Let $K_0$ be as in \eqref{stat.sol}. Then, the following family of functions
\begin{equation*}\label{famsuper}
\overline{S}_{\lambda}(x)=\lambda K_0(1+|x|)^{(\sigma+2)/(m-p)}, \quad \lambda>1,
\end{equation*}
is composed of bounded supersolutions to Eq. \eqref{eq1}.
\end{lemma}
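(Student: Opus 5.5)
The plan is a direct computation in the radial variable, mirroring the proof of Lemma \ref{lem.statabove}. The idea is to compare the operator applied to $\overline{S}_{\lambda}$ with the same operator applied to $\lambda S$, with $|x|$ replaced by $\rho:=1+|x|$.

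Let $r=|x|$, $\rho=1+r$, $\gamma=(\sigma+2)/(m-p)<0$ and $k=m\gamma<0$. Then $\overline{S}_{\lambda}^m=(\lambda K_0)^m\rho^{k}$, and for $r>0$
$$
\Delta \rho^{k}=k(k-1)\rho^{k-2}+\frac{N-1}{r}\,k\rho^{k-1}.
$$
The only difference from the pure power case is the factor $1/r$ instead of $1/\rho$ in the first-order term. Since $k<0$, this term is negative, and $1/r>1/\rho$ makes it even more negative. Hence
$$
\Delta\rho^{k}\leq k(k-1)\rho^{k-2}+(N-1)k\rho^{k-2}=k(k+N-2)\rho^{k-2}.
$$
We have
$$
k+N-2=\frac{m(\sigma+N)-p(N-2)}{m-p}>0
$$
by \eqref{pscond}, so $k(k+N-2)<0$.

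For the reaction term, $(1+|x|)^{\sigma}\overline{S}_{\lambda}^p=(\lambda K_0)^p\rho^{\sigma+p\gamma}$. By the identity used in Lemma \ref{lem.statabove}, $\sigma+p\gamma=k-2$, so both terms carry the same power $\rho^{k-2}$. Therefore
$$
-\Delta\overline{S}_{\lambda}^m-(1+|x|)^{\sigma}\overline{S}_{\lambda}^p\geq -(\lambda K_0)^p\rho^{k-2}\Big[(\lambda K_0)^{m-p}\frac{m(\sigma+2)(m(\sigma+N)-p(N-2))}{(m-p)^2}+1\Big].
$$
The bracket is exactly the one in \eqref{interm2} with $K_0$ replaced by $\lambda K_0$. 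By the choice of $K_0$ it equals $1-\lambda^{m-p}$, which is negative for $\lambda>1$. So the right-hand side is positive, and $\overline{S}_{\lambda}$ is a pointwise supersolution for $x\neq0$. Boundedness is immediate, since $\rho\geq1$ and $\gamma<0$ give $0<\overline{S}_{\lambda}\leq\lambda K_0$.

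The only delicate point, which I expect to be the main (mild) obstacle, is the origin, where $\rho=1+|x|$ is not smooth. One has to check that $\overline{S}_{\lambda}^m$ is a supersolution in the weak sense of Definition \ref{def.weak}, i.e. that the distributional Laplacian carries no singular part at $x=0$ that could spoil the sign. I would handle this as follows.

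1. $\overline{S}_{\lambda}^m$ is Lipschitz, so its gradient $k(\lambda K_0)^m\rho^{k-1}x/|x|$ is bounded.
2. Integrate by parts against a nonnegative test function on $\real^N\setminus B(0,\varepsilon)$.
3. The boundary term on $\partial B(0,\varepsilon)$ is $O(\varepsilon^{N-1})\to0$, so it vanishes in the limit.
4. The pointwise expression for $\Delta\overline{S}_{\lambda}^m$ behaves like $r^{-1}$, which is integrable near the origin.
5. Hence the distributional Laplacian coincides with the pointwise one, and the inequality holds weakly.

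Moreover, the cone point at the origin points downward, since $k<0$ makes $\overline{S}_{\lambda}$ maximal at $x=0$. Its contribution to $\Delta\overline{S}_{\lambda}^m$ is therefore nonpositive, which only helps the supersolution inequality.
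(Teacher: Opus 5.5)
Your proposal is correct and follows essentially the same route as the paper: you reduce to the pure-power computation of Lemma \ref{lem.statabove} in the variable $1+r$, then use $\frac{N-1}{r}>\frac{N-1}{1+r}$ together with $(\overline{S}_{\lambda}^m)'<0$ to obtain an extra nonnegative term. Your treatment of the origin is more careful than the paper's one-line remark about the decreasing profile at the peak: $\overline{S}_{\lambda}^m$ is Lipschitz, the boundary term vanishes, and the pointwise Laplacian is of order $r^{-1}$ and hence in $L^1_{\rm loc}$, so the distributional Laplacian has no singular part.
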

\begin{proof}
Letting $r=|x|$, we deduce from Lemma \ref{lem.statabove} that, for any $\lambda>1$, $\overline{S}_{\lambda}$ satisfies
$$
-(\overline{S}_{\lambda}^m)''(r)-\frac{N-1}{1+r}(\overline{S}_{\lambda}^m)'(r)-(1+r)^{\sigma}\overline{S}_{\lambda}^p(r)\geq0.
$$
Recalling that $\sigma+2<0$, we further infer that
\begin{equation*}
\begin{split}
-\Delta\overline{S}_{\lambda}(x)-(1+|x|)^{\sigma}\overline{S}_{\lambda}^p(x)&
=-(\overline{S}_{\lambda}^m)''(r)-\frac{N-1}{r}(\overline{S}_{\lambda}^m)'(r)-(1+r)^{\sigma}\overline{S}_{\lambda}^p(r)\\
&\geq\left(\frac{N-1}{1+r}-\frac{N-1}{r}\right)(\overline{S}_{\lambda}^m)'(r)\\
&=\frac{N-1}{r(1+r)}\frac{-m(\sigma+2)}{m-p}(\lambda K_0)^m(1+r)^{m(\sigma+2)/(m-p)-1}\geq0,
\end{split}
\end{equation*}
proving that $\overline{S}_{\lambda}$ is a supersolution to Eq. \eqref{eq1} for any $\lambda>1$, at least at the points where it is regular. The decreasing profile of $\overline{S}_{\lambda}$ as a function of $r$, ensured by the negativity of the exponent $(\sigma+2)/(m-p)$, shows that it is also a supersolution around the peak $r=0$ where it is not differentiable, completing the proof.
\end{proof}
We proceed now to the construction of stationary subsolutions to Eq. \eqref{eq1}. This is done in a single step, without passing through Eq. \eqref{eq0}, by exploiting the properties of the elliptic counterpart of Eq. \eqref{eq1}.
\begin{lemma}\label{lem.statbelow}
Let $m$, $p$, $N$ and $\sigma$ be as in \eqref{range.exp} and let $R>0$. Then, there exists a unique nontrivial and nonnegative solution $U$ to the homogeneous Dirichlet problem
\begin{equation}\label{Dirp}
\left\{\begin{array}{ll}-\Delta U^m-(1+|x|)^{\sigma}U^p=0, & x\in B(0,R),\\ U(x)=0, & x\in\partial B(0,R), \end{array}\right.
\end{equation}
Moreover, for any $\gamma\in(0,1)$, the function $U_{\gamma}:=\gamma U$ is a subsolution to the same Dirichlet problem \eqref{Dirp}.
\end{lemma}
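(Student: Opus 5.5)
The plan is to pass to the variable $w=U^m$, which turns \eqref{Dirp} into the sublinear semilinear problem
$$
-\Delta w=(1+|x|)^{\sigma}w^{q}\ \ {\rm in}\ B(0,R), \qquad w=0\ \ {\rm on}\ \partial B(0,R), \qquad q:=\frac{p}{m}\in(0,1).
$$
Since $0<q<1$ and the weight $a(x):=(1+|x|)^{\sigma}$ is smooth, bounded and bounded away from zero on $\overline{B(0,R)}$, this is exactly the Brezis--Oswald (or Krasnoselskii) sublinear framework. Existence and uniqueness of a positive solution then come from standard arguments, and I will prove each piece directly.

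For existence, I will use monotone iteration between an ordered pair of sub- and supersolutions. For the subsolution, take $\varepsilon\phi_1$, where $\phi_1>0$ is the first Dirichlet eigenfunction of $-\Delta$ in $B(0,R)$ with eigenvalue $\lambda_1$. The inequality $\lambda_1\varepsilon\phi_1\leq a\,\varepsilon^q\phi_1^q$ is equivalent to $\lambda_1(\varepsilon\phi_1)^{1-q}\leq a$, which holds for $\varepsilon$ small because $q<1$ and $a\geq(1+R)^{\sigma}>0$. For the supersolution, take $M\psi$, where $-\Delta\psi=1$ in the ball with $\psi=0$ on the boundary. We need $M\geq a\,M^q\psi^q$, which holds for $M$ large, again since $q<1$. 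Choosing $\varepsilon$ smaller if necessary gives $\varepsilon\phi_1\leq M\psi$, using Hopf's lemma for both functions. The monotone scheme then yields a solution $w$ with $\varepsilon\phi_1\leq w\leq M\psi$. Hence $w>0$ in the ball, and by symmetry one may also take it radial. Setting $U=w^{1/m}$ gives the required solution.

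For uniqueness, I will use the Brezis--Oswald argument. The map $s\mapsto a(x)s^q/s$ is strictly decreasing, and that is the key ingredient. Let $w_1,w_2$ be two nontrivial nonnegative solutions. First, the strong maximum principle together with Hopf's lemma gives $w_i>0$ in the ball. Next I test the equation for $w_1$ with $(w_1^2-w_2^2)/w_1$ and the equation for $w_2$ with $(w_2^2-w_1^2)/w_2$, and subtract. Picone's identity shows the gradient part is nonnegative, while the right-hand side equals
$$
\int a\left(\frac{w_1^q}{w_1}-\frac{w_2^q}{w_2}\right)(w_1^2-w_2^2)\leq0.
$$
This forces $w_1=w_2$. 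The main technical obstacle is justifying these test functions near $\partial B(0,R)$. For that I will use the boundary comparability $w_1/w_2\in L^\infty$, which follows from the bounds $c\phi_1\leq w_i\leq C\phi_1$ (Hopf's lemma plus elliptic regularity, since $w_i\in C^{1,\alpha}$ up to the boundary). Alternatively, I can approximate by $w_i+\epsilon$ and let $\epsilon\to0$.

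Finally, for $\gamma\in(0,1)$ and $U_\gamma=\gamma U$, a direct computation gives
$$
-\Delta U_\gamma^m-(1+|x|)^{\sigma}U_\gamma^p=(1+|x|)^{\sigma}U^p(\gamma^m-\gamma^p)\leq0,
$$
because $\gamma^m\leq\gamma^p$ when $p<m$ and $\gamma<1$. Since $U_\gamma=0$ on the boundary, $U_\gamma$ is a subsolution of \eqref{Dirp}.
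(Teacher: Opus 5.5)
Your proposal is correct and follows the paper's route. The substitution $w=U^m$ reduces \eqref{Dirp} to a sublinear problem with $q=p/m<1$, and the subsolution property of $\gamma U$ is the same one-line computation. For existence and uniqueness the paper simply cites Brezis--Oswald, after checking that $w\mapsto(1+|x|)^{\sigma}w^{p/m}/w$ is decreasing with limits $\infty$ at $0$ and $0$ at $\infty$. You instead reprove those conclusions, by monotone iteration between $\varepsilon\phi_1$ and $M\psi$ and by the Brezis--Oswald/Picone uniqueness argument. One small slip: with the test functions $(w_1^2-w_2^2)/w_1$ and $(w_2^2-w_1^2)/w_2$ you must add, not subtract, the two tested identities to get your displayed right-hand side.
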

\begin{proof}
The existence of a unique nontrivial and nonnegative solution $U$ to \eqref{Dirp} follows from the general theory of sublinear elliptic equations given in \cite{BO86}. Indeed, letting
$$
f(x,w)=(1+|x|)^{\sigma}w^{p/m},
$$
the function $w\mapsto f(x,w)$ is obviously continuous on $[0,\infty)$ and $w\mapsto f(x,w)/w$ is decreasing on $(0,\infty)$ since $p<m$. Moreover, the function $x\mapsto f(x,w)$ belongs to $L^{\infty}(B(0,R))$ and $f(x,w)\leq 1+w$ for any $x\in B(0,R)$ and $w\geq0$. Since
$$
a_0(x):=\lim\limits_{w\to0}\frac{f(x,w)}{w}=\infty, \quad a_{\infty}(x):=\lim\limits_{w\to\infty}\frac{f(x,w)}{w}=0,
$$
it follows from \cite{BO86} that the Dirichlet problem
\begin{equation}\label{Dirpbis}
\left\{\begin{array}{ll}-\Delta w-(1+|x|)^{\sigma}w^{p/m}=0, & x\in B(0,R),\\ w(x)=0, & x\in\partial B(0,R), \end{array}\right.
\end{equation}
admits a unique nontrivial and nonnegative solution. Letting $U=w^{1/m}$ in \eqref{Dirpbis}, we deduce that $U$ is the unique nontrivial and nonnegative solution to the Dirichlet problem \eqref{Dirp}, as claimed. Moreover, letting $\gamma\in(0,1)$, we have
\begin{equation*}
\begin{split}
-\Delta U_{\gamma}^m(x)-(1+|x|)^{\sigma}U_{\gamma}^p(x)&=-\gamma^m\Delta U^m(x)-\gamma^p(1+|x|)^{\sigma}U^p(x)\\
&=(\gamma^m-\gamma^p)(1+|x|)^{\sigma}U^p(x)<0,
\end{split}
\end{equation*}
since $m>p$ and $\gamma\in(0,1)$. This shows that $U_{\gamma}=\gamma U$ is indeed a subsolution to the Dirichlet problem \eqref{Dirp} for any $\gamma\in(0,1)$.
\end{proof}
We are now in a position to prove the first boundedness result of this paper.
\begin{proof}[Proof of Theorem \ref{th.bound}]
We divide the proof into two parts, corresponding respectively to the upper and lower bound in \eqref{bound}.

\medskip

\noindent \textbf{Upper bound.} Let $m$, $p$, $N$, $\sigma$ be as in \eqref{range.exp} and $u_0$ as in \eqref{icond} and let $u$ be the solution to the Cauchy problem \eqref{eq1}-\eqref{ic}. As a first step, assume that $\sigma$ and $p$ are such that the conditions \eqref{pscond} are fulfilled. We infer from \eqref{icond} that there is $R_0>0$ such that
\begin{equation}\label{interm3}
u_0(x)\leq\overline{S}_{1}(x)=K_0(1+|x|)^{(\sigma+2)/(m-p)}, \quad {\rm for} \ |x|>R_0.
\end{equation}
Fix $\lambda>1$ sufficiently large such that, for any $x\in B(0,R_0)$, we have
\begin{equation}\label{interm4}
\overline{S}_{\lambda}(x)\geq\overline{S}_{\lambda}(R_0)=\lambda K_0(1+R_0)^{(\sigma+2)/(m-p)}>\|u_0\|_{L^{\infty}(B(0,R_0))},
\end{equation}
where we recall that $\overline{S}_{\lambda}$ is defined in Lemma \ref{lem.statbd} and is a decreasing function of $r=|x|$. Combining \eqref{interm3} and \eqref{interm4}, we deduce that
$$
u_0(x)\leq\overline{S}_{\lambda}(x), \quad x\in\real^N,
$$
and the comparison principle ensures that
\begin{equation}\label{upper}
u(x,t)\leq\overline{S}_{\lambda}(x)=\lambda K_0(1+|x|)^{(\sigma+2)/(m-p)}\leq\lambda K_0, \quad (x,t)\in\real^N\times(0,\infty),
\end{equation}
giving the uniform upper bound.

Now suppose that $p\in(1,m)$ and $\sigma<-2$ are such that \eqref{pscond} is no longer satisfied, that is, $p>p_c(\sigma)$. Recalling the definition of $p_c(\sigma)$ from \eqref{pscond}, we remark that $p_c(\sigma)$ is an increasing function of $\sigma$ and
$$
\lim\limits_{\sigma\to-2}p_c(\sigma)=m.
$$
It follows that there is $\overline{\sigma}\in(\sigma_0,-2)$ such that $\overline{\sigma}>\sigma$ and $p<p_c(\overline{\sigma})$. We thus deduce that the conditions \eqref{pscond} are satisfied for $p$ and $\overline{\sigma}$ and we can apply the first step, finding a suitable supersolution $\overline{S}_{\lambda}$ corresponding to Eq. \eqref{eq1} with $\overline{\sigma}$ in place of $\sigma$. Since
$$
(1+|x|)^{\overline{\sigma}}\geq(1+|x|)^{\sigma}, \quad x\in\real^N,
$$
we find that $\overline{S}_{\lambda}$ is a supersolution to Eq. \eqref{eq1} with exponent $\sigma$ as well. Thus, the upper bound follows again by comparison with the supersolution $\overline{S}_{\lambda}$, completing the proof of the upper bound in the general case.

\medskip

\noindent \textbf{Lower bound.} Assume first that $u_0(x)>0$ for $x\in B(0,2\delta)$ for some $\delta>0$. Let $U$ be the unique nontrivial and nonnegative solution to the homogeneous Dirichlet problem \eqref{Dirp} posed on the smaller ball $B(0,\delta)$. Let $\gamma\in(0,1)$ be sufficiently small such that
$$
\gamma\|U\|_{L^{\infty}(B(0,\delta))}\leq\inf\{u_0(x): x\in B(0,\delta)\}.
$$
This means that the subsolution $U_{\gamma}=\gamma U$ lies below $u_0$ in $B(0,\delta)$. A simple comparison argument shows, as a first step, that the solution $u$ to the Cauchy problem \eqref{eq1}-\eqref{ic} lies above the solution to the evolution Dirichlet problem
\begin{equation*}
\left\{\begin{array}{ll}u_t=\Delta u^m+(1+|x|)^{\sigma}u^p, & (x,t)\in B(0,\delta)\times(0,\infty),\\
u(x,0)=u_0(x), & x\in B(0,\delta), \\ u(x,t)=0, & (x,t)\in\partial B(0,\delta)\times(0,\infty).\end{array}\right.
\end{equation*}
Moreover, the solution to the Dirichlet problem lies (at any $t>0$) above the stationary subsolution $U_{\gamma}$. We thus deduce the lower bound
$$
\|u(t)\|_{\infty}\geq\gamma\|U\|_{\infty}, \quad t\in(0,\infty).
$$
Let us now pick a general $u_0$ as in \eqref{icond}. Since the solution $u$ to the Cauchy problem is a supersolution to the porous medium equation $u_t=\Delta u^m$ (with the same initial condition $u_0$), the standard theory of the porous medium equation (see for example \cite[Proposition 9.19]{VPME}) ensures that there exist $\delta>0$ and a time $t_0>0$ such that $u(x,t_0)>0$ for $x\in B(0,2\delta)$. We can thus start the previous comparison step at $t=t_0$ and establish the desired lower bound, completing the proof.
\end{proof}

\section{Some self-similar solutions and subsolutions}\label{sec.tech}

This is the most technical section of the paper. In order to proceed with the proof of Theorems \ref{th.outer} and \ref{th.supp}, we have to establish the existence of suitable solutions and subsolutions in self-similar form to Eq. \eqref{eq0}, and then employ them in order to compare with solutions to Eq. \eqref{eq1}. We thus look for forward self-similar solutions of the form \eqref{SSS}, with exponents $\alpha$ and $\beta$ given by \eqref{SSexp}. Substituting the ansatz \eqref{SSS} into Eq. \eqref{eq0}, we obtain by standard calculations that the profiles $f$ solve the differential equation
\begin{equation}\label{SSODE}
(f^m)''(\xi)+\frac{N-1}{\xi}(f^m)'(\xi)+\alpha f(\xi)+\beta\xi f'(\xi)+\xi^{\sigma}f^p(\xi)=0, \quad \xi>0,
\end{equation}
where we recall that $\xi=|x|t^{-\beta}$. In order to study the rather complicated equation \eqref{SSODE}, we employ a phase space analysis technique. To this end, we introduce the following functions
\begin{equation}\label{PSchange}
X(\xi)=\frac{m}{\alpha}\xi^{-2}f^{m-1}(\xi), \quad Y(\xi)=\frac{m}{\alpha}\xi^{-1}f^{m-2}(\xi)f'(\xi), \quad Z(\xi)=\frac{1}{\alpha}\xi^{\sigma}f^{p-1}(\xi),
\end{equation}
together with the new independent variable defined implicitly by
\begin{equation}\label{PSvar}
\frac{d\eta}{d\xi}=\frac{\alpha}{m}\xi f^{1-m}(\xi)=\frac{1}{\xi X(\xi)}.
\end{equation}
We then find by direct calculation that
\begin{equation*}
\begin{split}
&f'(\xi)=\frac{\alpha}{m}\xi Y(\xi)f(\xi)^{2-m}, \quad (f^m)'(\xi)=\alpha\xi Y(\xi)f(\xi),\\
&(f^m)''(\xi)=\alpha\left(\xi\frac{d}{d\xi}Y(\xi)f(\xi)+\frac{\alpha}{m}\xi^2f(\xi)^{2-m}Y(\xi)^2+Y(\xi)f(\xi)\right),
\end{split}
\end{equation*}
Substituting the previous expressions into \eqref{SSODE} and employing the chain rule in order to compute derivatives with respect to the variable $\eta$ introduced in \eqref{PSvar}, we derive after straightforward algebraic manipulations the autonomous dynamical system
\begin{equation}\label{PSsyst}
\left\{\begin{array}{ll}\dot{X}=X[(m-1)Y-2X],\\
\dot{Y}=-Y^2-\frac{\beta}{\alpha}Y-X-NXY-XZ,\\
\dot{Z}=Z[(p-1)Y+\sigma X].\end{array}\right.
\end{equation}
Note that we are only interested in the region $X\geq0$, $Z\geq0$ of the phase space and that the limiting planes $X=0$ and $Z=0$ are invariant under the system \eqref{PSsyst}.

\subsection{Critical points of the system \eqref{PSsyst}}

The first step in the classification of the trajectories of system \eqref{PSsyst} is to study its flow in the neighborhood of its critical points. We identify two isolated critical points and a critical line:
$$
Q_0=(0,0,0), \quad Q_1=\left(0,-\frac{\beta}{\alpha},0\right), \quad Q_{\gamma}=\left(0,0,\gamma\right), \quad \gamma>0.
$$
We next analyze the flow of the system in a neighborhood of $Q_0$ and $Q_1$ (the critical points $Q_{\gamma}$ with $\gamma>0$ will not be used in the sequel and are thus omitted from the analysis). The key point is $Q_1$, while $Q_0$ is analyzed only to discard it in the forthcoming proofs, thus we start with the analysis of $Q_1$.
\begin{lemma}\label{lem.Q1}
The critical point $Q_1$ is a saddle point with a two-dimensional stable manifold and a one-dimensional unstable manifold, the latter being fully contained in the invariant $Y$-axis. The profiles corresponding to the trajectories entering $Q_1$ on its stable manifold present an interface behavior, in the sense that there exist $\xi_0\in(0,\infty)$ and $\delta\in(0,\xi_0)$ such that
\begin{equation}\label{beh.int}
f(\xi)>0 \ {\rm for} \ \xi\in(\xi_0-\delta,\xi_0), \quad f(\xi_0)=0, \quad (f^m)'(\xi_0)=0.
\end{equation}
\end{lemma}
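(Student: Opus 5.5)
We linearize the system \eqref{PSsyst} at $Q_1=(0,-\beta/\alpha,0)$. Setting $Y=-\beta/\alpha+\bar Y$, the Jacobian is lower triangular in a suitable order:
\begin{equation*}
J(Q_1)=\begin{pmatrix} -(m-1)\beta/\alpha & 0 & 0\\ -1+N\beta/\alpha & \beta/\alpha & \beta/\alpha\\ 0 & 0 & -(p-1)\beta/\alpha\end{pmatrix}.
\end{equation*}
Indeed, $\partial_Y\dot Y=-2Y-\beta/\alpha=\beta/\alpha$ at $Q_1$, and $\partial_Z\dot Y=-X=0$ there. The eigenvalues are
\begin{equation*}
\lambda_1=-\frac{(m-1)\beta}{\alpha}, \qquad \lambda_2=\frac{\beta}{\alpha}, \qquad \lambda_3=-\frac{(p-1)\beta}{\alpha}.
\end{equation*}
Since $\alpha,\beta>0$, two eigenvalues are negative and one is positive, so $Q_1$ is a hyperbolic saddle with a two-dimensional stable manifold. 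The eigenvector for $\lambda_2$ is $(0,1,0)$. The $Y$-axis $\{X=0,Z=0\}$ is invariant, because both planes $X=0$ and $Z=0$ are invariant, and on it $\dot Y=-Y(Y+\beta/\alpha)$. Therefore the unstable manifold, which is unique and tangent to $(0,1,0)$, coincides locally with the $Y$-axis.

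Next we derive the interface behavior. A trajectory entering $Q_1$ on the stable manifold with $X,Z>0$ satisfies $Y\to-\beta/\alpha$ and $X,Z\to0$. Near $Q_1$ we have $\dot X\approx\lambda_1X$, so $X\sim Ce^{\lambda_1\eta}$ as $\eta\to\infty$. By the definition \eqref{PSchange},
\begin{equation*}
\frac{Y}{X}=\frac{\xi (f^{m-1})'}{(m-1)f^{m-1}},
\end{equation*}
and since $Y\to-\beta/\alpha$, we get
\begin{equation*}
(f^{m-1})'(\xi)=\frac{(m-1)\alpha}{m}\,\xi Y\sim-\frac{(m-1)\beta}{m}\,\xi.
\end{equation*}
Thus along the trajectory $f^{m-1}$ has a derivative bounded away from $0$ and is decreasing. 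To show that $\xi$ stays finite, we use \eqref{PSvar}: $d\xi/d\eta=\xi X$, so $\ln\xi=\int X\,d\eta$ converges because $X$ decays exponentially in $\eta$. Hence $\xi(\eta)\to\xi_0\in(0,\infty)$ as $\eta\to\infty$. We must also exclude $\xi_0=0$, which follows since $\xi$ is increasing in $\eta$ and starts from a positive value.

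Since $X=\frac{m}{\alpha}\xi^{-2}f^{m-1}\to0$ with $\xi\to\xi_0>0$, we obtain $f(\xi_0)=0$ with $f>0$ just before $\xi_0$. From $(f^{m-1})'\to-\frac{(m-1)\beta}{m}\xi_0$ we get
\begin{equation*}
f^{m-1}(\xi)\sim\frac{(m-1)\beta\,\xi_0}{m}\,(\xi_0-\xi).
\end{equation*}
Then $(f^m)'=\alpha\xi Yf\to0$, which gives \eqref{beh.int}.

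The main care point is justifying that the trajectory corresponds to $\eta\to+\infty$ with $\xi$ converging to a finite positive limit, i.e.\ that $\xi$ is monotone in $\eta$ and the integral of $X$ converges. The exponential decay of $X$ on the stable manifold, given by the Hartman--Grobman or stable manifold theorem, settles this. Compared with the system's other invariants, checking the sign bookkeeping of $Y$ and of $f'$ is routine.
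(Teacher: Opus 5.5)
Your proposal is correct and follows the paper's proof essentially step by step. It uses the same ingredients: the lower-triangular linearization with eigenvalues $-(m-1)\beta/\alpha$, $\beta/\alpha$, $-(p-1)\beta/\alpha$; invariance of the $Y$-axis to locate the unstable manifold; exponential decay of $X$ integrated via $d\xi/d\eta=\xi X$ to get $\xi\to\xi_0\in(0,\infty)$; and the limit of $(f^{m-1})'$, which yields the interface behavior and $(f^m)'(\xi_0)=0$. The only slip is the $(2,3)$ entry of your displayed Jacobian, which should be $0$, as your own computation $\partial_Z\dot Y=-X=0$ shows; this does not change the eigenvalues.
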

\begin{proof}
The linearization of the system \eqref{PSsyst} in a neighborhood of $Q_1$ has the matrix
\begin{equation*}
J(Q_{1}) = \begin{pmatrix}
-\frac{(m-1)\beta}{\alpha} & 0 & 0 \\
-1 + \frac{N\beta}{\alpha} & \frac{\beta}{\alpha} & 0 \\
0 & 0 & -\frac{(p-1)\beta}{\alpha}
\end{pmatrix},
\end{equation*}
recalling that $\alpha$ and $\beta$ are defined in \eqref{SSexp}. The eigenvalues of $J(Q_1)$ are
$$
\lambda_1=-\frac{(m-1)\beta}{\alpha}<0, \quad \lambda_2=\frac{\beta}{\alpha}>0, \quad \lambda_3=-\frac{(p-1)\beta}{\alpha}<0,
$$
with corresponding eigenvectors
$$
v_{1} = \left(1,-\frac{\sigma+2+N(m-p)}{m(m-p)},0\right), \quad v_2=(0,1,0), \quad v_3=(0,0,1).
$$
Thus, there exists a two-dimensional manifold tangent to the plane spanned by the eigenvectors $v_1$ and $v_3$ and a one-dimensional unstable manifold tangent to the eigenvector $v_2$. The uniqueness of the unstable manifold, together with the invariance of the $Y$-axis, imply that the unstable manifold is fully contained in the $Y$-axis. We analyze next the two-dimensional stable manifold. We first infer from the first equation of the system \eqref{PSsyst} that, in a neighborhood of $Q_1$, we have
$$
X(\eta)\sim Ce^{-(m-1)\beta\eta/\alpha}, \quad {\rm as} \ \eta\to\infty, \quad C>0.
$$
By applying the inverse function theorem to the implicit definition of $\eta$ in \eqref{PSvar}, we find that
$$
\frac{\xi'(\eta)}{\xi(\eta)}=X(\eta)\sim Ce^{-(m-1)\beta\eta/\alpha}, \quad {\rm as} \ \eta\to\infty,
$$
which readily gives by integration that $\xi(\eta)\to\xi_0\in(0,\infty)$ as $\eta\to\infty$. Thus, the limit $\eta\to\infty$ on the trajectories contained in the stable manifold of $Q_1$ transforms into $\xi\to\xi_0$ from the left in the original independent variable of the equation \eqref{SSODE}. Since $X(\xi)\to0$ as $\xi\to\xi_0$ from the left on the trajectories entering $Q_1$, we readily obtain by reverting \eqref{PSchange} that $f(\xi_0)=0$, while $f(\xi)>0$ in a left-neighborhood of $\xi_0$. Then we also know that
$$
\lim\limits_{\xi\to\xi_0}Y(\xi)=\lim\limits_{\xi\to\xi_0}\frac{m}{\alpha(m-1)}\xi^{-1}(f^{m-1})'(\xi)=-\frac{\beta}{\alpha},
$$
which, together with L'Hopital's rule, readily gives that
$$
\lim\limits_{\xi\to\xi_0}\frac{f^{m-1}(\xi)}{\xi_0-\xi}=-\lim\limits_{\xi\to\xi_0}(f^{m-1})'(\xi)=\frac{\beta(m-1)}{m}\xi_0,
$$
leading to the interface behavior
\begin{equation}\label{interm5}
f(\xi)\sim\left[\frac{\beta(m-1)}{m}\xi_0\right]^{1/(m-1)}(\xi_0-\xi)^{1/(m-1)}, \quad {\rm as} \ \xi\to\xi_0, \ \xi<\xi_0.
\end{equation}
It is immediate to check that the precise behavior \eqref{interm5} implies the condition $(f^m)'(\xi_0)=0$, completing the proof.
\end{proof}
It is now the turn of $Q_0$, whose analysis will be only sketched.
\begin{lemma}\label{lem.Q0}
The critical point $Q_0$ has, in the region of interest $\{(X,Y,Z)\in\real^3: X\geq0, Y\in\real, Z\geq0\}$, a three dimensional center-stable manifold composed of two-dimensional center manifolds with a stable direction of the flow and a one-dimensional stable manifold.
\end{lemma}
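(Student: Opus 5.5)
Linearizing \eqref{PSsyst} at $Q_0=(0,0,0)$ gives the Jacobian
\begin{equation*}
J(Q_0)=\begin{pmatrix} 0 & 0 & 0\\ -1 & -\frac{\beta}{\alpha} & 0\\ 0 & 0 & 0\end{pmatrix}.
\end{equation*}
Its eigenvalues are $\lambda_1=\lambda_3=0$ and $\lambda_2=-\beta/\alpha<0$. The stable direction is $e_2=(0,1,0)$, and the center eigenspace is spanned by $(1,-\alpha/\beta,0)$ and $e_3=(0,0,1)$. By the stable manifold theorem there is a unique one-dimensional stable manifold tangent to $e_2$. Since the $Y$-axis is invariant and the flow on it, $\dot Y=-Y^2-(\beta/\alpha)Y$, points towards $Q_0$ for $Y$ near $0$, this stable manifold is the $Y$-axis near the origin. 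Center manifold theory then gives a two-dimensional center manifold, tangent to the center eigenspace. Locally it is a graph $Y=h(X,Z)=-\frac{\alpha}{\beta}X+O(|(X,Z)|^2)$. Because $\lambda_2<0$ and all other eigenvalues vanish, the center manifold together with the stable fibers forms a three-dimensional center-stable manifold, which is a full neighborhood of $Q_0$ in the region of interest.

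Next I would compute the reduced flow on the center manifold to find the direction of motion there. Substituting $Y=h(X,Z)$ into the first and third equations gives, to leading order,
\begin{equation*}
\dot X=X\Big[-\frac{(m-1)\alpha}{\beta}X-2X\Big]+\dots=-\frac{(m-1)\alpha+2\beta}{\beta}X^2+\dots,\qquad \dot Z=Z\Big[-\frac{(p-1)\alpha}{\beta}+\sigma\Big]X+\dots.
\end{equation*}
For $X>0$ small, $\dot X<0$, since $\alpha,\beta>0$. In the $Z$-equation the coefficient $\sigma-(p-1)\alpha/\beta$ is negative: $\sigma<0$ and $\alpha/\beta>0$. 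So for $X>0$, $Z$ decreases along the flow.

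Two special cases need a check. On the plane $X=0$ the center manifold contains the line of critical points $Q_\gamma$. There the dynamics is trivial, and this line is exactly what makes the center manifold two-dimensional rather than one. On the plane $Z=0$ the reduced flow is $\dot X\sim -cX^2$, so $X\to0$. Putting these together, orbits on the center manifold with $X>0$ approach $Q_0$ as $\eta\to\infty$. These are the ``center manifolds with a stable direction of the flow'' of the statement. I would note that the reduced center manifold is non-unique but foliated, which is why the statement speaks of center manifolds in the plural.

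The main obstacle is making the reduced-flow claim rigorous. The center manifold contains a line of equilibria, so a naive leading-order analysis could be spoiled by terms like $XZ$ in $h$. I would handle this by computing $h$ to second order, including the $XZ$ coefficient. Both $\dot X$ and $\dot Z$ carry the factor $X$ (respectively $XZ$), so in the region $X\geq0$, $Z\geq0$ the sign is determined by the bracket evaluated at $Y=h$. That bracket is $-\frac{(m-1)\alpha+2\beta}{\beta}X+O(X^2+XZ)$ for $\dot X$, which has a definite sign near $Q_0$. Since the paper only uses $Q_0$ to rule it out, I would present this part as a sketch.
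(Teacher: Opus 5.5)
Your proposal is correct and follows the paper's route. You linearize at $Q_0$, reduce to a center manifold tangent to the span of $(1,-\alpha/\beta,0)$ and $(0,0,1)$ (the paper uses the variable $W=\frac{\beta}{\alpha}Y+X$ instead of a graph $Y=h(X,Z)$), and read off $\dot X\approx-\frac{1}{\beta}X^2$, $\dot Z\approx-\frac{1}{\beta}XZ$; indeed your coefficients $\frac{(m-1)\alpha+2\beta}{\beta}$ and $\frac{(p-1)\alpha}{\beta}-\sigma$ both equal $\frac{1}{\beta}$.

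One correction to your rigor plan. The terms that could spoil the signs are pure powers $Z^k$ in $h$, not $XZ$. These are excluded not by a second-order computation but by your own observation that the line of equilibria $Q_\gamma$ lies on every center manifold. Hence $h(0,Z)\equiv0$ and both brackets carry a factor $X$, which makes explicit a point the paper's $O(|(X,Z)|^3)$ remainders leave implicit.
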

\begin{proof}
The linearization of the system in a neighborhood of $Q_0$ has the matrix
\begin{equation*}
J(Q_{0}) = \begin{pmatrix}
0 & 0 & 0 \\
-1 & -\frac{\beta}{\alpha} & 0 \\
0 & 0 & 0
\end{pmatrix},
\end{equation*}
with a one-dimensional stable manifold (corresponding to the second eigenvalue and the eigenvector $(0,1,0)$) and two-dimensional center manifolds (recalling that a center manifold might not be unique). In order to study the flow of the system \eqref{PSsyst} on the center manifolds, we follow the theory in \cite[Chapter 2]{Carr}. Thus, following \cite[Theorem 3, Section 2.5]{Carr}, we can approximate the center manifolds by setting
$$
W:=\frac{\beta}{\alpha}Y+X, \quad {\rm that \ is,} \quad Y=\frac{\alpha}{\beta}(W-X),
$$
and write the system \eqref{PSsyst} in variables $(X,W,Z)$ as follows:
\begin{equation}\label{PSsystQ0}
\left\{\begin{array}{ll}\dot{X}=-\frac{1}{\beta}X^2+\frac{(m-1)\alpha}{\beta}XW,\\[1mm]
\dot{W}=-\frac{\beta}{\alpha}W-\frac{\beta}{\alpha}XZ+\frac{(N-2)\beta-m\alpha}{\beta}X^2-\frac{N\beta-(m+1)\alpha}{\beta}XW-\frac{\alpha}{\beta}W^2,\\[1mm]
\dot{Z}=-\frac{1}{\beta}XZ+\frac{(p-1)\alpha}{\beta}ZW,\end{array}\right.
\end{equation}
where in the first and third equations we took into account that
$$
\frac{(m-1)\alpha}{\beta}+2=\frac{(p-1)\alpha}{\beta}-\sigma=\frac{1}{\beta}.
$$
Following the general strategy in \cite[Theorem 3, Section 2.5]{Carr}, one can derive a further approximation of the center manifolds up to higher orders. However, the flow on any center manifold is given by the reduced system obtained by keeping the dominating terms in the first and third equations of the system \eqref{PSsystQ0}, which yields
\begin{equation}\label{PSsystQ0r}
\left\{\begin{array}{ll}\dot{X}=-\frac{1}{\beta}X^2+O(|(X,Z)|^3),\\[1mm]
\dot{Z}=-\frac{1}{\beta}XZ+O(|(X,Z)^3|).\end{array}\right.
\end{equation}
It is obvious from the positivity of $\beta$, $X$ and $Z$ that the flow of this system in a neighborhood of its origin $(X,Z)=(0,0)$ goes in the stable direction, completing the proof.
\end{proof}

\noindent \textbf{Remark.} An analogous analysis as in the proof of Lemma \ref{lem.Q1}, based on an integration of the system \eqref{PSsystQ0r}, shows that the profiles corresponding to trajectories entering $Q_0$ have the behavior
$$
\lim\limits_{\xi\to\infty}\xi^{-(\sigma+2)/(m-p)}f(\xi)=K\in(0,\infty),
$$
that is, similar to the tail of the stationary solution defined in \eqref{stat.sol}.

\subsection{Critical points at infinity}

In order to complete the understanding of the global dynamics of the system \eqref{PSsyst}, we have to analyze the critical points at infinity by compactifying the phase space via the Poincar\'e hypersphere transformation (following the methodology given in \cite[Section 3.10]{Pe}). We thus introduce the projective variables $(\overline{X}, \overline{Y}, \overline{Z}, W)$ defined by
\begin{equation*}
X =\frac{\overline{X}}{W}, \quad Y =\frac{\overline{Y}}{W}, \quad Z =\frac{\overline{Z}}{W}.
\end{equation*}
The critical points at infinity lie on the equator of the Poincar\'e hypersphere (where $W=0$), defined by $\overline{X}^2 + \overline{Y}^2 + \overline{Z}^2 = 1$. Moreover, they correspond to the solutions of the following algebraic system:
\begin{equation}\label{gensyst}
\begin{cases}
\overline{X}\overline{Q}(\overline{X},\overline{Y},\overline{Z}) - \overline{Y}\overline{P}(\overline{X},\overline{Y},\overline{Z}) &= 0, \\
\overline{X}\overline{R}(\overline{X},\overline{Y},\overline{Z}) - \overline{Z}\overline{P}(\overline{X},\overline{Y},\overline{Z}) &= 0, \\
\overline{Y}\overline{R}(\overline{X},\overline{Y},\overline{Z}) - \overline{Z}\overline{Q}(\overline{X},\overline{Y},\overline{Z}) &= 0,
\end{cases}
\end{equation}
where $\overline{P}$, $\overline{Q}$ and $\overline{R}$ are the homogeneous quadratic polynomials contained in the original vector field:
\begin{align*}
\overline{P}(\overline{X},\overline{Y},\overline{Z}) &= \overline{X}[(m-1)\overline{Y}-2\overline{X}], \\
\overline{Q}(\overline{X},\overline{Y},\overline{Z}) &= -\overline{Y}^2-N\overline{X}\overline{Y}-\overline{X}\overline{Z}, \\
\overline{R}(\overline{X},\overline{Y},\overline{Z}) &= \overline{Z}[(p-1)\overline{Y}+\sigma\overline{X}].
\end{align*}
By replacing the particular forms of $\overline{P}$, $\overline{Q}$, $\overline{R}$ in the system \eqref{gensyst}, taking into account the equator condition and solving the resulting system, we obtain three critical points with a nonzero $\overline{X}$ coordinate
\begin{equation}\label{critinf}
\begin{split}
&P_0=(1,0,0,0), \quad P_1=\left(\frac{m}{\sqrt{m^2+(N-2)^2}},-\frac{N-2}{\sqrt{m^2+(N-2)^2}},0,0\right), \\
&P_2=\left(\frac{(m-p)^2}{A},\frac{(\sigma+2)(m-p)}{A},-\frac{(\sigma+2)[m(N+\sigma)-p(N-2)]}{A},0\right),
\end{split}
\end{equation}
where
$$
A:=\sqrt{(m-p)^2+(\sigma+2)^2(m-p)^2+(\sigma+2)^2[m(N+\sigma)-p(N-2)]^2},
$$
and other three critical points with $\overline{X}=0$, namely
$$
Q_2=(0,1,0,0), \quad Q_3=(0,-1,0,0), \quad Q_4=(0,0,1,0).
$$
The analysis of the critical points with a nonzero $\overline{X}$ coordinate follows from an application of \cite[Theorem 5(a), Section 3.10]{Pe}, thus they are topologically equivalent to finite critical points of the system obtained from \eqref{PSsyst} by implementing the change of variables
\begin{equation*}
x=\frac{1}{X}, \quad y=\frac{Y}{X}, \quad z=\frac{Z}{X}.
\end{equation*}
We then obtain after direct calculations that the system \eqref{PSsyst} is mapped, in variables $(x,y,z)$, into the system
\begin{equation}\label{PSsyst2}
\left\{\begin{array}{ll}\frac{dx}{d\theta}=x(2-(m-1)y), \\ \frac{dy}{d\theta}=-x-(N-2)y-z-my^2-\frac{\beta}{\alpha}xy, \\ \frac{dz}{d\theta}=z(\sigma+2+(p-m)y),\end{array}\right.
\end{equation}
where the new independent variable is $\theta=\ln\,\xi$. Thus, the critical points $P_0$, $P_1$ and $P_2$ listed in \eqref{critinf} are identified with the critical points
$$
P_0=(0,0,0), \quad P_1=\left(0,-\frac{N-2}{m},0\right), \quad P_2=\left(0,\frac{\sigma+2}{m-p},-\frac{(\sigma+2)(N-2)(p_c(\sigma)-p)}{(m-p)^2}\right),
$$
noticing that $P_2$ exists in our domain of interest with $z\geq0$ only when $p<p_c(\sigma)$, which is equivalent to the conditions \eqref{pscond}. The key critical point is $P_1$, while we also analyze $P_0$ only to discard it in later arguments. We do not study the critical point $P_2$ here, as it will not play any role in the forthcoming analysis.
\begin{lemma}\label{lem.P1}
Let $m$, $p$, $N$, $\sigma$ be as in \eqref{range.exp} and assume that the inequalities \eqref{pscond} are satisfied. Then $P_1$ is an unstable node. The trajectories going out of $P_1$ correspond to profiles with a vertical asymptote at $\xi=0$ of the form
\begin{equation}\label{beh.asympt}
\lim\limits_{\xi\to0}\xi^{(N-2)/m}f(\xi)=C\in(0,\infty).
\end{equation}
If $p>p_c(\sigma)$ (where $p_c(\sigma)$ is defined in \eqref{pscond}), the trajectories of the critical point $P_1$ are fully contained in the invariant planes $x=0$ and $z=0$.
\end{lemma}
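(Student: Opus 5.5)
The plan is to linearize system \eqref{PSsyst2} at $P_1=(0,-(N-2)/m,0)$, read off the signs of the eigenvalues, and then undo the change of variables on the trajectories leaving $P_1$ to recover the profiles.

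\emph{Linearization.} Set $y_1=-(N-2)/m$. The Jacobian of \eqref{PSsyst2} at $P_1$ is lower triangular in the ordering $(x,y,z)$, since $\dot x$ and $\dot z$ vanish identically on $x=0$ and $z=0$ respectively. Its diagonal entries are the eigenvalues:
$$
\lambda_1=2-(m-1)y_1=\frac{m+(m-1)(N-2)}{m}>0,\qquad
\lambda_2=-(N-2)-2my_1=N-2>0,
$$
$$
\lambda_3=\sigma+2+(p-m)y_1=\sigma+2+\frac{(m-p)(N-2)}{m}=\frac{(N-2)(p_c(\sigma)-p)}{m}.
$$
For $\lambda_3$, the last identity comes from $m(\sigma+2)+(m-p)(N-2)=m(N+\sigma)-p(N-2)$. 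Hence, under \eqref{pscond} all three eigenvalues are positive and $P_1$ is an unstable node. If $p>p_c(\sigma)$, then $\lambda_3<0$ and $P_1$ is a saddle. Its two-dimensional unstable manifold is tangent to the span of the eigenvectors for $\lambda_1$ and $\lambda_2$, both of which lie in $\{z=0\}$. Since the plane $z=0$ is invariant, uniqueness of the unstable manifold places it inside $\{z=0\}$. The one-dimensional stable manifold is tangent to the $z$-direction, and by invariance of the plane $x=0$ it lies inside $\{x=0\}$. This proves the last claim.

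\emph{Asymptotics at $\xi=0$.} Along a trajectory leaving $P_1$ we have $\theta=\ln\xi\to-\infty$, so $\xi\to0$. Undoing the change of variables,
$$
y=\frac{Y}{X}=\frac{\xi f'(\xi)}{f(\xi)}=\frac{d\ln f}{d\theta}\to-\frac{N-2}{m}.
$$
Moreover, the convergence to $P_1$ is exponential in $\theta$: $|y-y_1|=O(e^{\mu\theta})$ for some $\mu>0$, namely the smallest positive eigenvalue involved, by the Hartman--Grobman theorem or standard node asymptotics. Therefore
$$
\frac{d}{d\theta}\ln\bigl(\xi^{(N-2)/m}f\bigr)=y+\frac{N-2}{m}=O(e^{\mu\theta}),
$$
which is integrable at $\theta=-\infty$. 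It follows that $\ln(\xi^{(N-2)/m}f)$ has a finite limit, and this gives \eqref{beh.asympt} with $C\in(0,\infty)$.

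\emph{Consistency and main obstacle.} As a check, $x=1/X=\alpha\xi^2f^{1-m}/m\sim c\,\xi^{2+(N-2)(m-1)/m}=c\,e^{\lambda_1\theta}$, so $x\to0$ as required, and similarly $z\sim e^{\lambda_3\theta}$. The main delicacy is the exponential convergence rate needed to make the error integrable. For a hyperbolic node this is standard, since all orbits approach $P_1$ at a rate controlled by the eigenvalues. One should also note that a trajectory leaving $P_1$ tangent to the $y$-eigendirection inside $x=0$ is not a genuine profile, because it requires $X=\infty$. Only trajectories with $x>0$ are relevant, and the asymptotics above apply to them.
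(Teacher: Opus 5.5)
Your proposal is correct. It shares the paper's skeleton: the same linearization at $P_1$ with the eigenvalues read off the diagonal, the same invariance-plus-uniqueness argument placing the stable and unstable manifolds in $\{x=0\}$ and $\{z=0\}$ when $p>p_c(\sigma)$, and the same identification $y=\xi f'/f\to-(N-2)/m$ as $\theta=\ln\xi\to-\infty$.

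You genuinely differ from the paper in the last step. The paper first uses $z=\xi^{\sigma+2}f^{p-m}\to0$ to get $f\to\infty$ and then applies L'H\^opital. That only yields $\ln f/\ln\xi\to-(N-2)/m$, i.e. $f(\xi)=\xi^{-(N-2)/m+o(1)}$, which does not rule out, say, logarithmic corrections. Your observation is that
\[
\frac{d}{d\theta}\ln\bigl(\xi^{(N-2)/m}f\bigr)=y+\frac{N-2}{m}=O(e^{\mu\theta})
\]
is integrable at $-\infty$. This is what actually produces a finite positive constant $C$ in \eqref{beh.asympt}, and it makes the separate $f\to\infty$ step unnecessary. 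The price is the exponential approach rate. Justify it with an adapted-norm estimate near a hyperbolic source, $\frac{d}{d\theta}|w|\ge(\lambda_{\min}-\varepsilon)|w|$ with $w$ the displacement from $P_1$, rather than with Hartman--Grobman. As usually stated, Hartman--Grobman gives only a homeomorphism, which does not by itself transport rates.

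There are a few computational slips, none of which affects the conclusions:
\begin{itemize}
\item $\lambda_1=2+\frac{(m-1)(N-2)}{m}=\frac{N(m-1)+2}{m}$, not $\frac{m+(m-1)(N-2)}{m}$. Your own consistency check $x\sim c\,e^{\lambda_1\theta}$ in fact uses the correct value.
\item The Jacobian is not lower triangular in the order $(x,y,z)$, because the entry $\partial_z\dot y=-1$ sits above the diagonal. It is triangular in the order $(x,z,y)$, and in any case rows one and three being purely diagonal already makes the diagonal entries the eigenvalues.
\item For $p>p_c(\sigma)$ the stable eigenvector is $(0,1,N-2-\lambda_3)$, not the $z$-axis. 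It still lies in $\{x=0\}$, so your invariance argument stands.
\end{itemize}
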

\begin{proof}
The linearization of the system \eqref{PSsyst2} in a neighborhood of $P_1$ has the matrix
\begin{equation*}
J(P_{1}) = \begin{pmatrix}
\frac{N(m-1)+2}{m} & 0 & 0 \\
-\frac{m(N+\sigma)-p(N-2)}{m(\sigma+2)} & N-2 & -1 \\
0 & 0 & \frac{m(N+\sigma)-p(N-2)}{m}
\end{pmatrix},
\end{equation*}
with two positive eigenvalues in any case and the third eigenvalue being positive for $p<p_c(\sigma)$ and negative for $p>p_c(\sigma)$. We then observe that, if $p<p_c(\sigma)$, the critical point $P_1$ is an unstable node, while if $p>p_c(\sigma)$ the point becomes a saddle with a two-dimensional unstable manifold fully included in the invariant plane $z=0$ and a one-dimensional stable manifold fully included in the invariant plane $x=0$ (the proof of the inclusion follows from the uniqueness of both the stable and the unstable manifold, see for example \cite[Theorem 3.2.1]{GH}). The proof of the stability is thus complete. 

Regarding the behavior of the profiles, we observe that
$$
\lim\limits_{\theta\to-\infty}y(\theta)=\lim\limits_{\theta\to\infty}\frac{Y(\theta)}{X(\theta)}=-\frac{N-2}{m}.
$$
Taking into account that $\theta=\ln\,\xi$, we find that
\begin{equation}\label{interm6}
\lim\limits_{\xi\to0}\frac{\xi f'(\xi)}{f(\xi)}=\lim\limits_{\xi\to0}\frac{(\ln\,f)'(\xi)}{(\ln\,\xi)'}=-\frac{N-2}{m}.
\end{equation}
Since
$$
z(\xi)=\frac{Z(\xi)}{X(\xi)}=\xi^{\sigma+2}f(\xi)^{p-m}\to0, \quad {\rm as} \ \xi\to0,
$$
the negativity of $\sigma+2$ and of $p-m$ implies that $f(\xi)\to\infty$ as $\xi\to0$. We can thus apply L'Hopital's rule in \eqref{interm6} to deduce the local behavior \eqref{beh.asympt}, as claimed.
\end{proof}
The analysis of the critical point $P_0$ is immediate.
\begin{lemma}\label{lem.P0}
The critical point $P_0$ of the system \eqref{PSsyst2} is a saddle point with a two-dimensional stable manifold fully contained in the invariant plane $x=0$ and a one-dimensional unstable manifold fully contained in the invariant plane $z=0$.
\end{lemma}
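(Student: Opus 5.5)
Since $P_0$ is the origin of the system \eqref{PSsyst2}, I would linearize \eqref{PSsyst2} at $(x,y,z)=(0,0,0)$ and read off the eigenvalues. Dropping the quadratic terms $-(m-1)xy$, $-my^2$, $-(\beta/\alpha)xy$ and $(p-m)yz$, the Jacobian is
$$
J(P_0)=\begin{pmatrix} 2 & 0 & 0\\ -1 & -(N-2) & -1\\ 0 & 0 & \sigma+2\end{pmatrix},
$$
which is lower block triangular with eigenvalues $\lambda_1=2>0$, $\lambda_2=-(N-2)<0$ (here $N\geq3$ is used) and $\lambda_3=\sigma+2<0$ (here $\sigma<-2$ is used). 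The eigenvalues are nonzero and of mixed sign, so $P_0$ is a hyperbolic saddle with a two-dimensional stable manifold and a one-dimensional unstable manifold. For later use I would record the eigenvectors: $(0,1,0)$ for $\lambda_2$; $(0,1,\sigma+N)$ for $\lambda_3$, up to scaling, obtained from $-(N-2)v_2-v_3=(\sigma+2)v_2$; and, for $\lambda_1=2$, a vector $(1,-1/N,0)$ from $-1-(N-2)v_2=2v_2$.

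Next, to locate the manifolds, I would use the invariance of the coordinate planes. The plane $\{x=0\}$ is invariant because $\dot x$ is proportional to $x$. Restricted to it, the linearization has eigenvalues $-(N-2)$ and $\sigma+2$, both negative, so the origin is a stable node inside this plane. Every trajectory of the plane starting near $P_0$ then converges to $P_0$, giving a two-dimensional invariant set of trajectories entering $P_0$ that is tangent to the stable eigenspace $\mathrm{span}\{(0,1,0),(0,1,\sigma+N)\}=\{x=0\}$. By uniqueness of the local stable manifold of a hyperbolic point (as in \cite[Theorem 3.2.1]{GH}, already used in the proof of Lemma \ref{lem.P1}), the stable manifold coincides locally with the plane $x=0$, and hence globally, because it is invariant.

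Similarly, the plane $\{z=0\}$ is invariant because $\dot z$ is proportional to $z$. Restricted to it, $P_0$ is a planar saddle with eigenvalues $2$ and $-(N-2)$, so the plane contains a one-dimensional unstable curve tangent to $(1,-1/N,0)$. Again by uniqueness, this curve is the full unstable manifold of $P_0$ in $\real^3$, so it lies in $\{z=0\}$.

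The only step needing any care is this uniqueness argument, which forces the invariant manifolds into the coordinate planes; it is standard. The rest is the triangular eigenvalue computation, where the sign conditions $N\geq3$ and $\sigma<-2$ enter.
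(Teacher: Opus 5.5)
Your proposal is correct and follows the paper's proof. You use the same Jacobian with eigenvalues $2$, $-(N-2)<0$, $\sigma+2<0$, and the same uniqueness of the stable and unstable manifolds to place them in the invariant planes $x=0$ and $z=0$; you just spell this out through the in-plane stable node and saddle.

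Two harmless slips:
\begin{itemize}
\item The eigenvector for $\sigma+2$ is $(0,1,-(\sigma+N))$, equivalently the paper's $(0,-1,N+\sigma)$, not $(0,1,\sigma+N)$. This does not affect the argument, since the stable eigenspace is still $\{x=0\}$.
\item Invariance gives only that the global stable manifold is contained in $\{x=0\}$, not that it equals this plane. Containment is exactly what the lemma asserts.
\end{itemize}
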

\begin{proof}
Since the linearization of the system \eqref{PSsyst2} in a neighborhood of $P_0$ has the matrix
\begin{equation*}
J(P_{0}) = \begin{pmatrix}
2 & 0 & 0 \\
-1 & -(N-2) & -1 \\
0 & 0 & \sigma+2
\end{pmatrix}
\end{equation*}
and that we assume $N\geq3$ and $\sigma<-2$, the conclusion follows from the signs of the three eigenvalues $\lambda_1=2$, $\lambda_2=-(N-2)<0$ and $\lambda_3=\sigma+2<0$, the form of their corresponding eigenvectors
$$
v_1=(N,-1,0), \quad v_2=(0,1,0), \quad v_3=(0,-1,N+\sigma),
$$
and the uniqueness of the stable and unstable manifold \cite[Theorem 3.2.1]{GH}.
\end{proof}
In order to analyze the critical points $Q_2$ and $Q_3$, one should introduce the following variables
\begin{equation}\label{varY}
\overline{x}=\frac{X}{Y}, \quad \overline{y}=\frac{1}{Y}, \quad \overline{z}=\frac{Z}{Y},
\end{equation}
and deduce, according to \cite[Theorem 5(b), Section 3.10]{Pe}, that the flow of the system \eqref{PSsyst} in a neighborhood of these critical points is topologically equivalent to the flow in a neighborhood of the origin in the system transformed in the variables \eqref{varY}, that is
$$
\left\{\begin{array}{ll}\pm\overline{x}'=-\overline{x}\left[m+(N-2)\overline{x}+\overline{x}\overline{y}+\frac{\beta}{\alpha}\overline{y}+\overline{x}\overline{z}\right],\\
\pm\overline{y}'=-\overline{y}\left[1+N\overline{x}+\overline{x}\overline{y}+\frac{\beta}{\alpha}\overline{y}+\overline{x}\overline{z}\right],\\
\pm\overline{z}'=-\overline{z}\left[p+(\sigma+N)\overline{x}+\overline{x}\overline{y}+\frac{\beta}{\alpha}\overline{y}+\overline{x}\overline{z}\right],
\end{array}\right.
$$
where the minus sign in front of the derivatives corresponds to $Q_2$ and the plus sign in front of the derivatives corresponds to $Q_3$. We then have
\begin{lemma}\label{lem.Q23}
The critical point $Q_2$ is an unstable node and the critical point $Q_3$ is a stable node. The trajectories stemming from the unstable node $Q_2$ correspond to profiles such that there is $\xi_0\in(0,\infty)$ and $\delta>0$ with
\begin{equation}\label{beh.Q2}
f(\xi_0)=0, \qquad f(\xi)>0 \ {\rm for} \ \xi\in(\xi_0,\xi_0+\delta), \qquad (f^m)'(\xi_0)>0.
\end{equation}
The trajectories entering the stable node $Q_3$ correspond to profiles having a compact support such that there is $\xi_0\in(0,\infty)$ and $\delta\in(0,\xi_0)$ with
\begin{equation}\label{beh.Q3}
f(\xi_0)=0, \qquad f(\xi)>0 \ {\rm for} \ \xi\in(\xi_0-\delta,\xi_0), \qquad (f^m)'(\xi_0)<0.
\end{equation}
\end{lemma}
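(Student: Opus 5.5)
The proof splits into two parts. First we show that the origin of the transformed system is a node, unstable for $Q_2$ and stable for $Q_3$. Then we translate the behavior of trajectories near these points back into properties of the profile $f$.

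\textbf{Stability.} In the variables \eqref{varY} the linearization of the system at $(\overline{x},\overline{y},\overline{z})=(0,0,0)$ is diagonal, since every right-hand side has the form $-\overline{x}[\cdots]$, $-\overline{y}[\cdots]$, $-\overline{z}[\cdots]$.
\begin{itemize}
\item With the minus sign in front of the derivatives ($Q_2$), the eigenvalues are $m$, $1$, $p$, all positive. Hence $Q_2$ is an unstable node.
\item With the plus sign ($Q_3$), the eigenvalues are $-m$, $-1$, $-p$. Hence $Q_3$ is a stable node.
\end{itemize}
Because the linear part is diagonal with nonzero eigenvalues, the Hartman--Grobman theorem (or directly \cite[Theorem 5(b), Section 3.10]{Pe}) settles the local dynamics.

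\textbf{Profile behavior near $Q_2$.} Near $Q_2$ we have $Y\to+\infty$, with $X/Y\to0$ and $Z/Y\to0$. I would work in the original variable $\xi$ rather than $\eta$, using the definitions \eqref{PSchange}:
$$
\frac{Y}{X}=\frac{\xi f'(\xi)}{f(\xi)}, \qquad \frac{1}{Y}=\frac{\alpha\xi}{m f^{m-2}(\xi)f'(\xi)}=\frac{\alpha\xi f(\xi)}{(f^m)'(\xi)}.
$$
Along a trajectory leaving $Q_2$ the ratio $Y/X$ tends to $+\infty$. This means $\xi f'/f\to+\infty$, so the logarithmic derivative of $f$ blows up.

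To identify the $\xi$-interval, note that the linearized flow gives $\overline{x}\sim Ce^{m s}$ and $\overline{y}\sim C'e^{s}$ as $s\to-\infty$. Here $s$ is the time variable of the transformed system, related to $\eta$ through $d\eta/ds=\overline{y}$ up to sign. Combining this with $d\ln\xi/d\eta=X=\overline{x}/\overline{y}$ gives
$$
\frac{d\ln\xi}{ds}=\pm\,\overline{x}\sim e^{ms},
$$
which is integrable as $s\to-\infty$. Therefore $\xi\to\xi_0\in(0,\infty)$. Since $\xi$ is monotone in $\eta$ while $\eta$ runs backward along the trajectory, the profile is defined on a right neighborhood $(\xi_0,\xi_0+\delta)$. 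The orientation of the sign has to be checked carefully.

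Next, $X=\overline{x}/\overline{y}\sim e^{(m-1)s}\to0$, so $\xi^{-2}f^{m-1}\to0$ and hence $f(\xi_0)=0$. The key quantity is
$$
(f^m)'=\alpha\xi f Y=\alpha\xi\frac{f}{\overline{y}}.
$$
From $X=\frac{m}{\alpha}\xi^{-2}f^{m-1}$ we get $f\sim\left(\frac{\alpha}{m}\xi_0^2X\right)^{1/(m-1)}\sim c\,e^{s}$, and also $\overline{y}\sim C'e^{s}$. Their ratio therefore tends to a finite positive constant, so
$$
(f^m)'(\xi_0)=\lim\alpha\xi f/\overline{y}\in(0,\infty).
$$

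\textbf{Profile behavior near $Q_3$.} The computation is identical with reversed time, and now $Y\to-\infty$. This yields a profile defined on a left neighborhood of $\xi_0$ with $f(\xi_0)=0$ and $(f^m)'(\xi_0)<0$. The vanishing together with a nonzero flux at $\xi_0$ means the profile is compactly supported in the sense of \eqref{beh.Q3}.

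\textbf{Main obstacle.} The delicate step is the ratio $f/\overline{y}$ tending to a positive finite limit. It requires the exact exponential rates $\overline{x}\sim e^{ms}$ and $\overline{y}\sim e^{s}$, which hold for generic trajectories. A trajectory tangent to another eigendirection (for instance one with $\overline{x}$ decaying faster) only changes constants and keeps the limit positive. Making this exponent bookkeeping rigorous is where the care is needed.
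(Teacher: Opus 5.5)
Your proposal is correct and is essentially the argument the paper has in mind: the paper omits the details, referring to the local analysis of the origin of the system in the variables \eqref{varY} as in \cite[Lemma 3.3]{IS25}, and your steps match it (eigenvalues $\pm m,\pm 1,\pm p$, integrability of $d\ln\xi/ds=\pm\overline{x}$ giving $\xi\to\xi_0\in(0,\infty)$, $X\to0$ giving $f(\xi_0)=0$, and the finite nonzero limit of $(f^m)'=\alpha\xi f/\overline{y}$). The obstacle you flag is not real, and your parenthetical about it is wrong as stated: each equation has the form $\overline{x}_i'=\overline{x}_i[\lambda_i+h_i]$ with $h_i$ vanishing at the origin, hence exponentially small along any trajectory converging to it, so every trajectory with $\overline{x}\,\overline{y}\neq0$ satisfies $\overline{x}\sim Ce^{ms}$ and $\overline{y}\sim C'e^{s}$ with $C,C'\neq0$ (equivalently $|\overline{x}|\sim C''|\overline{y}|^{m}$), so no non-generic case arises --- whereas a faster decay of $\overline{x}$ would actually force $f/\overline{y}\to0$, not keep the limit positive.
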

The proof is based on the local analysis of the origin in the previous system, which is very similar to the one given in detail in, for example, \cite[Lemma 3.3]{IS25} (to which the reader is referred) and is thus omitted here. Let us note that the profiles corresponding to the local behaviors \eqref{beh.Q2} and \eqref{beh.Q3} do not satisfy the contact condition $(f^m)'(\xi_0)=0$ required for a weak solution of the porous medium equation at the interface point, but they are (if extended by zero outside their support) subsolutions to Eq. \eqref{eq0}.

\subsection{Small self-similar subsolutions to Eq. \eqref{eq0}}

In this section, small self-similar subsolutions in the form \eqref{SSS} are constructed. According to Lemma \ref{lem.Q23}, it is sufficient to identify trajectories connecting the critical points $Q_2$ and $Q_3$. 
\begin{proposition}\label{prop.subs}
There is at least one subsolution $\tilde{u}$ to Eq. \eqref{eq0} in the self-similar form \eqref{SSS} whose profile is compactly supported, that is, ${\rm supp}\,f=[\xi_1,\xi_2]\subset(0,\infty)$.
\end{proposition}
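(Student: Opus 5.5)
Following the hint given before the statement, the plan is to exhibit a trajectory of the system \eqref{PSsyst} that leaves the unstable node $Q_2$ and enters the stable node $Q_3$. By Lemma \ref{lem.Q23}, such a profile $f$ vanishes at some $\xi_1>0$ with $(f^m)'(\xi_1)>0$, and vanishes again at some $\xi_2>\xi_1$ with $(f^m)'(\xi_2)<0$. Extending it by zero outside $[\xi_1,\xi_2]$ gives a function $\tilde u(x,t)=t^{-\alpha}f(|x|t^{-\beta})$ that is a classical solution of Eq. \eqref{eq0} inside its positivity set. At the two interfaces the flux $(f^m)'$ jumps upward when seen from outside: from $0$ to a positive value at $\xi_1$, and from a negative value to $0$ at $\xi_2$. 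This upward jump means $\Delta\tilde u^m$ carries a nonnegative singular measure on the interfaces, which is exactly the subsolution inequality in the weak sense of Definition \ref{def.weak}. Carrying this out only requires integrating by parts on $\{\xi_1<|x|t^{-\beta}<\xi_2\}$ against nonnegative test functions.

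The main task is the existence of the connection. I would work in the invariant plane $Z=0$ first and then perturb, or better, use a topological (shooting) argument in the full space. The set of trajectories leaving $Q_2$ is an open three-parameter family, since $Q_2$ is a node. It is parametrized, for instance, by the point $\xi_1$ and the slope $(f^m)'(\xi_1)=a>0$; by the scaling of the ODE near the interface, one effectively has a one-parameter family in $(\xi_1,a)$. I would classify the possible $\omega$-limits of these orbits:
\begin{itemize}
  \item $Q_3$, which is the desired outcome (change of sign with negative slope);
  \item $Q_1$ (interface with zero flux);
  \item $Q_0$ (a positive tail $\sim\xi^{(\sigma+2)/(m-p)}$);
  \item $P_0$ and $P_1$, which are discarded: Lemma \ref{lem.P1} shows $P_1$ is a source, and Lemma \ref{lem.P0} shows the stable manifold of $P_0$ lies in $x=0$.
\end{itemize}
Since $Q_3$ is a stable node, the set of parameters whose orbit enters $Q_3$ is open.

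Next I would show this set is nonempty, using a limit regime in the parameters. Take $\xi_1$ large, so that in the rescaled variable near $\xi_1$ the term $\beta\xi f'$ dominates the reaction $\xi^\sigma f^p$ (small since $\sigma<0$). The profile then behaves like that of the porous medium equation with transport $\alpha f+\beta\xi f'$ (with $\alpha,\beta>0$), and $f$ must come back down. Integrating \eqref{SSODE} as $(\xi^{N-1}(f^m)')'=-\xi^{N-1}(\alpha f+\beta\xi f'+\xi^\sigma f^p)$ shows the flux decreases, so $f$ reaches zero again at some finite $\xi_2$. Equivalently, for $a$ small and $\xi_1$ large, the orbit stays near the plane $Z=0$ and $X$ small. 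Along it, $\dot Y\le -Y^2-\frac{\beta}{\alpha}Y-X<0$, which drives $Y$ to $-\infty$, i.e. to $Q_3$. This needs a careful check to exclude approach to $Q_1$: the orbits into $Q_1$ form its two-dimensional stable manifold, a codimension-one set, so it suffices to show we are not on it. Showing that $Y$ actually becomes more negative than $-\beta/\alpha$ while $X$ is still positive would suffice.

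The main obstacle is this last point: rigorously excluding that the chosen orbit lands on the stable manifold of $Q_1$ or on $Q_0$. I would first try an explicit monotonicity region. Take a region of the form $\{Y<-\beta/\alpha-\epsilon, X>0\}$, verify that it is positively invariant using the second equation of \eqref{PSsyst}, and show that the orbit enters it. Once inside, $Y\to-\infty$, giving convergence to $Q_3$ and hence the compactly supported profile with ${\rm supp}\,f=[\xi_1,\xi_2]\subset(0,\infty)$.
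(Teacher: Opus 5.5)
There is a genuine gap, in the nonemptiness step: the regime you choose produces orbits that end at $Q_0$, not at $Q_3$. Your treatment of the interfaces is fine: the upward jumps of $(f^m)'$ at $\xi_1$ and $\xi_2$ give $\Delta\tilde u^m$ a nonnegative singular part. The openness of the set of orbits entering the stable node $Q_3$ is also fine.

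\textbf{Why your regime fails.} Near the interface, $f^m\approx a(\xi-\xi_1)$ gives
\[
\lim XY^{m-1}=m\alpha^{-m}a^{m-1}\xi_1^{-(m+1)}, \qquad \lim ZY^{p-1}=\alpha^{-p}a^{p-1}\xi_1^{\sigma-p+1}.
\]
So $\xi_1\to\infty$, $a\to0$ means the orbit leaves $Q_2$ close to the invariant $Y$-axis.
\begin{itemize}
\item On that axis $\dot Y=-Y(Y+\beta/\alpha)$, and the orbit out of $Q_2$ stops at $Y=0$, i.e. at $Q_0$.
\item Nearby orbits therefore enter a small neighbourhood of $Q_0$ with $X>0$. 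There Lemma \ref{lem.Q0} captures them ($\dot X\approx-X^2/\beta$, $\dot Z\approx-XZ/\beta$).
\item The resulting profiles are positive for all $\xi>\xi_1$, with tail $\sim\xi^{(\sigma+2)/(m-p)}$, so they are not compactly supported.
\end{itemize}

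Several of your specific steps break accordingly.
\begin{itemize}
\item Your inequality $\dot Y\le -Y^2-\frac{\beta}{\alpha}Y-X<0$ is false in the strip $-\beta/\alpha<Y<0$ whenever $X$ is small.
\item The heuristic runs the wrong way. With negligible reaction, on the thin interface layer $(f^m)''+\beta\xi_1 f'\approx0$ integrates to $(f^m)'+\beta\xi_1 f=a$. So $f$ rises monotonically to the plateau $a/(\beta\xi_1)$ and never comes back down.
\item The ``flux decreases'' argument fails, since $\alpha f+\beta\xi f'+\xi^\sigma f^p$ has no sign once $f'<0$. In fact, multiplying \eqref{SSODE} by $\xi^{N-1}$ and integrating over $[\xi_1,\xi_2]$ gives
\[
\xi_2^{N-1}(f^m)'(\xi_2)-\xi_1^{N-1}(f^m)'(\xi_1)=(N\beta-\alpha)\int_{\xi_1}^{\xi_2}\xi^{N-1}f\,d\xi-\int_{\xi_1}^{\xi_2}\xi^{N-1+\sigma}f^p\,d\xi .
\]
The left-hand side is negative for any $Q_2\to Q_3$ profile. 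On the other hand, $N\beta>\alpha$ whenever $p<p_F(\sigma)$, in particular under \eqref{pscond}. Hence no orbit along which $Z<N\beta/\alpha-1$ can connect $Q_2$ to $Q_3$, and that is exactly your regime.
\item The region $\{Y<-\beta/\alpha-\epsilon\}$ is not positively invariant. On its boundary,
\[
\dot Y=-\epsilon(\beta/\alpha+\epsilon)+X(N\beta/\alpha+N\epsilon-1-Z),
\]
which is positive for moderate $X$ and small $Z$.
\end{itemize}

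\textbf{The missing idea.} The reaction has to beat the drift, which is the opposite regime to yours. The paper passes to $W=XZ$ and studies the invariant plane $\{X=0\}$ of the new system, where
\[
\dot Y=-Y^2-\tfrac{\beta}{\alpha}Y-W, \qquad \dot W=(m+p-2)YW .
\]
\begin{itemize}
\item The stable manifold of $Q_1$ in this plane is shown (isocline analysis, no vertical asymptote, Poincar\'e--Bendixson) to emanate from $Q_2$ and to act as a separatrix.
\item Orbits from $Q_2$ above it go to $Q_3$. The part of the plane below it, which contains the $Y$-axis your orbits shadow, drains into $Q_0$.
\item Because $Q_2$ and $Q_3$ are nodes of the full three-dimensional system, orbits with small $X>0$ near such a planar connection still join $Q_2$ to $Q_3$. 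Lemma \ref{lem.Q23} then turns them into compactly supported subsolutions.
\end{itemize}
In your parameters, this regime requires $\lim XY^{m-1}\to0$ while $\lim XZ\,Y^{m+p-2}$ stays above its value on the separatrix. Since
\[
\lim XZ\,Y^{m+p-2}=C\,(\lim XY^{m-1})^{(m+p-2)/(m-1)}\,\xi_1^{(\sigma(m-1)+2(p-1))/(m-1)}
\]
with a negative exponent on $\xi_1$, this forces $\xi_1\to0$, where the weight $\xi^\sigma$ is large. That is the opposite of $\xi_1\to\infty$.
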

\begin{proof}
The proof is identical to that of the analogous result \cite[Lemma 2.3]{ILS25} and we will only give a sketch here for the sake of completeness. In order to find a trajectory of the system \eqref{PSsyst} connecting the critical points $Q_2$ and $Q_3$, we first perform a further change of variable in order to replace $Z$ by $W=XZ$ in the system \eqref{PSsyst}. We obtain the system
\begin{equation}\label{PSsystbis}
\left\{\begin{array}{ll}\dot{X}=X[(m-1)Y-2X],\\
\dot{Y}=-Y^2-\frac{\beta}{\alpha}Y+X-NXY-W,\\
\dot{W}=W[(m+p-2)Y+(\sigma-2)X],\end{array}\right.
\end{equation}
and it is easy to see that the analysis of the critical points $Q_2$ and $Q_3$ is similar to that in Lemma \ref{lem.Q23}. Let us analyze the invariant plane $X=0$, where the system \eqref{PSsystbis} reduces to
\begin{equation}\label{PSplane}
\left\{\begin{array}{ll}
\dot{Y}=-Y^2-\frac{\beta}{\alpha}Y-W,\\
\dot{W}=(m+p-2)YW,\end{array}\right.
\end{equation}
It readily follows from Lemma \ref{lem.Q1} that the critical point $Q_1=(0,-\beta/\alpha)$ is a saddle point for \eqref{PSplane} and its one-dimensional stable manifold $W_s(Q_1)$ consists of a single trajectory tangent to the eigenvector
$$
e_2=\left(1,\frac{(m+p-1)\beta}{\alpha}\right).
$$
We prove next that the unique trajectory in $W_s(Q_1)$ arrives from the critical point $Q_2$. The isocline $\dot{Y}=0$ of the system \eqref{PSplane} is given by
\begin{equation}\label{isoY}
Y^2+\frac{\beta}{\alpha}Y+W=0, \quad {\rm with \ normal \ vector} \quad \overline{n}=\left(2Y+\frac{\beta}{\alpha},1\right),
\end{equation}
and the direction of the flow of the system \eqref{PSplane} across the isocline \eqref{isoY} is given by the sign of the expression
\begin{equation}\label{interm7}
(m+p-2)\left(-Y^2-\frac{\beta}{\alpha}Y\right)Y\leq0, \quad Y\in\left[-\frac{\beta}{\alpha},0\right].
\end{equation}
Taking into account that the slope of the isocline \eqref{isoY} near $Q_1$ is given by
$$
\frac{dW}{dY}\Big|_{Y=-\beta/\alpha}=\frac{\beta}{\alpha}<\frac{(m+p-1)\beta}{\alpha},
$$
we infer from \eqref{interm7} that the trajectory contained in $W_s(Q_1)$ lies forever in the region $\dot{Y}<0$. The inverse function theorem allows us then to parametrize this trajectory as a curve $W=W(Y)$, with derivative
\begin{equation}\label{asympt}
W'(Y)=-\frac{(m+p-2)W(Y)Y}{Y^2+(\beta/\alpha)Y+W(Y)},
\end{equation}
and it is not difficult to show that this curve cannot have a vertical asymptote $W(Y)\to\infty$ as $Y\to Y_0$ for some $Y_0\in(-\beta/\alpha,0]$. Indeed, in such a case, it follows from \eqref{asympt} that in a left neighborhood of $Y_0$ we have
$$
|W'(Y)|\leq(m+p-2)|Y|
$$
thus the growth would be at most linear, contradicting the existence of the vertical asymptote. We infer that the trajectory in $W_s(Q_1)$ arrives from the half-plane $\{Y>0\}$, in which $W(Y)$ is decreasing with $Y$. An argument by contradiction and a direct application of the Poincar\'e-Bendixson's Theorem \cite[Section 3.7]{Pe} prove that the unique trajectory in $W_s(Q_1)$ arrives from $Q_2$. Since $Q_2$ is an unstable node and the trajectory $W(Y)$ is a separatrix for the system \eqref{PSplane}, any trajectory going out of $Q_2$ into the region $W>W(Y)$ remains forever in the region $W>W(Y)$ for $Y\geq-\beta/\alpha$ and cannot enter $Q_1$. An easy argument by contradiction based on the Poincar\'e-Bendixson's Theorem and the monotonicity of both $W$ and $Y$ along such a trajectory gives that all these trajectories starting from $Q_2$ in the region $W>W(Y)$ must enter the stable node $Q_3$. The stability of $Q_2$ and $Q_3$ entails that there are other orbits of the phase space not contained in the plane $X=0$ connecting them. It follows from Lemma \ref{lem.Q23} that the profiles $f$ corresponding to such trajectories after undoing \eqref{PSchange} are subsolutions to Eq. \eqref{eq0} supported on intervals $[\xi_1,\xi_2]\subset(0,\infty)$, completing the proof.
\end{proof}
We plot in Figure \ref{fig1} a typical phase portrait associated to the system \eqref{PSplane} according to the proof of Proposition \ref{prop.subs}.

\begin{figure}[ht!]
  \begin{center}
  \includegraphics[width=11cm,height=8cm]{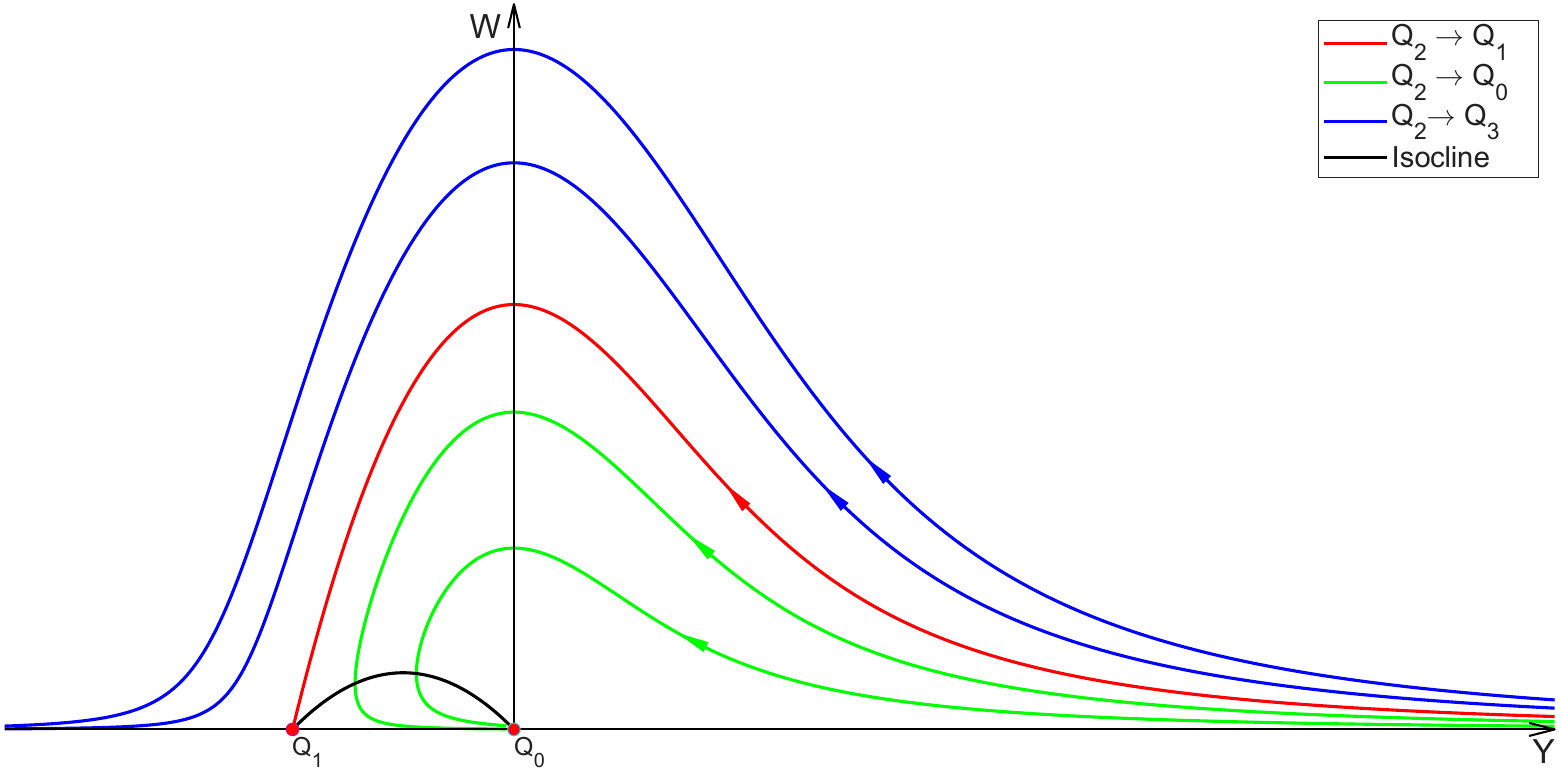}
  \end{center}
  \caption{The phase plane associated to the system \eqref{PSplane}. Experiment for $m=3$, $p=2$, $N=4$, $\sigma=-3$}\label{fig1}
\end{figure}

\subsection{Solutions to Eq. \eqref{eq0} with vertical asymptote}

We next wish to derive large solutions to Eq. \eqref{eq0}, which will be employed later as compactly supported supersolutions to Eq. \eqref{eq1} and will be essential in establishing the finite speed of propagation claimed in Theorem \ref{th.outer}.
\begin{proposition}\label{prop.largesup}
Let $m$, $p$, $N$ and $\sigma$ satisfy \eqref{range.exp} and \eqref{pscond}. Then there are compactly supported self-similar solutions to Eq. \eqref{eq0} in the form \eqref{SSS} whose profiles are decreasing and satisfy the behavior \eqref{beh.asympt} as $\xi\to0$.
\end{proposition}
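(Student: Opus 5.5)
We look for trajectories of the system \eqref{PSsyst2} (equivalently \eqref{PSsyst}) that go out of the critical point $P_1$ and enter the critical point $Q_1$. By Lemma \ref{lem.P1}, profiles on trajectories leaving $P_1$ have the vertical asymptote \eqref{beh.asympt} at $\xi=0$. By Lemma \ref{lem.Q1}, profiles on trajectories entering $Q_1$ have an interface at some $\xi_0\in(0,\infty)$ with $(f^m)'(\xi_0)=0$, so extending them by zero gives genuine weak solutions of Eq. \eqref{eq0} with compact support in $[0,\xi_0]$. The dimension count favours this. Under \eqref{pscond}, $P_1$ is an unstable node, so it emits a three-parameter (open) family of orbits. $Q_1$ has a two-dimensional stable manifold, and generic intersection of a 3D unstable set with a 2D stable manifold gives a one-parameter family of connections. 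We expect a whole one-parameter family (indexed, say, by the constant $C$ in \eqref{beh.asympt}, or by $\xi_0$ through the scaling invariance of the ODE).

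To make this rigorous, we would argue by a shooting and connectedness argument on a two-dimensional sphere of directions around $P_1$. Every orbit leaving $P_1$ has a well-defined $\omega$-limit. The candidate limits among the critical points are $Q_0$, $Q_1$, $Q_3$, $P_2$ and the points at infinity. $P_0$ is excluded by Lemma \ref{lem.P0}, since its stable manifold lies in the invariant plane $x=0$. $Q_2$ is excluded because it is an unstable node. We would then define two open subsets $\mathcal{A}$ and $\mathcal{B}$ of the sphere of directions:
\begin{itemize}
\item $\mathcal{A}$ consists of the orbits entering the stable node $Q_3$. These give profiles that change sign with $(f^m)'(\xi_0)<0$ (Lemma \ref{lem.Q23}), and they form an open set because $Q_3$ is a stable node.
\item $\mathcal{B}$ consists of the orbits that stay in the region $\{Y>-\beta/\alpha\}$ and approach $Q_0$ or $P_2$. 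These give positive profiles with a tail.
\end{itemize}
Openness of $\mathcal{B}$ is delicate, since $Q_0$ is non-hyperbolic and $P_2$ has not been analysed. We would replace it by an open condition: the orbit enters a region, bounded by a suitable plane such as $Y=-\beta/\alpha+\epsilon$ near $X=0$, from which $Q_3$ cannot be reached.

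We then need both sets to be nonempty. To show $\mathcal{A}\neq\emptyset$, we would use the invariant plane $z=0$ (that is, $Z=0$). There the system reduces to the one for the porous medium equation with the same $\alpha,\beta$, where $\alpha>0$ exceeds the Barenblatt-type threshold. Singular solutions there are known to cross zero, which gives orbits from $P_1$ to $Q_3$, and by openness nearby orbits with $z>0$ do the same. To show $\mathcal{B}\neq\emptyset$, we would look near the invariant plane $x=0$, which corresponds to the stationary equation. There orbits from $P_1$ can approach $P_2$, the explicit stationary solution $S$ from Lemma \ref{lem.statabove}, and small perturbations with $x>0$ should remain positive for a long time. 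Since the sphere of directions is connected, $\mathcal{A}$ and $\mathcal{B}$ are disjoint, and both are open and nonempty, their complement is nonempty. An orbit in this complement must then end at $Q_1$, after we rule out the other limits: the remaining points at infinity and periodic or recurrent behaviour, the latter via monotonicity of $X$ in suitable regions. Finally, we check that the resulting profiles are decreasing. On these orbits we would show $Y<0$ throughout. The plane $Y=0$ is crossed only in the negative direction when $X>0$, because $\dot Y=-X-XZ<0$ there. Since the orbit starts with $Y/X\to-(N-2)/m<0$, it stays in $Y<0$, hence $f'<0$.

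The main obstacle is the characterisation of the complementary orbits, namely proving that nothing other than $Q_1$ can be the limit. This requires handling the non-hyperbolic point $Q_0$ and the unexplored point $P_2$. We would first try to build explicit barrier surfaces, such as planes $Y=\mathrm{const}$ and the isocline $\dot Y=0$, that confine the relevant orbits. If this fails, we would fall back on a monotone-in-$C$ comparison of the profiles via the scaling invariance.
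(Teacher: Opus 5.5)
There is a genuine gap: your shooting scheme leaves the decisive steps unproved. Nonemptiness of $\mathcal{B}$ rests only on the hope that perturbations of stationary orbits near $\{x=0\}$ ``remain positive for a long time'', which says nothing about their eventual limit. Openness of $\mathcal{B}$ is not settled, as you note yourself since $Q_0$ is non-hyperbolic. The existence of a single $\omega$-limit point is assumed rather than proved. Finally, you leave open why an orbit in the complement of $\mathcal{A}\cup\mathcal{B}$ must enter $Q_1$.

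Some ingredients are also wrong. $P_2$ has $y_2=(\sigma+2)/(m-p)<0$ and $X=1/x\to\infty$, so it sits at $Y=-\infty$ and cannot be approached inside $\{Y>-\beta/\alpha\}$. Moreover, at $P_2$ the coefficient $2-(m-1)y_2$ in $dx/d\theta$ is positive, and the $(y,z)$-block of the Jacobian has determinant $(p-m)z_2<0$. Hence the only stable direction of $P_2$ lies in $\{x=0\}$, and $P_2$ is never the $\omega$-limit of an orbit with $x>0$. Lastly, \eqref{interm10} gives $N\beta>\alpha$, so $\alpha$ lies \emph{below} the Barenblatt relation $\alpha=N\beta$, not above it. Your justification of $\mathcal{A}\neq\emptyset$ is therefore unsupported as stated.

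The missing idea is that no shooting is needed, because under \eqref{pscond} $P_1$ is a hyperbolic source. First obtain one connection $P_1\to Q_1$ inside the invariant plane $\{Z=0\}$:
\begin{itemize}
\item There $Q_1$ is a saddle whose stable eigenvector $v_1$ has negative $Y$-component by \eqref{interm9}.
\item By \eqref{interm10} the flow crosses $Y=-\beta/\alpha$ upwards, so the stable orbit of $Q_1$ stays in $\{Y<-\beta/\alpha\}$ in backward time.
\item Together with the isocline $\dot Y=0$, this confines the orbit to a negatively invariant region where $X$ and $Y$ are monotone, so it emanates from a critical point at infinity.
\item $P_0$ is excluded because its unstable direction $y=-x/N$ corresponds to $Y=-1/N>-\beta/\alpha$. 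This leaves $P_1$.
\end{itemize}
Then, in the full system, the two-dimensional stable manifold of $Q_1$ contains this planar orbit. By continuous dependence, the orbits on that manifold with $Z>0$ close to it enter, in backward time, a neighbourhood of $P_1$ in which every orbit converges backwards to $P_1$. This gives the required family with $Z>0$ at once.

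Your dimension count was right, but the intersection is automatically nonempty once a single planar connection is known. Your final argument that $\{Y<0\}$ is preserved, since $\dot Y=-X(1+Z)<0$ on $\{Y=0\}$, then gives the monotonicity of the profiles exactly as in the paper.
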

\begin{proof}
In order to prove the existence of such self-similar solutions, we have to show that there are trajectories of the system \eqref{PSsyst} connecting the critical points $P_1$ and $Q_1$, according to Lemmas \ref{lem.Q1} and \ref{lem.P1}. We first analyze the trajectories of the system contained in the invariant plane $Z=0$, that is, trajectories of the reduced system
\begin{equation}\label{PSsystZ0}
\left\{\begin{array}{ll}\dot{X}=X[(m-1)Y-2X],\\
\dot{Y}=-Y^2-\frac{\beta}{\alpha}Y-X-NXY.\end{array}\right.
\end{equation}
It is an easy consequence of Lemmas \ref{lem.P1} and \ref{lem.Q1} that $P_1$ is an unstable node and $Q_1$ is a saddle point in the system \eqref{PSsystZ0}. The unique trajectory contained in the stable manifold of $Q_1$ enters tangent to the vector
$$
v_1=\left(1,-\frac{\sigma+2+N(m-p)}{m(m-p)}\right).
$$
Since $1<p<p_c(\sigma)$ by \eqref{pscond}, we deduce that
\begin{equation}\label{interm9}
\sigma+2+N(m-p)>\sigma+2+mN\left(1-\frac{N+\sigma}{N-2}\right)=-\frac{(\sigma+2)(mN-N+2)}{N-2}>0.
\end{equation}
Thus, on the one hand, the second component of the eigenvector $v_1$ is negative and, on the other hand, the direction of the flow of the system \eqref{PSsystZ0} across the line $Y=-\beta/\alpha$ is given by the sign of the expression
\begin{equation}\label{interm10}
X\left(\frac{N\beta}{\alpha}-1\right)=X\frac{N(m-p)+\sigma+2}{-(\sigma+2)}>0.
\end{equation}
It follows that the unique trajectory contained in the stable manifold of $Q_1$ reaches $Q_1$ from the half-plane $Y<-\beta/\alpha$ and it lies forever in this half-plane. In particular, $\eta\mapsto X(\eta)$ is a decreasing function. Consider next the isocline $\dot{Y}=0$, that is,
\begin{equation}\label{isoZ}
X=-\frac{Y^2+\frac{\beta}{\alpha}Y}{1+NY},
\end{equation}
with the normal direction
$$
\overline{n}=\left(-1-NY,-2Y-\frac{\beta}{\alpha}-NX\right).
$$
We infer that the flow of the system \eqref{PSsystZ0} across the curve \eqref{isoZ} points in the direction given by the sign of the expression obtained by taking the scalar product of the vector field of the system with $\overline{n}$, which is
$$
E(X,Y)=(-1-NY)X((m-1)Y-2X)<0
$$
in the region $Y<-\beta/\alpha$. Indeed, \eqref{interm10} ensures that $N\beta/\alpha-1>0$, hence $NY+1<0$ for any $Y<-\beta/\alpha$. It is easy to see that the trajectory contained in the stable manifold of $Q_1$ enters the critical point through the region limited between the line $Y=-\beta/\alpha$ and the isocline \eqref{isoZ}, and it remains always in this negatively invariant region (the negative invariance following from the direction of the flow on both the line $Y=-\beta/\alpha$ and the isocline). We obtain that $\eta\mapsto Y(\eta)$ is increasing along the trajectory. The monotonicity of both $X$ and $Y$ and the non-existence of finite critical points in the region $Y<-\beta/\alpha$ imply that the trajectory begins at a critical point at infinity.

Going now to the variables $(x,y,z)$ of the system \eqref{PSsyst2}, we consider the invariant plane $z=0$ (which coincides with $Z=0$), that is,
\begin{equation*}
\left\{\begin{array}{ll}\frac{dx}{d\theta}=x(2-(m-1)y), \\ \frac{dy}{d\theta}=-x-(N-2)y-my^2-\frac{\beta}{\alpha}xy.\end{array}\right.
\end{equation*}
We remark that the line $Y=-\beta/\alpha$ is equivalent to the line $y=-\beta x/\alpha$, while the unique trajectory starting from $P_0$ is tangent to the vector $(1,-N)$, that is, to the line $y=-x/N$. We deduce from \eqref{interm10} that
$$
\frac{N\beta}{\alpha}>1, \quad {\rm that \ is,} \quad -\frac{\beta}{\alpha}<-\frac{1}{N}.
$$
It follows that the trajectory contained in the unstable manifold of $P_0$ goes into the region $y/x=Y>-\beta/\alpha$ and cannot connect to $Q_1$. We thus deduce that the trajectory entering $Q_1$ comes from the critical point $P_1$.

Let us now recall that, under the condition \eqref{pscond}, the point $P_1$ is a full unstable node (of the system \eqref{PSsyst}) according to Lemma \ref{lem.P1}. This instability and a standard continuity argument imply that there are trajectories belonging to the stable manifold of $Q_1$ (in the system \eqref{PSsyst}) coming from $P_1$ and not contained in the plane $Z=0$. Picking such a trajectory and denoting by $f$ the profile corresponding to it after undoing the change \eqref{PSchange}, we get from Lemmas \ref{lem.Q1} and \ref{lem.P1} that $f$ satisfies \eqref{beh.asympt} as $\xi\to0$ and \eqref{beh.int} as $\xi\to\infty$, while the fact that the trajectory remains forever in the half-plane $Y<0$ implies that $f$ is decreasing, completing the proof.
\end{proof}

\noindent \textbf{Remarks. 1. The condition \eqref{pscond} is necessary}. One can observe that, at the level of the plane $Z=0$, the connection $P_1$ to $Q_1$ does not depend strictly on the condition \eqref{pscond}. However, if the condition \eqref{pscond} is not fulfilled, the critical point $P_1$ becomes a saddle for \eqref{PSsyst} according to Lemma \ref{lem.P1} and one might ask whether there still exist profiles with the local behavior \eqref{beh.asympt} as $\xi\to0$. The answer is negative: indeed, if we plug the ansatz \eqref{beh.asympt} in the differential equation \eqref{SSODE}, the three powers of $\xi$ competing as $\xi\to0$ (at least at a formal level) are the following
$$
\xi^{-N}, \quad \xi^{-(N-2)/m}, \quad \xi^{\sigma-p(N-2)/m}.
$$
We note that $N>(N-2)/m$, while, comparing with the third one, we have
$$
\sigma-\frac{p(N-2)}{m}+N=\frac{(N-2)(p_c(\sigma)-p)}{m}<0,
$$
provided $p>p_c(\sigma)$. Thus, the last term is the dominating one in the range $p>p_c(\sigma)$, where \eqref{pscond} is not satisfied. Since there is no possibility of compensating the last term with other terms in \eqref{SSODE}, there is no solution with such a behavior. As we shall see, this non-existence is reflected in a lack of suitable supersolutions in order to prove the exact upper bounds in \eqref{decay} and \eqref{supp} when \eqref{pscond} is not fulfilled.

\medskip 

\noindent \textbf{2. The Fujita exponent plays no role.} A closer inspection of the inequalities \eqref{interm9} and \eqref{interm10} shows that they actually remain true for
$$
1<p<p_F(\sigma):=m+\frac{\sigma+2}{N}.
$$
The exponent $p_F(\sigma)$ plays a fundamental role in the analysis of Eq. \eqref{eq1} when $\sigma>-2$, as shown, for example, in \cite{Qi98, Su02, IS25, IL26}. More precisely, when $\sigma>-2$, $p_F(\sigma)>m$ and this exponent plays the role of a \emph{Fujita exponent} (the name stemming from the seminal paper \cite{Fu66}): it has been established that, for any $p\in(1,p_F(\sigma)]$ if $\sigma\geq0$ and for any $p\in(p_G,p_F(\sigma)]$ if $-2<\sigma<0$ (where $p_G=1-\sigma(m-1)/2<m$), any non-trivial non-negative solution to Eq. \eqref{eq1} blows up in finite time. In sharp contrast with these results, in the range \eqref{range.exp}, we have
$$
p_c(\sigma)-p_F(\sigma)=\frac{(\sigma+2)(mN-N+2)}{N(N-2)}<0,
$$
whence $p_c(\sigma)<p_F(\sigma)<m$ and Theorem \ref{th.bound} shows that $p_F(\sigma)$ no longer acts as a Fujita-type exponent for $\sigma<-2$.

\section{Proof of Theorems \ref{th.outer} and \ref{th.supp}}\label{sec.supp}

In order to complete the proofs of Theorems \ref{th.outer} and \ref{th.supp}, we still need a final technical preparation: manipulating the small subsolutions to Eq. \eqref{eq0} obtained in Proposition \ref{prop.subs} in order to construct small subsolutions (in self-similar form) to Eq. \eqref{eq1}. Indeed, any supersolution to Eq. \eqref{eq0} is directly a supersolution to Eq. \eqref{eq1}, due to the negativity of $\sigma$, which implies that $|x|^{\sigma}>(1+|x|)^{\sigma}$, but this is not true for subsolutions. The next simple result has been proved in \cite[Lemmas 2.1 and 2.2]{ILS25}.
\begin{lemma}\label{lem.subs1}
If $\overline{u}$ is a subsolution to \eqref{eq0} and $c>0$, then
\begin{equation*}
w_{c}(x,t):=\lambda_c^{1/(m-1)}\overline{u}(x,\lambda_ct), \quad \lambda_c=c^{(m-1)/(m-p)},
\end{equation*}
is a subsolution to
\begin{equation}\label{eq0.c}
w_t=\Delta w^m+c|x|^{\sigma}w^p.
\end{equation}
Moreover, if $c\in(0,1)$, then $w_c$ is a subsolution to Eq. \eqref{eq1} in the region
\begin{equation}\label{reg.c}
\mathcal{R}_c:=\left\{(x,t)\in\real^N\times(0,\infty): |x|\geq K(c):=\frac{1}{c^{1/\sigma}-1}\right\}.
\end{equation}
\end{lemma}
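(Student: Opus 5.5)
The plan is to verify both claims by direct computation, first formally (pointwise, where $\overline{u}$ is smooth) and then in the weak sense of Definition \ref{def.weak}. Since only a rescaling in amplitude and time is involved, the weak formulation transfers without difficulty.

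\textbf{Step 1: the rescaled function solves \eqref{eq0.c}.} Set $s=\lambda_c t$ and let $\lambda=\lambda_c$. Using that $\overline{u}$ is a subsolution to \eqref{eq0}, I expect
$$
\partial_t w_c=\lambda^{m/(m-1)}\partial_s\overline{u}(x,s)\leq\lambda^{m/(m-1)}\left[\Delta\overline{u}^m+|x|^{\sigma}\overline{u}^p\right](x,s).
$$
On the other hand,
$$
\Delta w_c^m=\lambda^{m/(m-1)}\Delta\overline{u}^m, \qquad c|x|^{\sigma}w_c^p=c\lambda^{p/(m-1)}|x|^{\sigma}\overline{u}^p.
$$
The two reaction coefficients agree exactly when $c\lambda^{p/(m-1)}=\lambda^{m/(m-1)}$, that is, $c=\lambda^{(m-p)/(m-1)}$, which is the choice $\lambda_c=c^{(m-1)/(m-p)}$ in the statement. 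Hence $w_c$ is a subsolution to \eqref{eq0.c}.

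To make this rigorous, I will test the weak inequality for $\overline{u}$ against $\tilde\eta(x,s)=\eta(x,s/\lambda)$, where $\eta\geq0$ is a test function. I will then change variables $s=\lambda t$ in the time integrals and multiply through by $\lambda^{1/(m-1)}$. The factors $\lambda^{m/(m-1)}$ arising from $\partial_t$ and from the gradient term match, so this reproduces the weak subsolution inequality for $w_c$ with reaction coefficient $c|x|^\sigma$.

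\textbf{Step 2: comparison of the weights.} Let $c\in(0,1)$. The aim is to show that $c|x|^{\sigma}\leq(1+|x|)^{\sigma}$ whenever $|x|\geq K(c)$. Because $\sigma<0$, the map $r\mapsto r^{1/\sigma}$ is decreasing on $(0,\infty)$. Raising both sides to the power $1/\sigma$ therefore reverses the inequality, and the condition becomes
$$
c^{1/\sigma}|x|\geq1+|x|, \quad \text{that is,} \quad |x|\left(c^{1/\sigma}-1\right)\geq1.
$$
Since $c<1$ and $\sigma<0$, we have $c^{1/\sigma}>1$, so this is equivalent to $|x|\geq K(c)=1/(c^{1/\sigma}-1)$.

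\textbf{Step 3: conclusion in $\mathcal{R}_c$.} As $w_c\geq0$, on the region $\mathcal{R}_c$ we obtain
$$
\partial_t w_c\leq\Delta w_c^m+c|x|^{\sigma}w_c^p\leq\Delta w_c^m+(1+|x|)^{\sigma}w_c^p.
$$
In weak form this is the same inequality, tested with nonnegative $\eta$ supported in $\mathcal{R}_c$. This shows that $w_c$ is a subsolution to Eq. \eqref{eq1} there.

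The only delicate point, which is mild, is the meaning of a subsolution "in the region $\mathcal{R}_c$". I will interpret it as the weak inequality holding for nonnegative test functions supported in $\mathcal{R}_c$. Any boundary comparison on $|x|=K(c)$ is left to the later application, where the lemma will be combined with comparison on exterior domains.
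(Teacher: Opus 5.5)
Your proposal is correct and follows the paper's proof essentially step for step. It uses the same pointwise scaling computation giving $c=\lambda_c^{(m-p)/(m-1)}$, the same transfer to the weak formulation via $s=\lambda_c t$, and the same reduction of $c|x|^{\sigma}\leq(1+|x|)^{\sigma}$ to $|x|(c^{1/\sigma}-1)\geq1$. Your explicit test function $\eta(x,s/\lambda_c)$ in fact supplies weak-form details the paper omits. One small correction: in the weak form the non-reaction terms share the factor $\lambda_c^{-1/(m-1)}$, not $\lambda_c^{m/(m-1)}$ as you wrote, but this does not affect the argument.
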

\begin{proof}
We provide a formal proof assuming that $\overline{u}$ is a classical subsolution. By direct calculation, we have
\begin{equation*}
\begin{split}
&\partial_tw_c(x,t)=\lambda_c^{m/(m-1)}\partial_t\overline{u}(x,\lambda_ct),\\
&\Delta w_c^m(x,t)=\lambda_c^{m/(m-1)}\Delta \overline{u}^m(x,\lambda_ct),\\
&c|x|^{\sigma}w_c^p(x,t)=\lambda_c^{(m-p)/(m-1)}\lambda_c^{p/(m-1)}|x|^{\sigma}\overline{u}^p(x,\lambda_ct)=\lambda_c^{m/(m-1)}|x|^{\sigma}\overline{u}^p(x,\lambda_ct).
\end{split}
\end{equation*}
These calculations allow us to deduce that
$$
\big(\partial_tw_c-\Delta w_c^m-c|x|^{\sigma}w_c^p\big)(x,t)=\lambda_c^{m/(m-1)}\big(\partial_t\overline{u}-\Delta \overline{u}^m-|x|^{\sigma}\overline{u}^p\big)(x,\lambda_ct)\leq0,
$$
for any $(x,t)\in\real^N\times(0,\infty)$, thus $w_c$ is a subsolution to \eqref{eq0.c}. If $\overline{u}$ is only a weak solution, the proof proceeds in the same way on the weak formulation \eqref{weaksol} by changing variables in the integrals, and we omit here the details. We also obtain that
\begin{equation*}
\partial_tw_c-\Delta w_c^m-(1+|x|)^{\sigma}w_c^p=\big[c|x|^{\sigma}-(1+|x|)^{\sigma}\big]w_c^p.
\end{equation*}
If $(x,t)\in\mathcal{R}_c$, where $\mathcal{R}_c$ is the region defined in \eqref{reg.c}, we have $|x|(c^{1/\sigma}-1)\geq1$, which is equivalent, taking into account the negativity of $\sigma$, to $c|x|^{\sigma}\leq (1+|x|)^{\sigma}$, completing the proof.
\end{proof}
The subsolutions constructed in Lemma \ref{lem.subs1} are sufficient in order to derive estimates from below. Indeed, if we start from a subsolution $\tilde{u}$ in self-similar form given by Proposition \ref{prop.subs} and apply the rescaling introduced in Lemma \ref{lem.subs1} to it, we deduce the existence of a one-parameter family (depending on $c$) of small, compactly supported subsolutions to Eq. \eqref{eq1} maintaining the self-similar form inherited from \eqref{eq0}. Letting $u_0$ be an initial condition satisfying \eqref{icond} and $u$ be the solution to the Cauchy problem \eqref{eq1}-\eqref{ic}, by introducing a sufficient time delay $t_0$ to allow the support of $u(t)$ increase sufficiently, we can place a small subsolution constructed in Lemma \ref{lem.subs1} below $u(t+t_0)$. The rigorous statement of this comparison is the following:
\begin{proposition}\label{prop.subs2}
Let $u_0$ be as in \eqref{icond} and $u$ be the solution to the Cauchy problem \eqref{eq1}-\eqref{ic}. Then there exist $\lambda_*\in(0,1)$, a subsolution
$$
W_{*}(x,t)=\lambda_*^{1/(m-1)}\tilde{u}(x,\lambda_*t), \quad \lambda_*=c_*^{(m-1)/(m-p)},
$$
to Eq. \eqref{eq1} in the self-similar form \eqref{SSS} with a profile $f$ such that ${\rm supp}\,f=[\xi_1,\xi_2]\subset(0,\infty)$ (where $\tilde{u}$ is a subsolution to Eq. \eqref{eq0} given by Proposition \ref{prop.subs}) and a time $t_0>0$ such that $W_*(x,t)\leq u(x,t+t_0-1/\lambda_*)$ for any $t\geq1/\lambda_*$.
\end{proposition}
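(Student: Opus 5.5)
We build the subsolution from Proposition \ref{prop.subs} and Lemma \ref{lem.subs1}, and then place it below $u$ after a time shift. Let $\tilde u(x,t)=t^{-\alpha}f(|x|t^{-\beta})$ be the compactly supported self-similar subsolution to Eq. \eqref{eq0} with ${\rm supp}\,f=[\xi_1,\xi_2]\subset(0,\infty)$. For $c\in(0,1)$ to be fixed, Lemma \ref{lem.subs1} shows that $w_c(x,t)=\lambda_c^{1/(m-1)}\tilde u(x,\lambda_c t)$ is a subsolution to Eq. \eqref{eq1} in the region $\mathcal{R}_c=\{|x|\geq K(c)\}$.

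Note that $w_c$ is again of the self-similar form \eqref{SSS}, with profile $\lambda_c^{1/(m-1)-\alpha}f(\lambda_c^{-\beta}\cdot)$. Its support at time $t$ is the annulus $\xi_1(\lambda_ct)^{\beta}\leq|x|\leq\xi_2(\lambda_ct)^{\beta}$. In particular, at $t=1/\lambda_c$ it is the fixed annulus $\{\xi_1\leq|x|\leq\xi_2\}$, and it expands for later times. To make the region $\mathcal{R}_c$ contain the whole support for all $t\geq 1/\lambda_c$, we need $K(c)\leq\xi_1$. Since $K(c)=1/(c^{1/\sigma}-1)\to0$ as $c\to0$ (because $c^{1/\sigma}\to\infty$ when $\sigma<0$), this holds for $c_*$ small. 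Set $\lambda_*=\lambda_{c_*}$ and $W_*=w_{c_*}$.

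For $t\geq1/\lambda_*$, the function $W_*$ is then a subsolution of Eq. \eqref{eq1} on its positivity set. Extended by zero, it remains a subsolution on all of $\real^N$. This uses two facts: the interface behavior \eqref{beh.Q2}--\eqref{beh.Q3} at both ends of the support gives a jump of $(f^m)'$ of the right sign, which is the standard gluing of subsolutions, and zero solves Eq. \eqref{eq1}. This sign check at the interfaces is already contained in the remark after Lemma \ref{lem.Q23}.

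Next we need a time $t_0$ with $W_*(x,1/\lambda_*)\leq u(x,t_0)$. We write $W_*(\cdot,1/\lambda_*)=\lambda_*^{1/(m-1)}f(|x|)$, which is bounded and supported in the compact annulus $\xi_1\leq|x|\leq\xi_2$. It therefore suffices to find $t_0$ with
$$
\inf_{|x|\leq\xi_2}u(x,t_0)\geq\lambda_*^{1/(m-1)}\|f\|_{\infty}.
$$
This is the main obstacle. We would proceed in two steps.

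First, $u$ is a supersolution of the porous medium equation. By the expansion of positivity already invoked in the proof of Theorem \ref{th.bound} (see \cite[Proposition 9.19]{VPME}), $u(\cdot,t_1)>0$ on $\overline{B(0,\xi_2)}$ for some $t_1$, and by continuity $u(\cdot,t_1)\geq\mu>0$ there.

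Second, the threshold $\lambda_*^{1/(m-1)}\|f\|_\infty$ is not fixed independently of $c_*$. However, $\lambda_*^{1/(m-1)}=c_*^{1/(m-p)}\to0$ as $c_*\to0$, while shrinking $c_*$ only decreases $K(c_*)$. Hence we may decrease $c_*$ further, keeping $K(c_*)\leq\xi_1$, until $\lambda_*^{1/(m-1)}\|f\|_\infty\leq\mu$. We then take $t_0=t_1$. The order of choices matters: $f$ is fixed first, then $t_1$ and $\mu$, and only then $c_*$. This avoids any circularity, because $\mu$ depends only on $\xi_2$ and $u$, not on $c_*$.

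Finally, the comparison principle applies to $W_*(x,t)$ and $u(x,t+t_0-1/\lambda_*)$ for $t\geq1/\lambda_*$, since at $t=1/\lambda_*$ the ordering holds by the choice of $t_0$. This gives $W_*(x,t)\leq u(x,t+t_0-1/\lambda_*)$ for all $t\geq1/\lambda_*$. The comparison is justified by \cite[Theorem 5.1]{AdB91} and \cite[Proposition 2.2]{Su02}, using that $W_*$ is compactly supported and bounded.
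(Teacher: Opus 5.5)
Your proposal is correct and follows essentially the same route as the paper, which only sketches this step (deferring details to its earlier work): the delay $1/\lambda_*$ keeps the support of the rescaled subsolution inside $\mathcal{R}_{c_*}$ (your condition $K(c_*)\leq\xi_1$), the delay $t_0$ comes from the expansion of positivity of $u$ over $\overline{B(0,\xi_2)}$, and $\lambda_*$ is chosen afterwards in terms of $\inf_{B(0,\xi_2)}u(t_0)$, $\|f\|_\infty$, $\xi_1$ and $\xi_2$. Your explicit ordering of choices is exactly the "technically delicate" point the paper alludes to: fix $f$, then $t_0$ and $\mu$, then shrink $c_*$, using that both $K(c)$ and $c^{1/(m-p)}$ decrease as $c\to0$.
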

Let us notice that the time delay $t_0$ is justified by the expansion of the support of $u(t)$ to include the support of $W_*$, while the time delay $1/\lambda_*$ is justified by the requirement that the region $\mathcal{R}_{\lambda^*}$ should include the support of the small subsolution. Actually, the choice of $\lambda_*$ is technically more delicate and depends on the infimum of $u(t_0)$ in $B(0,\xi_2)$, the maximum of $f$ and the endpoints $\xi_1$, $\xi_2$ of the support of $f$. We refrain from giving further details here and refer the reader to the complete proof given in \cite[Proposition 2.5]{ILS25}.

We are now in a position to complete the proofs of both Theorems \ref{th.outer} and \ref{th.supp}. We start with the first of these proofs.
\begin{proof}[Proof of Theorem \ref{th.outer}]
Let $m$, $p$, $N$ and $\sigma$ be as in \eqref{range.exp} and let $u_0$ be an initial condition satisfying \eqref{icond}. Let $W_*$ be the subsolution constructed by Proposition \ref{prop.subs2}. The comparison provided by the same proposition gives, taking into account the self-similar form of $\tilde{u}$, that, for any $t>1/\lambda_*$, we have
$$
u(x,t+t_0-1/\lambda_*)\geq\lambda_*^{1/(m-1)}(\lambda_*t)^{-\alpha}f(|x|(\lambda_*t)^{-\beta})=C_*t^{-\alpha}f(|x|(\lambda_*t)^{-\beta}),
$$
where $C_*$ is a suitable power of $\lambda_*$. The latter is equivalent to
$$
t^{\alpha}u(x,t+t_0-1/\lambda_*)\geq C_*f(\lambda_*^{-\beta}|x|t^{-\beta}),
$$
Letting $s=t+t_0-1/\lambda_*$, which implies $t=s-t_0+1/\lambda_*$, we further get, for any $s>t_0$,
\begin{equation*}
\begin{split}
s^{\alpha}u(x,s)&=\left(\frac{s}{s-t_0+1/\lambda_*}\right)^{\alpha}t^{\alpha}u(x,s)\\
&\geq\left(\frac{s}{s-t_0+1/\lambda_*}\right)^{\alpha}C_*f\left(\lambda_*^{-\beta}|x|s^{-\beta}\left(\frac{s}{s-t_0+1/\lambda_*}\right)^{\beta}\right).
\end{split}
\end{equation*}
Letting $x\in\mathcal{O}_{\delta}(s)$ and taking the supremum over the set $\mathcal{O}_{\delta}(s)$, we readily obtain the lower bound in \eqref{decay}. Indeed, in the right-hand side we obtain the supremum of
$$
\left(\frac{s}{s-t_0+1/\lambda_*}\right)^{\alpha}C_*f\left(\lambda_*\xi\left(\frac{s}{s-t_0+1/\lambda_*}\right)^{\beta}\right), \quad \xi>\delta,
$$
which is not equal to zero and is independent of $s$, since the term still depending on $s$ is bounded for $s$ sufficiently large. Note also that the previous proof is completely independent of the condition \eqref{pscond}.

Assume next that \eqref{pscond} is in force. The proof of Theorem \ref{th.bound} and the negativity of $(\sigma+2)/(m-p)$ entail that there exists $\lambda^*>1$ such that
\begin{equation}\label{interm11}
u(x,t)\leq\lambda^*K_0(1+|x|)^{(\sigma+2)/(m-p)}<\lambda^*K_0|x|^{(\sigma+2)/(m-p)}, \quad (x,t)\in\real^N\times(0,\infty),
\end{equation}
where $K_0$ is defined in \eqref{stat.sol}. Since $\alpha/\beta=-(\sigma+2)/(m-p)$, we can further write
\begin{equation}\label{interm12}
\lambda^*K_0|x|^{(\sigma+2)/(m-p)}=\lambda^*K_0t^{-\alpha}|(xt^{-\beta})|^{(\sigma+2)/(m-p)}.
\end{equation}
Letting now $t>0$ and $x\in\mathcal{O}_{\delta}(t)$, we deduce from the negativity of $(\sigma+2)/(m-p)$ that
$$
t^{\alpha}u(x,t)\leq\lambda^*K_0\delta^{(\sigma+2)/(m-p)}, \quad t\in(0,\infty), \quad x\in\mathcal{O}_{\delta}(t),
$$
completing the proof.
\end{proof}

\noindent \textbf{Remark. Necessity of the condition \eqref{pscond}.} Note that the final part of the proof cannot be performed without the condition \eqref{pscond}, since in the last estimates we need the exponents $\alpha$ and $\beta$ associated exactly to the given $\sigma<-2$, in order to get the upper bound. Thus, we can still write \eqref{interm11} in the general case (removing the condition \eqref{pscond}) by replacing $\sigma$ with a higher exponent $\sigma'\in(\sigma,-2)$ for which \eqref{pscond} holds true, in the same way as we did in the proof of Theorem \ref{th.bound}, but then \eqref{interm12} is only valid with the exponents $\alpha'$ and $\beta'$ corresponding to the exponent $\sigma'$ and not to the original exponent $\sigma$, leading to a weaker upper bound.

We continue with the proof of Theorem \ref{th.supp}.
\begin{proof}[Proof of Theorem \ref{th.supp}]
Let $m$, $p$, $N$ and $\sigma$ be as in \eqref{range.exp} and $u_0$ be a compactly supported function satisfying \eqref{icond}. The comparison with the small compactly supported subsolution $W_*$ established in Proposition \ref{prop.subs2} readily leads to the lower bound in Theorem \ref{th.supp}. We leave the details to the reader, noting that, once more, this estimate is valid independently of the condition \eqref{pscond}.

Assume next that \eqref{pscond} holds true. We infer from Proposition \ref{prop.largesup} that there is at least one compactly supported self-similar solution to Eq. \eqref{eq0} (and thus supersolution to Eq. \eqref{eq1}) such that
\begin{equation}\label{super}
U(x,t)=t^{-\alpha}f(|x|t^{-\beta}), \quad \lim\limits_{\xi\to0}\xi^{(N-2)/m}f(\xi)=K\in(0,\infty).
\end{equation}
Let $\xi_0\in(0,\infty)$ be such that ${\rm supp}\,f=[0,\xi_0]$. We then infer from \eqref{super} that ${\rm supp}\,U(t)=B(0,\xi_0t^{\beta})$ for any $t>0$. Moreover, given $x\in\real^N$, we deduce from \eqref{beh.asympt} that
$$
\lim\limits_{t\to\infty}U(x,t)=\lim\limits_{t\to\infty}t^{-\alpha}(|x|t^{-\beta})^{-(N-2)/m}=|x|^{-(N-2)/m}\lim\limits_{t\to\infty}t^{\beta(N-2)/m-\alpha}=\infty,
$$
since
$$
\frac{\beta(N-2)}{m}-\alpha=\frac{(m-p)(N-2)+m(\sigma+2)}{-\sigma(m-1)-2(p-1)}=\frac{m(N+\sigma)-p(N-2)}{-\sigma(m-1)-2(p-1)}>0,
$$
as follows from the condition \eqref{pscond}. Letting
$$
R_0:=\sup\{|x|\in\real^N: u_0(x)>0\},
$$
we can choose $\tau_0>0$ sufficiently large such that $\xi_0\tau_0^{\beta}>2R_0$ and $U(R_0,\tau_0)>\|u_0\|_{\infty}$. Since $U$ has a decreasing profile according to Proposition \ref{prop.largesup}, we deduce that $U(x,\tau_0)>u_0(x)$, for any $x\in B(0,R_0)$ and the comparison principle ensures that
$$
u(x,t)\leq U(x,t+\tau_0), \quad t\in(0,\infty).
$$
In particular, we find that
$$
R(t)\leq\xi_0(t+\tau_0)^{\beta}<c_2t^{\beta}, \quad t\in(1,\infty),
$$
for some $c_2>0$ large enough, establishing the claimed upper bound.

Finally, if $\sigma$ is sufficiently small such that the condition \eqref{pscond} is no longer valid, there exists $\sigma'\in(\sigma,-2)$ such that $1<p<p_c(\sigma')$. Thus, there exists a self-similar large supersolution of the form \eqref{super} for Eq. \eqref{eq1} with exponent $\sigma'$ and thus also for Eq. \eqref{eq1} with the original exponent $\sigma$. The same comparison as above ensures the finite speed of propagation of the support of $u(t)$, but the upper bound is derived in terms of the exponent $\beta'$ corresponding to $\sigma'$ and not in terms of the self-similarity exponent $\beta$ associated with the original $\sigma$.
\end{proof}

\section{Proof of Theorem \ref{th.lower}}\label{sec.lower}

We are now ready to prove Theorem \ref{th.lower}. The proof employs both the outcome of Theorem \ref{th.bound} and the technical constructions performed in Section \ref{sec.tech} and already used in the previous section. We begin with a preparatory lemma.
\begin{lemma}\label{lem.rad}
Let $u_0$ be a bounded radially symmetric function with non-increasing profile in radial variables and $u$ be the solution to the Cauchy problem \eqref{eq1}-\eqref{ic}. Then $u(t)$ is radially symmetric with non-increasing profile for any $t\in(0,\infty)$.
\end{lemma}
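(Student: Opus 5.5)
Radial symmetry follows from uniqueness: the equation and the weight $(1+|x|)^{\sigma}$ are invariant under rotations $x\mapsto Ox$, $O\in O(N)$. If $u_0$ is radial, then $u(Ox,t)$ is also a weak solution with the same initial datum. By the uniqueness result recalled after Definition \ref{def.weak} (\cite[Theorem 5.1]{AdB91}), $u(Ox,t)=u(x,t)$ for every $O$, so $u(t)$ is radial.

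For the monotonicity we use the standard reflection (Alexandrov) argument, adapted to the fact that the weight is itself radially non-increasing. Fix a unit vector $e$ and $\mu>0$, and define the half-space $H_\mu=\{x\cdot e>\mu\}$ and the reflection $x^{\mu}=x-2(x\cdot e-\mu)e$ across $\partial H_\mu$. Set $v(x,t)=u(x^{\mu},t)$. For $x\in H_\mu$ we have $|x^{\mu}|\le|x|$, so $(1+|x^{\mu}|)^{\sigma}\ge(1+|x|)^{\sigma}$ since $\sigma<0$. Since the Laplacian is invariant under reflections,
\[
\partial_t v-\Delta v^m-(1+|x|)^{\sigma}v^p=\big[(1+|x^{\mu}|)^{\sigma}-(1+|x|)^{\sigma}\big]v^p\ge0 \quad \text{in } H_\mu\times(0,\infty),
\]
so $v$ is a supersolution in $H_\mu$. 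On $\partial H_\mu$ we have $v=u$. At $t=0$, $v(x,0)=u_0(x^{\mu})\ge u_0(x)$ because $|x^{\mu}|\le|x|$ and the profile of $u_0$ is non-increasing. Comparison on the unbounded domain $H_\mu$ (with bounded data) then gives $u(x,t)\le u(x^{\mu},t)$ for $x\in H_\mu$. Letting $\mu\downarrow0$ and using that $e$ is arbitrary, together with radial symmetry, we conclude that $r\mapsto u(r,t)$ is non-increasing.

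The main obstacle is justifying comparison in $H_\mu$, an unbounded domain with lateral boundary, within the weak setting of Definition \ref{def.weak}. The easiest way around this is to approximate first: solve the problem on the balls $B(0,R)$ with zero Dirichlet data. On a ball, the reflection argument only needs comparison on the bounded cap $B(0,R)\cap H_\mu$, and it works because $v\ge0=u$ on the spherical part of the cap's boundary. The bounded-domain comparison of \cite{AdB91, Su02} applies there. We then let $R\to\infty$, using monotone convergence of the Dirichlet approximations to the unique solution, which is standard given the uniform bound $\|u(t)\|_\infty\le C$ from the proof of Theorem \ref{th.bound}. The non-increasing radial profile is preserved in the limit.

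Alternatively, we can avoid reflections entirely. Rewriting the equation in the radial variable, one can compare $u(\cdot,t)$ with its translates after noting that the reaction coefficient is non-increasing in $r$. The reflection route, however, is the cleanest.
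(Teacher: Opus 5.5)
Your proof is correct, but it takes a different route from the paper. The paper gives only a formal argument. It differentiates the radial form of \eqref{eq1} in $r$ and obtains the degenerate parabolic equation \eqref{eq1.deriv} for $w=\partial_r u$. Because the extra source term $\sigma(1+r)^{\sigma-1}u^p$ is negative, $w\equiv0$ is a strict supersolution, and the paper concludes $w\le0$ by comparison on the positivity set of $u$, leaving rigor to an unspecified approximation. You use Alexandrov reflection instead. The same structural fact ($\sigma<0$, so the weight decreases radially) enters through $(1+|x^{\mu}|)^{\sigma}\ge(1+|x|)^{\sigma}$ on $H_\mu$. This makes the reflected solution a supersolution, so you only need the weak comparison principle on the bounded caps $B(0,R)\cap H_\mu$, followed by the limit $R\to\infty$.

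What your route buys is that it works directly with weak solutions. It never differentiates $u$, and it never has to make sense of \eqref{eq1.deriv} near the free boundary $\partial\{u>0\}$ or near the singular coefficients at $r=0$. Near the free boundary, the paper's comparison for $w$ would need boundary information that it does not supply. The price is the Dirichlet approximation, which you set up correctly: $v\ge0=u_R$ on the spherical part, $v=u_R$ on the flat part, and the initial data are ordered. The paper's computation is shorter but stays formal.

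Three minor points:
\begin{itemize}
\item The limit $\mu\downarrow0$ is unnecessary. For $0\le r_1<r_2$, taking $\mu=(r_1+r_2)/2$ and $x=r_2e$ gives $x^{\mu}=r_1e$ directly.
\item You do not need the uniform bound from Theorem \ref{th.bound} to pass to the limit. That bound is only available under \eqref{icond}, not for an arbitrary bounded radial $u_0$. Comparison on balls already gives $u_R\le u_{R'}\le u$ for $R<R'$, and the local boundedness of $u$ is enough.
\item Your closing alternative with translates in $r$ is not justified as stated. The drift $\frac{N-1}{r}\partial_r u^m$ is not translation invariant, and it produces an error term whose sign depends on $\partial_r u$ itself. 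You do not rely on it, though.
\end{itemize}
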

\begin{proof}
The radial symmetry of $u(t)$ for $t>0$ is obvious from the rotational invariance of Eq.~\eqref{eq1}. We provide a formal proof of the monotonicity, which can be made rigorous by a standard approximation argument. We derive the equation solved by $\partial_r u$, $r=|x|$. To this end, we start from Eq.~\eqref{eq1} written in radially symmetric variables as
\begin{equation}\label{eq1.rad}
	\partial_t u(t,r) = \partial_r^2 u^m(t,r) + \frac{N-1}{r} \partial_r u^m(t,r)+(1+r)^{\sigma} u^p(t,r), 
\end{equation}
and differentiate~\eqref{eq1.rad} with respect to $r$ to find, by direct calculation, that $w:=\partial_r u$ solves
\begin{equation}\label{eq1.deriv}
	\begin{split}
		\partial_t w &=mu^{m-1} \partial_r^2 w + 3m(m-1)u^{m-2}w\partial_r w + m(m-1)(m-2)u^{m-3}w^3\\
		& \quad -\frac{m(N-1)}{r^2}u^{m-1}w + \frac{m(N-1)(m-1)}{r}u^{m-2}w^2\\
		& \quad + \frac{m(N-1)}{r}u^{m-1} \partial_r w +p(1+r)^{\sigma}u^{p-1}w +\sigma(1+r)^{\sigma-1}u^p.
	\end{split}
\end{equation}
This is a degenerate parabolic equation that satisfies the comparison principle on the positivity set of $u$. The negativity of $\sigma$ ensures that $w\equiv0$ is a strict supersolution to~\eqref{eq1.deriv} and we infer by comparing on the positivity set of $u$ that $w(t)\leq0$ for any $t\geq0$, provided $w_0=\partial_r u_0\leq0$. This completes the proof.
\end{proof}
With this preparation, we can proceed to the proof of Theorem \ref{th.lower}.
\begin{proof}[Proof of Theorem \ref{th.lower}]
We split the proof into several steps for the simplicity of the presentation. 

\medskip

\noindent \textbf{Step 1. Lower bound near the origin.} Assume for the moment that $u_0$ is radially symmetric with a non-increasing profile. It follows from Lemma \ref{lem.rad} that $u(t)$ enjoys the same properties for any $t>0$ and, in particular, $\|u(t)\|_{\infty}=u(0,t)$ for any $t\geq0$. Recalling Theorem \ref{th.bound} and its proof, we have shown that there exist $r_0>0$ sufficiently small such that
\begin{equation}\label{infstat}
\inf\{u(x,t): (x,t)\in B(0,r_0)\times(t_0,\infty)\}=\kappa>0.
\end{equation}
We recall that this lower bound follows by comparison with the stationary solution of a homogeneous Dirichlet problem in a double ball $B(0,2r_0)$, thus we can ensure that the infimum in \eqref{infstat} is strictly positive.

\medskip 

\noindent \textbf{Step 2. A stationary subsolution outside $B(0,r_0)$.} In this step, we consider the function 
\begin{equation}\label{subs}
V(x):=K(1+|x|)^{(\sigma+2)/(m-p)}, 
\end{equation}
with $K>0$ to be chosen later. We request $V$ to be a subsolution to Eq. \eqref{eq1} in the exterior domain $\real^N\setminus B(0,r_0)$. We calculate (with $r=|x|$)
\begin{equation*}
\begin{split}
\Delta V^m(x)&=(V^m)_{rr}(x)+\frac{N-1}{r}(V^m)_r(x)=K^m\frac{m(\sigma+2)}{m-p}(1+r)^{m(\sigma+2)/(m-p)-1}\frac{N-1}{r}\\
&+K^m\frac{m(\sigma+2)}{m-p}\left[\frac{m(\sigma+2)}{m-p}-1\right](1+r)^{(m\sigma+2p)/(m-p)}\\
&=-\frac{K^mm(\sigma+2)}{m-p}(1+r)^{(m\sigma+2p)/(m-p)}\left[-\frac{m(\sigma+2)}{m-p}+1-\frac{(N-1)(1+r)}{r}\right].
\end{split}
\end{equation*}
In view of this calculation and of the fact that 
$$
\sigma+\frac{p(\sigma+2)}{m-p}=\frac{m\sigma+2p}{m-p},
$$
we obtain after easy algebraic manipulations that 
\begin{equation}\label{interm13}
\begin{split}
-\Delta V^m(x)&-(1+|x|)^{\sigma}V^p(x)=K^m(1+r)^{(m\sigma+2p)/(m-p)}\\
&\times\left[\frac{m(\sigma+2)}{m-p}\left(-\frac{m(\sigma+2)}{m-p}+1-\frac{(1+r)(N-1)}{r}\right)-K^{p-m}\right].
\end{split}
\end{equation}
We thus want to choose $K$ such that, on the one hand, $K<\kappa$ (where $\kappa$ is defined in \eqref{infstat}) and, on the other hand, the term in brackets in \eqref{interm13} is negative for $r\geq r_0$. Since the function 
$$
r\longrightarrow\frac{(N-1)(1+r)}{r}
$$
is decreasing with $r$, and observing that $p<p_c(\sigma)$ implies
$$
-\frac{m(\sigma+2)}{m-p}+1-\frac{(N-1)(1+r_0)}{r_0}=\frac{(N-2)(p-p_c(\sigma))}{m-p}-\frac{N-1}{r_0}<0,
$$
it is enough to choose $K>0$ such that 
\begin{equation}\label{Kcond1}
0<K<\min\left\{\kappa,\left[\frac{m(\sigma+2)}{m-p}\left(-\frac{m(\sigma+2)}{m-p}+1-\frac{(1+r_0)(N-1)}{r_0}\right)\right]^{1/(p-m)}\right\}.
\end{equation}
With this choice of $K$, which only depends on $m$, $p$, $N$, $\sigma$ and $r_0$, $V(x)$ becomes a stationary subsolution to Eq. \eqref{eq1} in the complement of the ball $B(0,r_0)$.

\medskip 

\noindent \textbf{Step 3. Comparison between $V$ and $W_{*}$.} Recall next the subsolution 
$$
W_{*}(x,t)=\lambda_*^{1/(m-1)}t^{-\alpha}f(|x|\lambda_*^{-\beta}t^{-\beta})
$$
constructed in Proposition \ref{prop.subs2}, with a profile $f$ supported on an interval $[\xi_1,\xi_2]\subset(0,\infty)$. Let $\xi_*$ be the unique point of maximum of $f$. Then $W_{*}(\cdot,t)$ achieves its maximum value at 
$$
r(t):=\xi_*\lambda_*^{\beta}t^{\beta}, \quad {\rm with} \quad W_{*}(r(t),t)=\lambda_*^{1/(m-1)}t^{-\alpha}f(\xi_*).
$$
Let us impose the following smallness condition on $K$:
\begin{equation}\label{Kcond2}
0<K<f(\xi_*)\xi_*^{-(\sigma+2)/(m-p)}\lambda_*^{1/(m-1)+\alpha}.
\end{equation}
We prove that \eqref{Kcond2} implies 
\begin{equation}\label{interm14}
M_*(t):=W_{*}(r(t),t)=\max\{W_{*}(x,t): x\in\real^N\}>V(r(t)), \quad t>0,
\end{equation}
where $V$ is the explicit stationary subsolution (outside the ball $B(0,r_0)$) constructed in Step 2. Indeed, the condition \eqref{Kcond2} ensures that 
\begin{equation*}
\begin{split}
f(\xi_*)&>\lambda_*^{-1/(m-1)-\alpha}\xi_*^{(\sigma+2)/(m-p)}K=\lambda_*^{-1/(m-1)}K(\xi_*\lambda_*^{\beta})^{-\alpha/\beta}\\
&>\lambda_*^{-1/(m-1)}K\left(\frac{1}{t^{\beta}}+\xi_*\lambda_*^{\beta}\right)^{-\alpha/\beta}\\
&=\lambda_*^{-1/(m-1)}Kt^{\alpha}(1+\xi_*\lambda_*^{\beta}t^{\beta})^{(\sigma+2)/(m-p)}=\lambda_*^{-1/(m-1)}t^{\alpha}V(r(t)),
\end{split}
\end{equation*}
which leads to \eqref{interm14}. Taking into account the radial symmetry of both $W_{*}$ and $V$ together with the compactness in $(0,\infty)$ of the support of $W_*$, we infer from \eqref{interm14} that at any $t>0$ there are at least two intersection points between these two functions. We denote by $r_1(t)$ the unique intersection point such that $r_1(t)\in(0,r(t))$, whose uniqueness follows from the opposite monotonicity of $W_*(\cdot,t)$ and $V$ for $r<r(t)$. We then denote by $r_2(t)\in(0,\infty)$ the first intersection point between $W_*(\cdot,t)$ and $V$ such that $r_2(t)>r(t)$, see Figure \ref{fig3}.

\medskip 

\noindent \textbf{Step 4. Conclusion for radially symmetric solutions.} Choose $K>0$ satisfying both the smallness conditions \eqref{Kcond1} and \eqref{Kcond2}. Recalling the radial symmetry of both $u(\cdot,t)$ and $V$ as well as the choice of $r_0$ and $K$ in \eqref{Kcond1}, we define
$$
s(t):=\sup\{r\in(0,\infty): u(r,t)>V(r)\}>r_0.
$$
If there exists $t_1>0$ such that $s(t_1)=\infty$, then the lower bound in \eqref{lower} is already fulfilled at $t=t_1$ and by comparison on $(r_0,\infty)$ it also remains true for $t\in(t_1,\infty)$. We may thus assume that $s(t)<\infty$ for any $t>0$, and then $s(t)$ is the first intersection point between the radial profiles of $u(\cdot,t)$ and $V$. For example, this is the case when $u_0$ is compactly supported, according to Theorem \ref{th.supp}, see Figure \ref{fig3}. It only remains to prove that $s(t)\to\infty$ as $t\to\infty$. To this end, we pick $K>0$ eventually smaller (it is sufficient, due to the radial monotonicity of both $u(t)$ and $V$, to pick $K$ smaller than the maximum $M_*(1/\lambda_*)$ of $W_{*}(\cdot,1/\lambda_*)$) such that we have $s(t_0)>r_2(1/\lambda_*)>r(1/\lambda_*)$, where $t_0$ and $\lambda_*$ are the ones from Proposition \ref{prop.subs2}. The choice of $K$ according to \eqref{Kcond1} and the fact that $V(x)\leq K$ for any $x\in\real^N$ ensure, on the one hand, that
$$
u(x,t)\geq\kappa>K>V(x), \quad |x|=r_0, \quad t>0,
$$
and, on the other hand, that $V$ is a subsolution to Eq. \eqref{eq1} for $|x|\geq r_0$ and $t>0$. A comparison argument in the region $|x|\geq r_0$, $t>1/\lambda_*$, together with Proposition \ref{prop.subs2}, imply that
$$
s\left(t+t_0-\frac{1}{\lambda_*}\right)>r_2(t), \quad t>\frac{1}{\lambda_*}.
$$
Since, from Step 3, 
$$
r_2(t)>r(t)=\xi_*\lambda_*^{\beta}t^{\beta}\to\infty, \quad {\rm as} \ t\to\infty,
$$
we deduce that $s(t)\to\infty$ as $t\to\infty$. Thus, picking any compact set $D\subset\real^N$, there exists $t_D>0$ such that $D\subset B(0,s(t))$ for any $t\geq t_D$. The lower bound \eqref{lower} follows then from the choice of $s(t)$.

In order to ease the reading of the previous technical argument, we draw in Figure \ref{fig3} the relative positions of the radially symmetric functions involved in the previous steps and their intersection points.

\begin{figure}[htbp]
    \centering
\begin{tikzpicture}[scale=1.2, thick, >=latex]

    \definecolor{curvaSup}{RGB}{214, 39, 40}
    \definecolor{curvaInf}{RGB}{31, 119, 180}
    \definecolor{curvaCampana}{RGB}{44, 160, 44}
    \definecolor{lineasApoyo}{RGB}{150, 150, 150}

    \draw[ thick, ->] (0.5, 0) -- (11.5, 0) ;
    \draw[ thick, ->] (1, -0.5) -- (1, 5.0);

    \draw[very thick, name path=curvaF, color=curvaInf, domain=1:11.5, samples=150]
        plot (\x, {2.5*exp(-0.25*\x)}) node[above right] {$V$};

    \draw[very thick, name path=curvaG, color=curvaSup] (1, 4.5) .. controls (4, 4.4) and (7, 3.0) .. (10, 0)
        node[above right, pos=0.6] {$u$};

    \draw[very thick, name path=curvaU, color=curvaCampana, domain=3.9:6.98, samples=100]
        plot (\x, {-(\x - 5.45)^2 + 2.0525});
    \node[color=curvaCampana, above] at (5.45, 2.0525) {$W_*$};

    \path[name intersections={of=curvaF and curvaU, by={H1, H2}}];
    \path[name intersections={of=curvaF and curvaG, by={S}}];

    \draw[dashed, color=lineasApoyo] (2, 0) -- (2, 4.45);
    \node[below] at (2, -0.1) {$r_0$};

    \draw[dashed, color=lineasApoyo] (H1) -- (H1 |- 0,0);
    \filldraw (H1) circle (1.2pt);
    \node[below] at (H1 |- 0,-0.1) {$r_1(t)$};

    \draw[dashed, color=lineasApoyo] (H2) -- (H2 |- 0,0);
    \filldraw (H2) circle (1.2pt);
    \node[below] at (H2 |- 0,-0.1) {$r_2(t)$};

\draw[dashed, color=lineasApoyo] (S) -- (S |- 0,0);
    \filldraw (S) circle (1.2pt);
    \node[below] at (S |- 0,-0.1) {$s(t)$};

\end{tikzpicture}
    \caption{Geometric configuration of the functions and intersection points employed in the proof of Theorem \ref{th.lower}}
    \label{fig3}
\end{figure}

\medskip

\noindent \textbf{Step 5. End of the proof.} Removing now the condition of radial symmetry from $u_0$, it is well-known from basic results on the porous medium equation that there exists $\tau>0$ such that $u(\tau)>0$ on a ball $B(0,2\delta)$ for some $\delta>0$. We can then place below $u(\tau)$ a small initial condition $v_0$ which is radially symmetric, non-increasing with respect to $r$ and supported on $B(0,\delta)$. The comparison principle gives $u(x,t+\tau)\geq v(x,t)$ for any $(x,t)\in\real^N\times(0,\infty)$, while the lower bound in \eqref{lower} applies to $v$, according to the previous steps. Then, it also applies to $u$ with a simple time shift, as claimed.

Observe that all the previous steps of the proof can be performed for any $p\in(1,m)$, without imposing the conditions \eqref{pscond}. Indeed, this condition has been employed only once, in the choice of $K$ in \eqref{Kcond1}. However, if the big term in brackets in \eqref{interm13} is directly negative, then we simply choose $K<\kappa$ and the proof goes on identically. 

Finally, when the conditions \eqref{pscond} are satisfied, the upper bound in \eqref{lower} is an immediate consequence of \eqref{upper}, completing the proof.
\end{proof}

\noindent \textbf{Remark.} Note that, if $p\geq p_c(\sigma)$, we cannot obtain the upper bound in \eqref{lower}. Instead, we can only obtain a weaker upper bound with a supersolution $\overline{S}_{\lambda}$ corresponding to Eq. \eqref{eq1} with a suitable $\overline{\sigma}>\sigma$ in place of $\sigma$, as already shown in the proof of Theorem \ref{th.bound}.

\section*{Final discussion: extensions and open questions}

The results of this paper open a number of interesting questions regarding the properties of solutions to Eq. \eqref{eq1} which deserve further investigation. We briefly discuss some of them, pointing out the extra difficulties and issues arising in their analysis.

\medskip 

\noindent \textbf{1. Critical low dimensions.} As commented in the Introduction, dimensions $N=1$ and $N=2$ are special and have been intentionally excluded from this work. Indeed, if we replace the weight $(1+|x|)^{\sigma}$ with a compactly supported weight 
$$
a(x)=\begin{cases}
  1, & \mbox{if } x\in B(0,L), \\
  0, & \mbox{if } |x|\geq L,
\end{cases}
$$
it was shown in \cite{FdP18} (dimension $N=2$) and \cite{FdP20} (dimension $N=1$) that solutions to Eq. \eqref{eq2} exhibit grow-up as $t\to\infty$ (that is, they are unbounded but do not blow up in finite time) for any $p\in(1,m)$ in dimension $N=2$ and for any $p\in(1,(m+1)/2)$ in dimension $N=1$. Moreover, in dimension $N=1$ and for $(m+1)/2\leq p\leq m+1$, finite time blow-up occurs for any non-trivial solution to Eq. \eqref{eq2} (see \cite{FdPV06}). A simple comparison suggests that the same phenomena should occur for solutions to Eq. \eqref{eq1}, in sharp contrast with the uniform boundedness established in Theorem \ref{th.bound}. However, this simple comparison is not sufficient to establish the grow-up rates (or the blow-up rates in dimension $N=1$ for $p>(m+1)/2$).

\medskip

\noindent \textbf{2. Precise large time behavior.} The problem of proving convergence to a precise asymptotic profile seems to be extremely difficult, especially when the conditions \eqref{pscond} are not satisfied, when even a plausible candidate for the asymptotic profile is unknown. At an intuitive level, due to the different time scales in inner sets and outer sets emphasized in Theorems \ref{th.bound} and \ref{th.outer}, one may conjecture a behavior involving three regions: one where the large time behavior is given by a stationary solution (on compact sets), one where the asymptotic profile could be a (local) self-similar solution with compact support (in outer sets, near the limit of the positivity region of $u(t)$) and a large transition region. Such a complex asymptotic description has been obtained in the case of reaction-diffusion equations by Herraiz in \cite{He99} (see also \cite{HV92} for an absorption problem), while a simpler description, considering only the inner and outer sets (without the intermediate region) is given for a nonhomogeneous porous medium equation in \cite{KRV10}. We refrain from pursuing this seemingly complex problem and leave it as open for the future.

\medskip 

\noindent \textbf{3. The case $\sigma=-2$}. Since it separates the range $\sigma>-2$ (where self-similar grow-up of solutions has been established in \cite{ILS25} with an algebraic grow-up rate) from the range $\sigma<-2$ (where uniform boundedness of solutions is established in the present paper), a natural conjecture in the critical case $\sigma=-2$ is that the solutions to Eq. \eqref{eq1} exhibit logarithmic grow-up as $t\to\infty$. Moreover, two of the authors proved in \cite{IS23} the existence of a unique unbounded and compactly supported self-similar solution to Eq. \eqref{eq0} with $\sigma=-2$ and dimension $N\geq3$ in the form
$$
u(x,t)=f(|x|t^{-1/2}), \quad f(\xi)\sim\left[-\frac{m-p}{m(N-2)}\ln\,\xi\right]^{1/(m-p)} \ {\rm as} \ \xi\to0.
$$ 
One may then expect that, similarly to the case $\sigma>-2$, this solution represents the asymptotic profile of general solutions to Eq. \eqref{eq1}, or at least provides the grow-up rate $(\ln\,t)^{1/(m-p)}$.

\medskip

The main difficulty in establishing grow-up rates in both critical cases (low dimensions and $\sigma=-2$) described at points 1 and 3 above, is the (apparent) lack of subsolutions with the expected behavior. Such subsolutions, if they exist, would clearly have to be constructed by coupling several pieces corresponding to the inner, transition and outer regions, as explained above. These problems will be addressed in a future work.

\bigskip

\noindent \textbf{Acknowledgements} The authors are partially supported by the Project PID2024-160967NB-I00 funded by AEI (Spain) and FEDER. M. L. is also partially supported by the project PID2022-136589NB-I00 funded by AEI (Spain) and FEDER.

\bigskip

\noindent \textbf{Data availability} Our manuscript has no associated data.

\bigskip

\noindent \textbf{Conflict of interest} The authors declare that there is no conflict of interest.

\bibliographystyle{plain}

\end{document}